\newtheorem{theorem}{Theorem}[section]
\newtheorem{definition}[theorem]{Definition}
\newtheorem{lemma}[theorem]{Lemma}
\newtheorem{proposition}[theorem]{Proposition}
\newtheorem{remark}[theorem]{Remark}
\numberwithin{equation}{section}
\theoremstyle{plain}
\newtheorem*{maintheorem}{Main Theorem}
\newcommand{\N}{\ensuremath{\mathbb{N}}}
\newcommand{\Z}{\ensuremath{\mathbb{Z}}}
\newcommand{\R}{\ensuremath{\mathbb{R}}}
\newcommand{\Q}{\ensuremath{\mathbb{Q}}}
\newcommand{\C}{\ensuremath{\mathbb{C}}}
\newcommand{\T}{\ensuremath{\mathbb{T}}}
\newcommand{\cH}{{\mathcal H}}
\newcommand{\cA}{{\mathcal A}}
\newcommand{\cC}{{\mathcal C}}
\newcommand{\cF}{{\mathcal F}}
\newcommand{\cP}{{\mathcal P}}
\newcommand{\cU}{{\mathcal U}}
\newcommand{\cV}{{\mathcal V}}
\newcommand{\cren}{{\mathcal R}_\text{cyl}}
\newcommand{\eps}{\epsilon}
\newcommand{\HH}{{\mathbb H}}
\newcommand{\CC}{{\mathbb C}}
\newcommand{\TT}{{\mathbb T}}
\newcommand{\ZZ}{{\mathbb Z}}
\newcommand{\RR}{{\mathbb R}}
\newcommand{\QQ}{{\mathbb Q}}
\newcommand{\NN}{{\mathbb N}}
\newcommand{\sm}{\setminus}
\newcommand{\ignore}[1]{}
\newcommand{\hol}{{\mathbf H}}
\newcommand{\dist}{\operatorname{dist}}
\newcommand{\diam}{\operatorname{diam}}
\newcommand{\cR}{{\mathcal R}}
\newcommand{\cL}{{\mathcal L}}
\newcommand{\cE}{{\mathcal E}}
\newcommand{\cQI}{{\mathcal{QI}}}
\newcommand{\cBT}{{\mathcal{BT}}}
\newcommand{\tl}{\tilde}
\newcommand{\bcu}{{\mathbf C}_{\mathcal U}}
\newcommand{\curr}{{\mathbf C}_{\mathcal U}^\RR}
\newcommand{\cu}{{\mathbf C}_{\mathcal U}^\RR}
\newcommand{\fxpt}{{\mathbf v}_*}
\newcommand{\bfv}{{\mathbf v}}
\title[Hyperbolicity of Renormalization for bi-cubic circle maps]{Hyperbolicity of Renormalization for bi-cubic circle maps with bounded combinatorics}
\author{Gabriela Estevez}
\address{Instituto de Matem\'atica e Estat\'istica, Universidade Federal Fluminense}
\curraddr{Rua Prof. Marcos Waldemar de Freitas Reis, S/N, 24.210-201, Bloco H, Campus do Gragoat\'a,
Niter\'oi, Rio de Janeiro RJ, Brasil}
\email{gestevez@id.uff.br}
\author{Michael Yampolsky}
\address{Department of Mathematics, University of Toronto}
\curraddr{40 St George Street, Toronto, Ontario, Canada}
\email{yampol@math.toronto.edu}
\subjclass[2010]{Primary 37E10; Secondary 37E20}
\thanks{G.E. was partially financed by the Coordenação de Aperfeiçoamento de Pessoal de Nível Superior - Brasil (CAPES) - Finance Code 001. M.Y. was partially supported by NSERC Discovery Grant.}
\keywords{Bi-critical circle maps; Renormalization operator; Hyperbolicity of renormalization; Bounded type rotation number}
\begin{document}

\begin{abstract} We construct a hyperbolic attractor of renormalization of bi-cubic circle maps with bounded combinatorics, with a codimension-two stable foliation.
\end{abstract}

\maketitle
\section{Introduction} Renormalization theory, which has revolutionized one-dimensional dynamics,
has developed around two key examples: unimodal maps of the interval, and maps of the circle with a single non-degenerate critical point (critical circle maps).
Renormalization techniques for critical circle maps first appeared in the works \cite{ORSS} and \cite{FKS}, and were further developed by O.E.~Lanford  \cite{Lan86}, who described a conjectural renormalization picture for such maps in full generality. This set of conjectures, known as Lanford's Program, became the subject of intensive study for several decades (see the historical account in \cite{Yam02}). A cornerstone of Lanford's Program is the existence of a horseshoe attractor for renormalization of critical circle maps with irrational rotation numbers. The symbolic dynamics on the attractor is described by (a compactification of) the natural extension of the action of the Gauss map $G(\rho)=\{1/\rho\}$ on the rotation numbers of the maps. In a suitable Banach manifold, the attractor is hyperbolic with a single unstable direction, which can be informally thought of as the direction of change of the rotation number of the map. The maps with the same irrational rotation number form stable manifolds of the horseshoe, and renormalization hyperbolicity promotes the topological equivalence of such maps to a smooth one with an appropriate degree of smoothness.

Lanford's Program was completed by the second author in \cite{Yam02,Yam03}. A natural direction of generalizing it is to consider maps with more than one critical point -- in which case, adding each new critical point would create an additional combinatorial invariant for the maps, which should result in an extra unstable direction for renormalization.

The problem of extending Lanford's Program is already very interesting, and conceptually difficult, for maps with two critical points.
In \cite{ESY}, D.~Smania and the two authors proved that, restricted to the set of bi-cubic maps whose 
rotation numbers are irrationals of a type bounded by some $B\in\NN$, renormalization has an attractor $\cA_B$.
However, the attractor $\cA_B$ contains maps with a single critical point of order $9$. These appear as limits of renormalizations of bi-cubic maps, for which the two critical orbits ``collide''. As was shown by the second author \cite{Yam03}, in the space of unicritical analytic circle maps, renormalization has only one unstable direction at these points. Near these maps, as critical points move (combinatorially) closer, the two unstable cone fields collapse into one. 
 The same problem would, of course, arise for any number of critical points greater than one.

In this paper, we overcome the difficulty by working in a different function space, in which the collapse does not happen, extending the definition of renormalization operator to this space, and then proving that the attractor {\sl is} uniformly hyperbolic for the new definition. As has been the second author's experience in the subject, the
correct choice of a function space and an appropriate definition of renormalization play a crucial role.\footnote{This was the case with the definition of cylinder renormalization in \cite{Yam02} to prove the hyperbolicity result for unicritical circle maps; or in the paper \cite{GY} in which renormalization had to be defined in {\it three} different function spaces.} The ideas which we develop in this paper is applicable to renormalization horseshoes for maps with arbitrary criticality, 
constructed by I.~Gorbovickis and the second author in \cite{GY21}.

\ignore{
Renormalization theory for analytic homeomorphisms of the circle with critical points has emerged as one of the central subjects in modern low-dimensional dynamics. It begin with the study of maps $f:\T\to\T$ with a single non-flat (usually cubic) critical point, known as {\it critical circle maps}, which was initiated in \cite{ORSS}. The most general form of the renormalization hyperbolicity conjecture for analytic critical circle maps was formulated by O.~Lanford~III
\cite{Lan86} and \cite{Lan88}, who described a hyperbolic action of the renormalization operator $\cR$ on such maps, with a one-dimensional unstable direction corresponding to the rotation number $\rho(f)$ of the map. The rotation number $\rho(\cR f)$ is obtained by applying the Gauss map to $\rho(f)$, which forgets the first digit in the continued fraction expansion of $\rho(f)$; a map with $\rho(f)=0$ is not renormalizable. The invariant set of $\cR$ in Lanford's conjecture is a horseshoe-type attractor with a one-dimensional unstable direction. This set of conjectures has become known as {\it Lanford's Program}. It was successfully settled by the second author in a series of papers, culminating in the works \cite{Yam02,Yam03} in which the hyperbolicity of the attractor was proven.

Since the completion of Lanford's Program, the focus has shifted to the study of maps with several critical points, the so-called {\it multi-critical circle maps}. The simplest examples of those are maps with two cubic critical points, known as {\it bi-cubic maps}. The bi-cubic case  captures some of the principal challenges in going from one to multiple critical points. 
In \cite{Yam2019} the second author showed that the orbits of the action of $\cR$ on bi-cubic maps whose rotation numbers are quadratic irrationals (and thus eventually periodic under the Gauss map) converge to periodic critical orbits of $\cR$. Furthermore, he demonstrated that these periodic orbits are hyperbolic with respect to a suitable Banach manifold structure on the space of bi-cubic maps and that these orbits possess not one, but two unstable directions.
The first one of these still corresponds to the change in the rotation number of the map. To understand where the second one comes from, recall that by the work of J.-C.~Yoccoz \cite{Yoc1984}, any bi-cubic map $f$ with an irrational rotation number is topologically conjugate to the rotation of $\T$ by angle $\rho(f)$. The pull-back of the Lebesgue measure by the conjugacy produces the unique absolutely continuous $f$-invariant measure $\mu_f$ on $\T$. The second unstable direction is then parametrized by the distance between the two critical points of $f$ measured with respect to $\mu_f$.
In a more recent work   \cite{ESY}, the authors together with D.~Smania, extended the convergence statement for the orbits of $\cR$ to bi-cubic maps whose rotation numbers have {\it bounded type} (this condition is equivalent to a finite bound on the digits of the continued fraction).
They showed, in particular, that if 
$f$ and $g$ are two bi-cubic  maps with same bounded type rotation number $\rho=\rho(f)=\rho(g)$, then
the Carath\'eodory distance between $\cR^nf$ and $\cR^ng$ converges to $0$ at a geometric rate, which depends only on the bound on $\rho$.

In this paper, we extend the hyperbolicity results of  \cite{Yam2019} to
prove that the attractor for bounded type constructed in \cite{ESY} is hyperbolic, with two unstable directions:
}

Our main result can be summarized as follows:
\begin{maintheorem}\label{maintheorem}
  Let $B\in\N$. There exists a suitable function space into which the space of bi-cubic maps considered in \cite{ESY} naturally embeds, in which:
  \begin{itemize}
  \item renormalization is a compact analytic operator;
  \item for each $B\in\NN$, the attractor for renormalization of bi-cubic maps of type bounded by $B$ is uniformly hyperbolic with  stable foliation of codimension two.
    \end{itemize}
\end{maintheorem}


\subsection{Organization of the paper}
In Section \ref{sec:preliminaries}, we give basic definitions which are used in the rest of the paper. In the following two sections, \ref{sec:realcomplexpairs} and \ref{section: cyl ren}, we review the theory of renormalization for commuting pairs (real and complex) and cylinder renormalization, respectively. In section \ref{sec:renortriples}, we  define renormalization in a suitable space of triples. Finally, in section \ref{sec-hyperb} we prove our Main Theorem in the space of triples. 

\subsection*{Aknowledgement.} The authors would like to thank I.~Gorbovickis for generously sharing his ideas and helping to fix an argument in an earlier version of the paper.

    Part of this article was developed while G.E. had a postdoctoral position at Instituto de Matem\'atica, Universidade Federal do Rio de Janeiro.

\section{Preliminaries}\label{sec:preliminaries}
\subsection{Bi-cubic circle maps}
When we talk about the circle, we will usually mean the affine manifold
$\T=\R/\Z$, which is canonically identified with the unit circle $S^1$ via the exponential map $\theta\mapsto \exp(2\pi i\theta)$.
We let $R_\rho(x)=x+\rho\mod \Z$ denote the rotation by an angle $\rho\in\T$.
Given a circle homeomorphism $f$,
we denote by $\rho(f)$ its rotation number.
A  bi-cubic  circle map is an orientation preserving circle homeomorphism of class $C^\omega$  with two critical points both of which are of cubic type.
To fix the ideas, we will place one of the two critical points at $0\in\T$. Note that a  $C^\omega$ map of $\T$ is a germ of a complex-analytic (and real-symmetric) map of
an annular neighborhood of $\T$ in $\C/\Z$ to $\C/\Z$.

Suppose, $f$ is a bi-cubic map with an irrational rotation number $\rho(f)\notin\Q$ whose critical points are $0$ and $c$. By a theorem of  Yoccoz \cite{Yoc1984}, there exists an orientation preserving homeomorphism $\phi:\T\to\T$ which is a
conjugacy between $f$ and a rigid rotation:
$$\phi\circ f\circ \phi^{-1}=R_{\rho(f)}.$$
We will say that the {\it signature} of $f$ is the pair $\cC(f)=(\rho(f),\delta)$, where $\delta\in(0,1)$ is the length of the arc of $\T$ going from $0$ to $\phi(c)$ in
positive (counterclockwise) direction. Clearly, $f$ is topologically conjugate to another bi-cubic map $g$ by an orientation preserving map fixing $0$, and sending the other critical point of $f$ to a critical point of $g$ if and only if $\cC(f)=\cC(g)$.


Examples of bi-cubic maps are given by a family of 
Blaschke products
\begin{equation}\label{Zakeri}
B(z)= e^{2 \pi i t} z^{3} \left(\frac{z-p}{1-\overline{p}z}\right)
\left(\frac{z-q}{1-\overline{q}z} \right),
\end{equation}
where $p,q \in \mathbb{C}$ with $|p|>1$, $|q|>1$ and $t \in [0,1]$. It was considered by Zakeri in \cite{Zak99}, who proved the following statements:

\begin{lemma}\label{Zakericubic}
  Given $\rho \in [0,1) \setminus \Q$ and $0<\delta<1$, there exists a unique map $B$ of type \eqref{Zakeri} such that the
signature $\cC(B)=(\rho,\delta)$.
\end{lemma}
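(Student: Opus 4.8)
\emph{Existence.} I would work inside the explicit family \eqref{Zakeri}. Fix $\rho\in[0,1)\sm\Q$ and $\delta\in(0,1)$. Conjugating by a rotation $z\mapsto e^{i\beta}z$, which preserves the family (acting by $(p,q,e^{2\pi it})\mapsto(e^{i\beta}p,e^{i\beta}q,e^{2\pi i t-4i\beta})$), we normalize so that one of the two critical points sits at $0\in\T$. The parameters $(p,q,t)$ for which $B$ is then a genuine bi-cubic circle homeomorphism can be described by the position $c\in(0,1)\subset\T$ of the second critical point together with the rotation parameter $t\in[0,1)$; since post-composing with $w\mapsto e^{2\pi it}w$ does not move critical points, $c$ and $t$ are independent coordinates. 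Because $B_{c,t+\eps}=R_\eps\circ B_{c,t}$ with $R_\eps(w)=e^{2\pi i\eps}w$, the function $t\mapsto\rho(B_{c,t})$ is a rotation number along a rotation-translation arc, hence a degree-one devil's staircase strictly increasing at every irrational value; so there is a unique $t=t(c,\rho)$ with $\rho(B_{c,t})=\rho$, depending continuously on $c$. With that $t$, let $\phi$ be Yoccoz's conjugacy of $B$ to $R_\rho$ with $\phi(0)=0$, and set $\delta(c):=\phi(c)$, the $\mu_B$-measure of the arc from $0$ to $c$. As a bi-cubic critical circle map, $B$ enjoys real a priori bounds, from which $c\mapsto\delta(c)$ is continuous; and as the critical points coalesce, $\delta(c)\to0$ when $c\to0^+$ and $\delta(c)\to1$ when $c\to1^-$. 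The intermediate value theorem then produces a $c$ with $\delta(c)=\delta$, and $B_{c,t(c,\rho)}$ has signature $(\rho,\delta)$.

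\emph{Uniqueness.} My first approach would be to prove the parameter map $(c,t)\mapsto(\rho(B_{c,t}),\delta(B_{c,t}))$ is injective. Strict monotonicity of the rotation-number staircase forces two parameters with the same irrational $\rho$ to have the same $t=t(c,\rho)$, so everything reduces to injectivity of $c\mapsto\delta(c)$, i.e.\ to showing that displacing the second critical point strictly changes its $\mu_B$-distance from the first. A more robust alternative is rigidity: if $\cC(B_1)=\cC(B_2)=(\rho,\delta)$ with $\rho$ irrational, then (by the discussion preceding Lemma~\ref{Zakericubic}) some orientation-preserving homeomorphism of $\T$ conjugates $B_1$ to $B_2$ respecting the two marked critical points; real a priori bounds make it quasisymmetric, so it extends quasiconformally to $\hat\C$, and a pull-back argument yields a $B_1$-invariant Beltrami differential. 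That differential is trivial provided the only periodic Fatou components of the maps \eqref{Zakeri} are the super-attracting basins of $0$ and $\infty$ --- the remaining two critical points being recurrent and confined to $\T$ --- and the Julia set carries no invariant line field; integrating then makes $B_1$ and $B_2$ Möbius-conjugate, the conjugacy fixes $\{0,\infty\}$ and preserves $\T$ hence is a rotation $z\mapsto\lambda z$, and matching the two signatures forces $\lambda=1$, i.e.\ $B_1=B_2$.

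\emph{Main obstacle.} The difficulty is not existence but the two ingredients that both amount to controlling a collision of the two critical points: the strict monotonicity of $c\mapsto\delta(c)$ --- equivalently, local injectivity of the parameter map --- or, in the alternative, the needed description of the global dynamics of the maps \eqref{Zakeri} on $\hat\C$; and, feeding back into existence, the boundary asymptotics $\delta(c)\to0,1$ as the critical points merge. This degeneration is exactly the one the body of the paper is built to handle.
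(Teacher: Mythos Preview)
The paper does not give its own proof of this lemma: it is quoted from Zakeri \cite{Zak99} and simply attributed to him in the sentence preceding it. So there is no in-text argument to match your proposal against; what follows is an assessment of your sketch on its own terms and against what Zakeri actually does.

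Your outline is broadly the right shape, and the rigidity alternative for uniqueness is essentially Zakeri's argument. But you have skipped the first substantial step: the claim that, after normalizing one critical point to $1\in S^1$, the bi-cubic locus in the family \eqref{Zakeri} is parametrized by $(c,t)$ is itself a theorem, not a bookkeeping remark. A priori $(p,q)\in(\CC\sm\overline{\mathbb D})^2$ is four real parameters, and the condition that $B|_{S^1}$ be a degree-one homeomorphism with exactly two cubic critical points forces the four non-trivial critical points of $B$ to lie on $S^1$ and to coalesce into two double ones; isolating this locus and showing it is a smooth two-real-parameter family is precisely the content of Zakeri's Propositions in \cite[\S 7]{Zak99}. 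Without this, your $(c,t)$ coordinates are not yet defined.

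The remaining obstacles you flag are genuine. For existence, the endpoint behavior $\delta(c)\to 0,1$ cannot be read off from real bounds alone: the bounds of \cite{EdF,EdFG} are not uniform as the two critical points collide, and the conjugacies $\phi$ may degenerate. For uniqueness via rigidity, extending the quasisymmetric circle conjugacy to a quasiconformal map of $\hat\CC$ is easy, but the extension is not equivariant away from $S^1$; you must either build an equivariant extension through the super-attracting basins of $0$ and $\infty$, or run the standard pull-back iteration and control the dilatation. The assertion that the Julia set carries no invariant line field then needs the global picture of $B$ on $\hat\CC$ (all free critical points lie on $S^1$ and are recurrent there), which Zakeri establishes. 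None of this is wrong in spirit, but each of these steps is where the actual work in \cite{Zak99} lives.
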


\begin{lemma}\label{Zakerimodel}
If a Blaschke product $G$ is topologically conjugate to an element of the family \eqref{Zakeri}, then $G=B \circ R$, where $B$ is of type \eqref{Zakeri} and $R(z) =e^{ir}z$, for $r \in \mathbb{R}$.
\end{lemma}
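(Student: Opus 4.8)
My plan is to push the analytic and combinatorial structure of $B$ across the conjugacy, bring $G$ into the normal form \eqref{Zakeri}, and then fix the two remaining parameters by a winding-number count. Throughout I read the topological conjugacy in the complex-dynamical sense: $G$ and $B$, being finite Blaschke products, are rational self-maps of $\widehat{\mathbb C}$, and the hypothesis provides a homeomorphism $h$ of $\widehat{\mathbb C}$ --- necessarily respecting the invariant circle $S^1$, since $S^1$ is dynamically intrinsic to $B$ --- with $h\circ G=B\circ h$.

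First I would transport structure. Topological conjugacy of branched self-coverings of the sphere preserves the degree and the local degrees at branch points, so $G$ is a Blaschke product of degree $5$ and $h$ identifies $\crit(G)$ with $\crit(B)$. The map $B$ has exactly four critical points: the super-attracting fixed points $0$ and $\infty$, each of local degree $3$, and the two cubic critical points of $B|_{S^1}$, again of local degree $3$; since a degree-$5$ rational map has $2\cdot5-2=8$ critical points with multiplicity and $2+2+2+2=8$, these exhaust $\crit(B)$, and the same configuration is forced for $G$. As $h$ respects $S^1$, the map $G|_{S^1}$ is an orientation-preserving circle homeomorphism with $\rho(G)=\rho(B)$, and $G$ has a super-attracting fixed point $z_0=h^{-1}(0)$ of local degree $3$ lying off $S^1$; by the Blaschke symmetry $G(1/\bar z)=1/\overline{G(z)}$ the other super-attracting fixed point is $1/\overline{z_0}$, and, replacing $h$ by $z\mapsto 1/\overline{h(z)}$ if necessary (still a conjugacy, by the analogous symmetry of $B$), I may assume $z_0\in\mathbb D$.

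Next comes the normalization. Conjugating $G$ by an $S^1$-preserving M\"obius map carrying $z_0$ to $0$ --- an operation determined only up to a rotation, which is the source of the factor $R$ in the statement (equivalently, the Blaschke model is normalized so that $0$ is the super-attracting fixed point) --- I may assume $z_0=0$. A super-attracting fixed point of local degree $3$ at the origin means $G$ has a triple zero there, so being a degree-$5$ Blaschke product it must have the form
\[
G(z)=e^{i\phi}\,z^{3}\,\frac{(z-a)(z-b)}{(1-\overline{a}z)(1-\overline{b}z)},\qquad a,b\in\mathbb C\setminus(S^1\cup\{0\}).
\]
Finally, the winding number of $G|_{S^1}$ pins down $a$ and $b$: the factor $z^{3}$ contributes $3$, and a single factor $\tfrac{z-c}{1-\overline{c}z}$ contributes $+1$ if $|c|<1$ and $-1$ if $|c|>1$, so since $G|_{S^1}$ is an orientation-preserving homeomorphism its degree is $1$, whence $3+\varepsilon(a)+\varepsilon(b)=1$ with $\varepsilon(\cdot)=\pm1$ and therefore $|a|,|b|>1$. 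Thus $G$ is of type \eqref{Zakeri}, and undoing the normalization gives $G=B\circ R$ with $B$ of type \eqref{Zakeri} and $R(z)=e^{ir}z$.

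I expect all of the real difficulty to lie in the first step. The normalization is bookkeeping and the last step is a one-line winding computation; the substance is, first, that the conjugacy can be taken to respect $S^1$ --- this should come from $S^1$ being dynamically intrinsic to $B$ (most cleanly, from $J(B)=S^1$, i.e.\ the Fatou set of $B$ being exactly the two super-attracting basins) and is the point at which one must be precise about what ``topologically conjugate'' means in the statement --- and, second, that the \emph{whole} branch configuration of $B$ (two super-attracting fixed points of local degree $3$ plus exactly two cubic critical points on $S^1$) is transported to $G$, which is what forces both the degree $5$ and the normal form. The Blaschke symmetry, the Fatou-set structure of $B$, and the Riemann--Hurwitz count are the ingredients that carry this out.
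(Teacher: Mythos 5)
The paper itself offers no proof of this lemma --- it is quoted from Zakeri \cite{Zak99} --- so I can only assess your argument on its own terms. Your endgame is certainly the algebraic heart of any proof: once $G$ is known to be a degree-$5$ Blaschke product with a triple zero at the origin, writing $G(z)=e^{i\phi}z^3\frac{(z-a)(z-b)}{(1-\bar a z)(1-\bar b z)}$ and observing that an orientation-preserving circle homeomorphism has winding number $1$, so that $3+\varepsilon(a)+\varepsilon(b)=1$ forces $|a|,|b|>1$, does put $G$ in the family \eqref{Zakeri}. The problem is the first part, where you transport the global branch data across the conjugacy; there are two genuine gaps there.

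First, your justification that the conjugacy respects $S^1$ rests on the claim $J(B)=S^1$, which is false. The disk $\mathbb{D}$ is not $B$-invariant: $B$ has poles at $1/\bar p,\,1/\bar q\in\mathbb{D}$, so the two superattracting basins are not the two components of $\widehat{\CC}\setminus S^1$, and $J(B)$ strictly contains $S^1$ (being completely invariant it contains all iterated preimages of $S^1$, of which four branches already leave $S^1$ at the first step). One can instead note that $S^1=\omega_B(c)$ for a circle critical point $c$, so $h^{-1}(S^1)$ is a $G$-invariant minimal Jordan curve through two critical points of $G$; but identifying that curve with the unit circle of $G$ still requires an argument you have not supplied. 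Second, and more decisively, the M\"obius normalization is not innocent: conjugating by a disk automorphism $M$ with $M(z_0)=0$ shows that $M\circ G\circ M^{-1}$ is of type \eqref{Zakeri}, not that $G$ itself is. Since every map $B\circ R$ has its triple zero at the origin, while $M\circ B\circ M^{-1}$ (for $M$ a non-rotation disk automorphism) is a degree-$5$ Blaschke product conformally conjugate to $B$ whose triple zero sits at $M(0)\neq 0$, the statement under your sphere-conjugacy reading is actually \emph{false} without an extra normalization pinning the order-$3$ zero of $G$ at the origin (or requiring the conjugacy to fix $0$ and $\infty$). So your instinct that everything hinges on the precise meaning of ``topologically conjugate'' is right, but the reading you adopt does not make the statement true, and the intended reading (conjugacy of the circle restrictions, with $G$ already presented as a degree-$5$ Blaschke product vanishing to order $3$ at $0$, as in Zakeri's normalized family) renders the sphere-dynamics machinery unnecessary: the winding-number computation is then essentially the whole proof.
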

 

\subsection{Continued fraction expansions}
For a finite or infinite continued fraction we will use the abbreviation
\begin{equation*}
           \cfrac{1}{a_{0}+\cfrac{1}{a_{1}+\cfrac{1}{ \ddots} }} \equiv \left[a_0,a_1,\ldots  \right]
\end{equation*}
 and in what follows we will always assume $a_j\in \N$.
Note that an irrational 
number in $(0,1)$ has a unique infinite continued fraction expansion; a continued fraction expansion of a rational number is
necessarily non-unique. This is due to the fact that $[n]=[n-1,1]$; we will always use the shorter expansion for a rational.
The Gauss map $$G(x)\equiv \left\{\frac{1}{x}\right\}$$
acts as a forgetful map on the continued fraction expansions of numbers in $(0,1)$:
$$G([a_0,a_1,a_2,\ldots])=[a_1,a_2,\ldots].$$

Suppose that  the continued fraction expansion of a number 
$\rho=[a_0,a_1,\ldots]$ has at least $m$ terms; its {\it $k$-th continued fraction convergent} for $k\leq m$ is the rational number
$$\frac{p_k}{q_k}\equiv [a_0,a_1, \cdots ,a_{k-1}].$$
Recall, that the denominators 
$\{q_k\}_{k \in \N}$ satisfiy the following recursive formula:
\begin{equation}
  \label{qk-recursion}
 q_0=1, \hspace{0.3cm} q_1=a_0 \hspace{0.3cm} \text{and} \hspace{0.3cm} q_{k+2}=a_{k+1}q_{k+1} +q_{k} \hspace{0.3cm} \text{for all $k\geq 1$.}
\end{equation}

If $\rho=\rho(f)\in(0,1)$ is the rotation number of a circle homeomorphism $f$, then $f^{q_k}(x)$ is a {\it closest return} of a point $x\in \T$ in the following sense.
Suppose $q_{k+1}$ is defined (that is, the continued fraction expansion of $\rho$ has at least $k+1$ terms). Denote $I_k(x)$ the arc of $\T$  whose endpoints are $x$, $f^{q_k}(x)$
and such that $f^{q_{k+1}}(x)\notin I_k(x)$. Then for all $j<q_k$, the iterate $f^j(x)\notin I_k(x)$.

\begin{figure}[h]
  \includegraphics[width=0.8\textwidth]{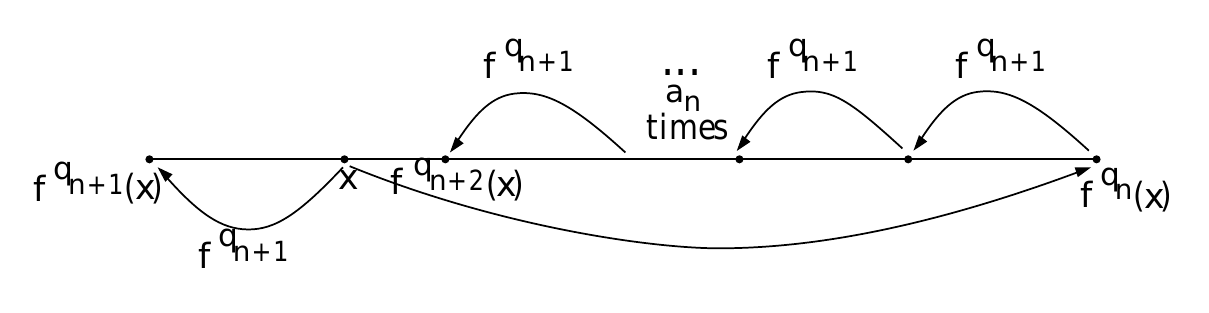}
\caption{\label{fig-returns} Closest returns of a point $x\in\T$}
  \end{figure}

If we denote $t_\ell$ the iterates $q_k+\ell q_{k+1}$, for $0\leq \ell\leq a_{k+1}$ (compare with (\ref{qk-recursion}) then $f^{t_\ell}(x)\in I_k(x)$ and all of these iterates are closest returns in the above sense: there is no smaller iterate of $x$ in the subinterval of $I_k(x)$ which is bounded by $x$ and $f^{t_\ell}(x)$. In fact, all of the closest returns of a point $x\in\T$  are of this form, see Figure~\ref{fig-returns}.

It is not difficult to see, that
the collection of intervals
\[
 \mathcal{P}_k(x) \ = \ \left\{ f^{i}(I_k(x)):\;0\leq i\leq q_{k+1}-1 \right\} \;\bigcup\; 
 \left\{ f^{j}(I_{k+1}(x)):\;0\leq j\leq q_{k}-1 \right\} 
\]
is a partition of the circle by closed intervals intersecting only at their endpoints. It is called the {\it dynamical partition of level (or depth) $k$\/} corresponding to the point $x$ (see \cite[Section 1.1, Lemma 1.3, page 26]{dMvS} or \cite[Appendix]{EdFG}).

For notational convenience, we will always choose $x=0$ in the above, and will write $I_k\equiv I_k(0)$ and $\mathcal{P}_k\equiv\mathcal{P}_k(0)$. The latter will  be referred to as the dynamical partition of level (or depth) $k$.

Let us denote $\cQI$ the set of quadratic irrationals in $(0,1)$, that is, the set of numbers whose continued fraction expansion is eventually periodic. In other words, $\rho\in\cQI$ iff there exist $k\geq 0$ and $m\in\N$ such that $G^{k}(\rho)=G^{k+m}(\rho)$.
For $B\in\N$ we will say that $\rho\in(0,1)\setminus \Q$ is of a {\it type bounded by } $B$ if the coefficients in its continued fraction expansion are
bounded by $B$. We denote the set of such numbers by $\cBT_B$. We also set
$$\cBT=\cup_B\cBT_B$$
and call the numbers in $\cBT$ irrationals of {\it bounded type}. A number $\rho$ is of bounded type iff it satisfies the Diophantine condition
with exponent $2$.

\section{Real and complex commuting pairs}
\label{sec:realcomplexpairs}

\subsection{Renormalization of bi-critical commuting pairs}\label{sec:renorcp} 
Renormalization of multicritical circle maps is defined in the language of {\it commuting pairs} which originated 
in \cite{ORSS}. Here we briefly recall the main definitions.

For $a\neq b\in\R$ let us denote $[a,b]$, $(a,b)$ the closed and open intervals  with boundary points $a$, $b$ regardless of the orientation.
\begin{definition}\label{multcommpairs} A multicritical commuting pair  $\zeta=(\eta,\xi)$ consists of two  orientation preserving interval homeomorphisms $\xi:I_\xi \rightarrow  \xi(I_\xi)$ and $\eta : I_\eta \rightarrow \eta(I_{\eta})$ of class $C^\omega$
satisfying:
 \begin{enumerate}
  \item $(\eta(0),\xi(0))\ni 0$, and $I_{\xi}=[\eta(0), 0]$, $I_{\eta}=[0, \xi(0)]$;
  \item the origin is a cubic critical point both for $\eta$ and for $\xi$;
  \item $\xi$ and $\eta$ satisfy the commutation property  $\eta \circ \xi=\xi \circ \eta$ where defined;
    \item $\xi(\eta(0))=\eta(\xi(0))\in I_\eta$.
 \end{enumerate}
\end{definition}
A multicritical circle map can be associated to a multicritical commuting pair $\zeta=(\eta|I_\eta,\xi|I_\xi)$ as follows. Let $x\in I_\xi$ be
any regular point of $\xi$, and glue the endpoints of the interval $I=[x,\xi(x)]$ via $\xi$. More precisely, consider the one-dimensional analytic manifold $S$, obtained by using $\xi$ as a local chart in a small neighborhood of $x$.
Consider the map $f\colon I\to I$ defined as follows:
$$
f(x) = \begin{cases}
\eta(x) &\text{if } x\in I_\eta \text{ and }\eta(x)\in I \\
\xi(\eta(x)) &\text{if } x\in I_\eta \text{ and }\eta(x)\not\in I \\
\eta(\xi(x)) &\text{if } x\in I_\xi.
\end{cases}
$$
Let $\tilde f\colon S\to S$ be the projection of $f$ to a map of the analytic circle $S$, and let $h$ be a diffeomorphism of the same smoothness as $\zeta$, which maps $S\to \T$ and sends $0\in S$ to $0\in\T$. It is straightforward to check that 
the map $\tilde f$ projects to a well-defined multicritical circle map:
\begin{equation}
  \label{eq:fzeta}
f_\zeta=h\circ \tilde f\circ h^{-1}\colon\T\to\T
\end{equation}
via  $h$.
Of course, $f_\zeta$ is not defined uniquely, since there is {\it a priori} no canonical way of identifying the manifold $S$ with the circle $\T$. Rather, we recover a conjugacy class of multicritical cicle maps from a pair $\zeta$ from this gluing construction.

We say that a pair $\zeta$ is {\it bi-cubic} if $f_\zeta$ is a bi-cubic map (see Figure~\ref{fig:bcp}).
The rotation number $\rho(\zeta)$ of a commuting pair is defined as
$$\rho(\zeta)=\rho(f_\zeta),$$
note that the latter is well-defined, since $f_\zeta$ is defined up to a conjugacy. If $\rho(\zeta)\notin \Q$, the signature $\cC(\zeta)$ is defined as
$$\cC(\zeta)=\cC(f_\zeta);$$
it is similarly well-defined.

Let $f$ be a  multicritical circle map with a rotation number $\rho=\rho(f)$ whose continued fraction expansion has at least $m+1$ terms, and as usual, denote $p_n/q_n$ the continued fraction convergents of $\rho$, defined for $n\leq m+1$.
Let $\widehat{f}$ be the lift of $f$ such that $0< \widehat{f}(0)<1$ (note that $D\widehat{f}(0)=0$). For $1\leq n\leq m$, let $\widehat{I}_{n}$ be the closed interval in $\R$, containing the origin as one of its boundary points, which is projected onto $I_{n}(0)\subset \T=\R/\Z$.
We define $\xi: \widehat{I}_{n+1}\rightarrow \R$ and $\eta: \widehat{I}_{n} \rightarrow \R$ by $$\xi= T^{-p_{n}}\circ \widehat{f}^{q_{n}}\text{ and }\eta= T^{-p_{n+1}}\circ \widehat{f}^{q_{n+1}},$$ where $T(x)=x+1$ is the unit translation. Then the pair
\begin{equation}
  \label{eq:zetafn}
  \zeta_{f,n}=(\eta|_{\widehat{I}_{n}(c_j)}, \xi|_{\widehat{I}_{n+1}(c_j)})
\end{equation}
is a multicritical commuting pair. In what follows, for the simplicity of the notation, we will  denote this pair simply by $(f^{q_{n+1}}|_{I_n}, f^{q_n}|_{I_{n+1}})$.

Suppose now that  $f$ is bi-cubic, then the pair $\zeta_{f,n}$ corresponds to a multicritical circle map $f_{\zeta_{f,n}}$ (\ref{eq:fzeta}) which has either two cubic critical points, or a single critical point of order $9$. The latter happens if the orbit of one of the critical points of $f$ hits the other one for an iterate $m\leq q_{n+1}$.

\begin{figure}[!ht]
\includegraphics[width=0.6\textwidth]{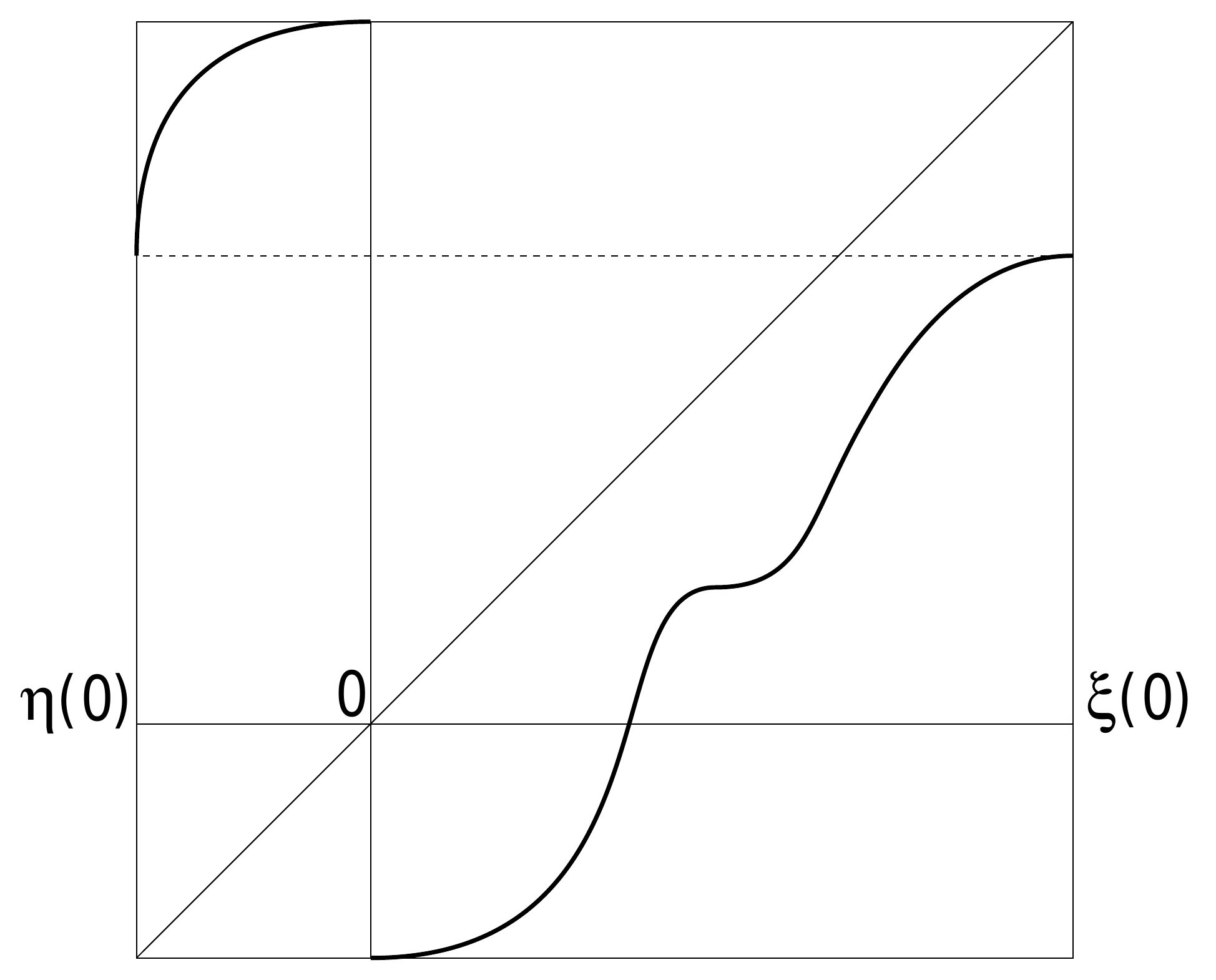}
  \caption{\label{fig:bcp} A bi-cubic commuting pair.}
\end{figure}

For any given commuting pair $\zeta=(\eta,\xi)$ we denote by $\widetilde{\zeta}$ the pair $(\widetilde{\eta}|_{\widetilde{I_{\eta}}}, \widetilde{\xi}|_{\widetilde{I_{\xi}}})$, where tilde means linear rescaling by the factor $1/\xi(0)$. 
Thus, $\widetilde{\xi}(0)=1$ and $\widetilde{\eta}(0)=-|I_{\xi}|/|I_{\eta}|$. We call $\widetilde{\zeta}$ a {\it normalized} commuting pair.
\begin{definition} The \emph{height} $\chi(\zeta)$ of a commuting pair $\zeta=(\eta, \xi)$ is equal to the natural number $a$ such that
  $$ (\eta^{a+1}(\xi(0)),\eta^{a}(\xi(0)))\ni 0,$$
  if it exists.
  When no such number exists, we set $\chi(\zeta)=\infty$.
\end{definition}
Note that, when it is finite,  the height of the commuting pair
$(f^{q_{n+1}}|_{I_n}, f^{q_n}|_{I_{n+1}})$ is equal to the coefficient $a_{n+1}$ in the continued fraction expansion of $\rho(f)$, and 
$\chi(\zeta)=\infty$ if and only if $\eta$ has a fixed point in its domain of definition.

\begin{definition}\label{defren} 
Let $\zeta=(\eta, \xi)$ be a commuting pair with $\chi(\zeta)=a< \infty$. We define the \emph{pre-renormalization} of $\zeta$ as the pair$$p\mathcal{R}(\zeta) =(\eta|_{[0, \eta^{a}(\xi(0)) ]} \ , \ \eta^{a}\circ \xi|_{I_{\xi}} ).$$
Moreover, we define the \emph{renormalization} of $\zeta$ as the normalization of $p \mathcal{R}(\zeta)$:$$\mathcal{R}(\zeta)= \left(\widetilde{\eta}|_{[0,\widetilde{\eta^{a}(\xi(0))} ]} \ , \ \widetilde{\eta^{a}\circ \xi}|_{\widetilde{I}_{\xi}}
  \right).$$
\end{definition}

If $\zeta$ is a bi-cubic commuting pair with $\chi(\mathcal{R}^{j}\zeta) < \infty$ for $0 \leq j \leq n-1$, we say that $\zeta$ is \textit{$n$-times renormalizable}. Moreover, if $\chi(\mathcal{R}^{j}\zeta)< \infty$  for all $j \in \N$, we say that $\zeta$ is \textit{infinitely renormalizable}.
It is straightforward to check that
the rotation number $\rho(\zeta)$ is equal to a finite or infinite continued fraction
$$\rho(\zeta)=[\chi(\zeta), \chi(\mathcal{R}\zeta), \cdots, \chi(\mathcal{R}^{n}\zeta), \cdots ],$$
and that renormalization acts as the Gauss map $G(\rho)$ on the rotation numbers:
$$\rho(\cR\zeta)=G(\rho(\zeta)).$$

The $n$-th renormalization $\cR^nf$ of  a bi-cubic circle map $f$
is the commuting pair
$$\mathcal{R}^{n}f=\left(\widetilde{f}^{q_{n+1}}|_{\widetilde{I_n}}, \widetilde{f}^{q_{n}}|_{\widetilde{I_{n+1}}}\right).$$
This definition is clearly consistent with the above definition of renormalization of commuting pairs:
$$\cR(\cR^nf)=\cR^{n+1}f,$$
and
$$\rho(\cR f)=G^2(\rho(f)).$$
The action of renormalization on signatures is given by the expanding cocycle
\begin{equation}
  \label{eq:cocycle}
  (\rho,\delta)\mapsto \left( G(\rho),\left\|\frac{\delta}{\rho} \right\|_{\RR/\ZZ}\right).
\end{equation}
(compare with \cite[Section 3]{dFG} where the corresponding cocycle is constructed for renormalization acting on bi-critical commuting pairs).

Compactness properties of infinite orbits of a renormalization transformation are known as {\it a priori} bounds; such bounds allow us to discuss attractors of renormalization, and play a crucial role in establishing rigidity results. Real {\it a priori} bounds 
for an infinitely renormalizable bi-cubic map concern pre-compactness of renormalizations in the $C^r$ topology.
Real {\it a priori} bounds were proved for multicritical circle maps  with an arbitrary irrational rotation number in \cite{EdF}. These bounds were improved in \cite{EdFG} to be universal (independent of the map). We give a precise statement for bi-cubic circle maps:
\begin{theorem}\label{th:realbds}
 There exists $n_0 \in \NN$ such that for any $n\geq n_0$ and any bi-cubic circle map $f$ with an irrational rotation number, the sequence $\{f^{q_{n+1}}|_{I_n} \}$ is universally bounded in the $C^1$ norm.
\end{theorem}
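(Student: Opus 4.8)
The plan is to deduce the $C^1$ bound from control of the \emph{geometry of the dynamical partitions} $\mathcal{P}_n$, together with the cubic normal form of $f$ near its critical points. The basic tool is the cross-ratio distortion estimate (Koebe-type principle) for smooth maps with non-flat critical points: if $g$ is an iterate of $f$ that is a diffeomorphism on an interval $T\supset M$, then the distortion of $g|_M$ is bounded in terms of the cross-ratio $(M;T)$ up to a multiplicative error controlled by $\sum|g^j(T)|$ over the intermediate branches. Since $f^{q_{n+1}}|_{I_n}$ decomposes into branches whose images $I_n, f(I_n),\ldots, f^{q_{n+1}-1}(I_n)$ are pairwise disjoint, being atoms (or unions of atoms) of $\mathcal{P}_n$, the total length of these images is at most $1$, so the cross-ratio tools apply with a \emph{universal} error.

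The heart of the matter is to establish that consecutive atoms of $\mathcal{P}_n$ are comparable with constants independent of $f$ for $n\ge n_0$ --- these are the \emph{real a priori bounds}. Here I would follow the Świątek--Herman strategy adapted to the multicritical setting: the return branch $\eta=f^{q_{n+1}}|_{I_n}$ carries a critical point, and its power $\eta^{a_{n+1}}$ behaves like the iteration of an almost-parabolic map; a Yoccoz-type inequality then yields a definite gain of geometry under renormalization. One shows that bounded geometry at the critical point --- which follows from the cubic normal form, $\eta$ looking like $x\mapsto x^3$ pre- and post-composed with uniformly comparable affine-like maps --- propagates to all atoms via the cross-ratio inequalities, producing a universal $C$ with $C^{-1}\le|\Delta_1|/|\Delta_2|\le C$ for adjacent atoms $\Delta_1,\Delta_2$ of $\mathcal{P}_n$, $n\ge n_0$. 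For the \emph{universality} (independence of $f$) one invokes the refined ``beau bounds'' argument of \cite{EdFG}: since two infinitely renormalizable maps with the same rotation number are asymptotically close, the constants can be taken uniform over all maps once $n_0$ is large.

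Given the real bounds, the $C^1$ estimate follows by a now-routine decomposition. Write $f^{q_{n+1}}|_{I_n}$ as $D_2\circ(\text{cubic branch})\circ D_1$, where $D_1,D_2$ are compositions of diffeomorphic branches of $f$. By the cross-ratio/Koebe principle and the disjointness of the branch images, $D_1$ and $D_2$ have bounded distortion, and by the real bounds their domains and images have lengths comparable to $|I_n|$, $|I_{n+1}|$; hence after rescaling by $1/\xi(0)$ their derivatives are uniformly bounded above and below. The cubic branch, in suitable coordinates, is exactly $x\mapsto x^3$ up to a uniformly bounded diffeomorphic change of variables, so its rescaled version is likewise uniformly $C^1$-bounded. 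Composing, $\widetilde f^{q_{n+1}}|_{\widetilde{I_n}}$ is bounded in $C^1$ by a universal constant.

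The main obstacle is the second step --- the real bounds with \emph{universal} constants. For bounded combinatorics this is classical, but covering arbitrary irrational rotation numbers forces one to handle unbounded continued-fraction coefficients, where $\eta^{a_{n+1}}$ is a long iteration near a fixed-point-like configuration; controlling such almost-parabolic towers, and making the resulting bounds independent of $f$, is exactly the content of \cite{EdF} and its universal refinement \cite{EdFG}. A secondary subtlety specific to the bi-cubic case is that the two critical orbits may nearly collide, so the renormalized pair degenerates toward a single critical point of order $9$; one must check that the cross-ratio estimates and the normal-form argument still apply uniformly near this degeneration, which is handled by treating the order-$9$ case as a (non-generic) limit within the same class of non-flat maps.
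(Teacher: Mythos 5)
The paper does not prove this statement at all: it is quoted verbatim from the literature, with the general (non-universal) bounds attributed to \cite{EdF} and their universal refinement to \cite{EdFG}, so your sketch — cross-ratio/Koebe distortion over the disjoint images $f^i(I_n)$, real bounds via Yoccoz-type control of the almost-parabolic returns for unbounded coefficients, and universality via the beau-bounds argument — is exactly the strategy of the cited sources and defers the hard steps to the same places the paper does. The only imprecision worth flagging is that in the bi-cubic case the decomposition of $f^{q_{n+1}}|_{I_n}$ requires \emph{two} cubic branches (one for each critical point visited by the orbit of $I_n$), i.e.\ $D_3\circ q_c\circ D_2\circ q_0\circ D_1$ rather than a single cubic factor, but this does not affect the argument.
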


Complex {\it a priori} bounds concern convergence of analytic extensions of renormalizations in the complex plane, and will require a detailed discussion.

\subsection{Complex {\it a priori} bounds and convergence of renormalization} 
Let us recall the definition of a holomorphic commuting pair given in \cite{Yam2019}, which generalizes the original definition of de~Faria \cite{dF}. 

\begin{definition}\label{def:hcp}
 Given an analytic multicritical commuting pair $\zeta=(\eta|_{I_{\eta}}, \xi_{I_{\xi}})$, we say that it extends to a \emph{holomorphic commuting pair} $\mathcal{H}$, if there exist three simply-connected and $\R-$symmetric domains $D,U,V\subseteq \C$, whose intersections with the real line are denoted by $I_U=U \cap \R$, $I_V=V \cap \R$ and $I_D=D \cap \R$ and a simply connected $\R-$symmetric Jordan domain $\Delta$ that satisfy the following,
\begin{enumerate}
 \item the endpoints of $I_U$ and $I_V$ are critical points of $\eta$ and $\xi$, respectively;
 \item  $\overline{D}, \overline{U}, \overline{V}$ are contained in $\Delta$; $\overline{U}\cap \overline{V} = \{0\} \subseteq D$; the sets $U \setminus D, V\setminus D, D \setminus U$ and $D \setminus V$ are non-empty, connected and simply-connected; $I_{\eta} \subset I_U \cup \{0\}$, $I_{\xi}\subset I_V \cup \{0\}$;
 \item $U \cap \mathbb{H}$, $V \cap \mathbb{H}$ and  $D \cap \mathbb{H}$ are Jordan domains;
 \item the maps $\eta$ and $\xi$ have analytic extensions to $U$ and $V$, respectively, so that $\eta$ is a branched covering map of $U$ onto $(\Delta \setminus \R) \cup \eta(I_U)$, and $\xi$ is a branched covering map of $V$ onto $(\Delta \setminus \R)\cup \xi(I_V)$,
   with all the critical values of both maps contained in the real line;
 \item the maps $\eta:U \to \Delta$ and $\xi: V \to \Delta$ can be analytically extended to branch covering maps
   $$\hat\eta:\hat U\to(\Delta \setminus \R) \cup \eta(I_U)\text{ and }\hat \xi:\hat V\to (\Delta \setminus \R)\cup \xi(I_V),$$
   with only real critical values, and $\Delta\Supset \hat U\ni 0$, $\Delta\Supset \hat V\ni 0$;
 \item the domains $\hat U\cap\hat V\supset D$ and 
   the map $\nu=\widehat{\eta}\circ \widehat{\xi}= \widehat{\xi}\circ \widehat{\eta}$ is defined in $D$ and is a branched covering of $D$ onto $(\Delta\setminus \R)\cup \nu(I_D)$ with only real critical values.
\end{enumerate}
\end{definition}

\begin{figure}[!ht]
\includegraphics[width=0.7\textwidth]{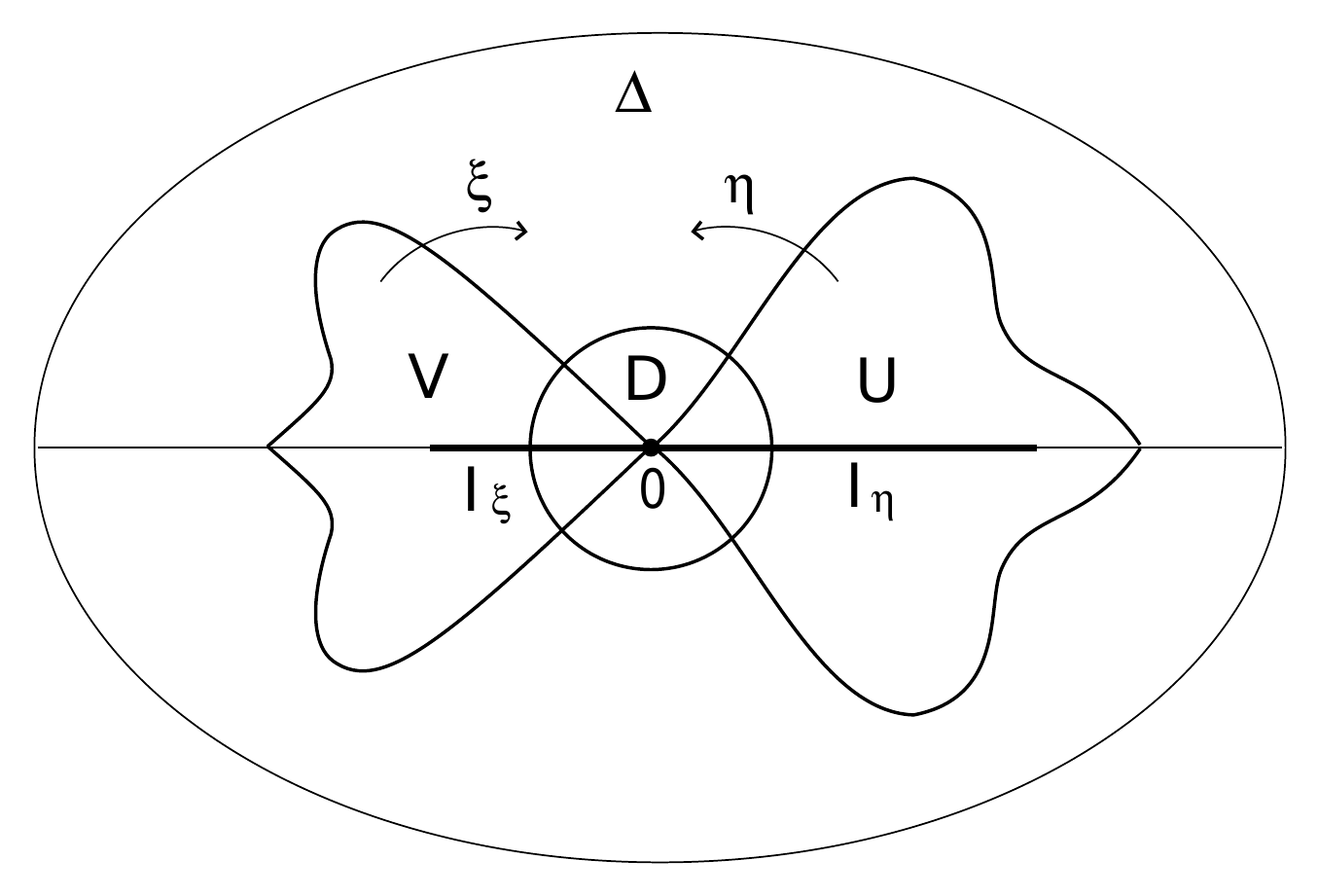}
  \caption{\label{fig:hcp} A holomorphic commuting pair.}
\end{figure}

\noindent
We shall  call $\zeta$ the {\it commuting pair underlying $\cH$}, and write $\zeta\equiv \zeta_\cH$. When no confusion is possible, we will use the same letters $\eta$ and $\xi$ to denote both the maps of the commuting pair $\zeta_\cH$ and their analytic extensions to the corresponding domains $U$ and $V$.



We can naturally view a holomorphic pair $\cH$ as three triples
$$(U,\xi(0),\eta),\;(V,\eta(0),\xi),\;(D,0,\nu).$$
We say that a sequence of holomorphic pairs converges in the sense of Carath{\'e}odory convergence, if the corresponding triples do.
We denote the space of triples equipped with this notion of convergence by $\hol$.

We let the \emph{ modulus} of a holomorphic commuting pair $\cH$, which we denote by $\mod(\cH)$ to be the modulus of the largest annulus $A\subset \Delta$,
which separates $\CC\setminus\Delta$ from $\overline\Omega$.

\begin{definition}\label{H_mu_def}
For $\mu\in(0,1)$ let $\hol(\mu)\subset\hol$ denote the space of holomorphic commuting pairs
${\cH}:\Omega_{{\cH}}\to \Delta_{\cH}$, with the following properties:
\begin{enumerate}
\item $\mod (\cH)\ge\mu$;
\item {$|I_\eta|=1$, $|I_\xi|\ge\mu$} and $|\eta^{-1}(0)|\ge\mu$; 
\item $\dist(\eta(0),\partial V_\cH)/\diam V_\cH\ge\mu$ and $\dist(\xi(0),\partial U_\cH)/\diam U_\cH\ge\mu$;
\item {the domains $\Delta_\cH$, $U_\cH\cap\HH$, $V_\cH\cap\HH$ and $D_\cH\cap\HH$ are $(1/\mu)$-quasidisks.}
\item$\diam(\Delta_{\cH})\le 1/\mu$;
\end{enumerate}
\end{definition}

Let the {\it degree} of a holomorphic pair $\cH$ denote the maximal topological degree of the covering maps constituting the pair. Denote by $\hol^K(\mu)$ the subset of $\hol(\mu)$ consisting of pairs whose degree is bounded by $K$. The following is an easy generalization of Lemma 2.17 of \cite{Yam03}:  

\begin{lemma}
\label{bounds compactness}
For each $K\geq 3$ and $\mu\in(0,1)$ the space $\hol^K(\mu)$ is sequentially compact.
\end{lemma}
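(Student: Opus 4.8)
The plan is to reduce the statement to a standard normal-families / Carathéodory-compactness argument for the three triples that constitute a holomorphic commuting pair, using the uniform geometric bounds packaged in Definition \ref{H_mu_def} together with the degree bound $K$. First I would fix a sequence $\cH_n\in\hol^K(\mu)$, with underlying data $D_n,U_n,V_n,\Delta_n$ and maps $\eta_n,\xi_n,\nu_n$, and normalize so that $|I_{\eta_n}|=1$ with, say, the left endpoint of $I_{\eta_n}$ at the origin; conditions (2) and (5) then confine all the real traces $I_{U_n},I_{V_n},I_{D_n}$ and the points $\eta_n(0),\xi_n(0),\eta_n^{-1}(0)$ to a fixed compact subset of $\R$, and all of $\Delta_n,U_n,V_n,D_n$ to a fixed compact subset of $\C$. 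Passing to a subsequence, I would extract Carathéodory limits of the four nested domains. The key point here is that condition (4) forces $\Delta_n$ and $U_n\cap\HH$, $V_n\cap\HH$, $D_n\cap\HH$ to be uniform quasidisks, so their Carathéodory kernels are again quasidisks (in particular genuine, non-degenerate simply connected domains, with locally connected boundary), and conditions (1)--(3) guarantee that these kernels do not collapse: $\mod(\cH_n)\ge\mu$ keeps a definite annulus inside $\Delta_\infty$ separating $\partial\Delta_\infty$ from the limiting $\overline\Omega$, and the relative-distance bounds in (3) keep $\eta_\infty(0)$ and $\xi_\infty(0)$ interior to $V_\infty$ and $U_\infty$ respectively, while (2) keeps $|I_{\eta_\infty}|=1$.

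Next I would handle the maps. Each $\eta_n:U_n\to\Delta_n$ (and similarly $\xi_n:V_n\to\Delta_n$, $\nu_n:D_n\to\Delta_n$) is a branched cover of bounded degree $\le K$ between uniformly bounded domains, hence a uniformly bounded family of holomorphic functions on the fixed compact in which all $U_n$ lie; by Montel's theorem and a diagonal argument I can pass to a further subsequence along which $\eta_n\to\eta_\infty$, $\xi_n\to\xi_\infty$, $\nu_n\to\nu_\infty$ uniformly on compact subsets of $U_\infty$, $V_\infty$, $D_\infty$ respectively (this is exactly the content of Carathéodory convergence of the corresponding triples, as defined in the excerpt, so $\cH_n\to\cH_\infty$ in $\hol$). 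Then I would verify that the limit $\cH_\infty$ is again a holomorphic commuting pair in $\hol^K(\mu)$: the commutation relation $\eta\circ\xi=\xi\circ\eta$ and $\nu=\eta\circ\xi$ pass to the limit by uniform convergence; the branched-covering structure of each map onto $(\Delta_\infty\setminus\R)\cup(\text{image of the real trace})$ with only real critical values is preserved because the degree is locally constant along a uniformly convergent sequence of proper maps of the same bounded degree (Hurwitz / argument principle), and real-symmetry together with the reality of critical values is a closed condition; finally each inequality (1)--(5) is non-strict, hence survives the limit (for (1) one uses that $\mod$ is upper semicontinuous under Carathéodory convergence, combined with the definite separating annulus; for (4) that a Carathéodory limit of uniform $k$-quasidisks is a $k$-quasidisk, a standard fact about the persistence of the quasidisk condition under Hausdorff/Carathéodory limits).

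The main obstacle I expect is \emph{non-degeneracy of the limiting domains and of the covering structure} — i.e.\ ruling out that $U_\infty$, $V_\infty$, or $D_\infty$ degenerates to a slit, a point, or a disconnected set, and that the limiting maps $\eta_\infty,\xi_\infty,\nu_\infty$ remain genuine branched covers of the stated degree rather than dropping degree or becoming constant on a component. This is precisely where conditions (1)--(4) of Definition \ref{H_mu_def} must be used in an essential, quantitative way (the quasidisk bound controls the shape of the domains and the modulus bound controls their separation), and where the degree bound $K$ is indispensable — without it Montel still gives a limit but the covering degree could blow up. Once these non-degeneracy facts are in hand, checking items (1)--(6) of Definition \ref{def:hcp} for $\cH_\infty$ is routine, and sequential compactness of $\hol^K(\mu)$ follows. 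As noted in the excerpt, this is the expected generalization of \cite[Lemma 2.17]{Yam03} from the unicritical to the bi-critical setting, so I would organize the write-up to parallel that argument, indicating only the places where the extra (second) critical point and the auxiliary map $\nu$ require small modifications.
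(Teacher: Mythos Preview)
Your proposal is correct and aligns with the paper's treatment: the paper gives no proof at all, simply stating that the lemma is ``an easy generalization of Lemma 2.17 of \cite{Yam03}''. The Carath\'eodory/normal-families argument you outline, using the quasidisk and modulus bounds of Definition~\ref{H_mu_def} to prevent degeneration and the degree bound $K$ plus Hurwitz to preserve the branched-covering structure, is exactly that argument, adapted to the extra map $\nu$.
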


\noindent
We say that a real commuting pair $\zeta=(\eta,\xi)$ with
an irrational rotation number has
{\it complex {\rm a priori} bounds}, if there exists $\mu>0$ such that all renormalizations of $\zeta=(\eta,\xi)$ extend to 
holomorphic commuting pairs in $\hol(\mu)$.
The existense of complex {\it a priori} bounds is a key analytic issue 
 of renormalization theory. 
 For holomorphic commuting pairs corresponding to bi-cubic circle maps, with bounded type rotation number, complex a priori bounds were established \cite{ESY}:
 \begin{theorem}
   \label{th:bounds}
Let $\zeta$ be a bi-cubic analytic commuting pair with $\rho(\zeta)\in\cBT_B$. Then $\zeta$ has complex {\it a priori} bounds, with $\mu=\mu(B)$.
   \end{theorem}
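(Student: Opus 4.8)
\textbf{Proof proposal for Theorem \ref{th:bounds}.}

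The plan is to reduce the bi-cubic case to the already-known real and complex a priori bounds for unicritical (critical) circle maps, treating the commuting pair $\zeta=(\eta,\xi)$ as a perturbation of (or a pair built from) two critical-circle-map renormalization pictures, one attached to each critical point. First I would invoke the real a priori bounds, Theorem \ref{th:realbds}, so that, after finitely many renormalizations, the real pairs $\cR^n\zeta$ lie in a compact family in the $C^1$ (indeed $C^r$) topology, with universally controlled geometry of the dynamical partitions $\cP_k$; the bound $\rho(\zeta)\in\cBT_B$ guarantees that the combinatorial lengths $a_n$ are $\le B$, so that each renormalization step is a composition of at most $B$ maps from this compact family, and all the real commensurability ratios ("real bounds": adjacent intervals in $\cP_k$ have comparable lengths, with constants depending only on $B$) are uniform.

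Next I would construct the complex extensions. The heart of the matter is to produce, for every sufficiently deep renormalization, a holomorphic commuting pair structure as in Definition \ref{def:hcp} with a definite modulus. I would follow the de~Faria–Yampolsky strategy: pull back a fixed round annulus (or a fixed quasidisk neighbourhood of the appropriate real interval) under the high iterates $\widehat f^{q_n}$, $\widehat f^{q_{n+1}}$ that define $\xi$ and $\eta$, using the fact that, away from the critical points, these are compositions of univalent maps with uniformly bounded distortion (Koebe), while near each critical point the map is a cubic branched cover, so the loss of modulus at each critical passage is controlled by the degree ($3$, hence the degree bound $K$ in $\hol^K(\mu)$). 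Because $\rho(\zeta)$ has type bounded by $B$, the number of critical passages per renormalization cycle is bounded, so only a bounded amount of modulus is lost per step; the key estimate is that the Poincaré-length (or extremal-length) shrinking caused by the univalent parts compensates this loss — this is exactly where the real a priori bounds feed in, via a "complex bounds from real bounds" argument (the interval $I_n$ sits deep inside $I_{n-1}$ in the real bounds sense, which by a standard Grötzsch-type estimate forces a definite annulus around the pulled-back domain). One then checks conditions (1)–(6) of Definition \ref{def:hcp} and the normalization/quasidisk conditions (1)–(5) of Definition \ref{H_mu_def} hold with $\mu=\mu(B)$, using compactness (Lemma \ref{bounds compactness}) to upgrade qualitative statements to uniform ones.

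I expect the main obstacle to be precisely the bi-critical feature that motivates the whole paper: the two critical orbits may come combinatorially close, so that within a single renormalization cycle the high iterate passes near \emph{both} cubic critical points, and one must show the modulus loss from the two passages does not conspire to degenerate the pair — i.e.\ that the degree stays bounded (it is at most $9$ in the worst case, when the orbits "collide", but generically $3$) and that one still recovers a definite annulus. The resolution uses that $B$ bounds the combinatorics, so the geometry of the two critical orbits relative to $\cP_k$ is controlled, and the argument of \cite{ESY} (which this theorem quotes) handles the collision case by allowing the underlying map of the pair to be unicritical of order $9$, with the degree bound $K\le 9$ in $\hol^K(\mu)$ absorbing this.

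Since Theorem \ref{th:bounds} is quoted verbatim from \cite{ESY}, the actual "proof" here is a reference to that paper together with the reduction above; in the present paper it serves as an input, so I would simply cite \cite{ESY} for the full argument and record the reduction sketch for the reader's orientation.
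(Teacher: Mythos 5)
The paper offers no proof of Theorem~\ref{th:bounds} at all --- it is imported verbatim from \cite{ESY} as an external input --- and you correctly identify this and conclude by citing \cite{ESY}, which is exactly what the paper does. Your orientation sketch (real bounds plus a de~Faria--Yampolsky style pullback argument with controlled modulus loss at the critical passages, degree at most $9$) is a fair description of the strategy in \cite{ESY} and does not conflict with anything in the present paper.
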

 Based on this result, the following renormalization convergence statement was obtained in \cite{ESY}:
 \begin{theorem}[\bf Exponential convergence of the renormalizations]\label{theorem:convergencerenor}
Let $\zeta_1$ and $\zeta_2$ be two analytic bi-cubic circle maps with same bounded type rotation number $\rho\in\cBT_B$. Then there exist $n_0>0$, $C>0$ and $0<\lambda=\lambda(B)<1$ such that for all $n\geq n_0$ we have
\begin{equation}\label{eqconvrenor}
 \dist_C(\mathcal{R}^n\zeta_1, \mathcal{R}^n\zeta_2 ) \leq C \, \lambda^n,
\end{equation}
where $\dist_C$ stands for the Carath{\'e}odory distance.
\end{theorem}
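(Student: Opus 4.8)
The plan is to deduce exponential convergence from the two ingredients already on the table: the complex \emph{a priori} bounds of Theorem~\ref{th:bounds}, which confine all deep renormalizations of $\zeta_1$ and $\zeta_2$ to a sequentially compact family $\hol^K(\mu)$ with $\mu=\mu(B)$ and $K=9$ (the two cubic points may collide into an order-$9$ point), and the contraction mechanism for renormalization acting on this compact family. First I would fix $n_0$ so that for $n\ge n_0$ both $\cR^n\zeta_i$ lie in $\hol^K(\mu)$; this uses Theorem~\ref{th:bounds} together with the real bounds of Theorem~\ref{th:realbds} to pin down the geometry. Since $\rho(\zeta_1)=\rho(\zeta_2)=\rho$, the renormalized pairs $\cR^n\zeta_1$ and $\cR^n\zeta_2$ have the \emph{same} rotation number $G^n(\rho)$ at every level, so the only difference between them is ``transversal'' to the rotation-number direction, and one expects it to be contracted.

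The core step is a \emph{Schwarz/hyperbolic-contraction} estimate: I would show that there is a uniform $N=N(B)$ and $0<\lambda_0=\lambda_0(B,\mu)<1$ such that whenever $\cH_1,\cH_2\in\hol^K(\mu)$ have the same rotation number of bounded type $\le B$, one has
\begin{equation*}
  \dist_C(\cR^{N}\cH_1,\cR^{N}\cH_2)\le \lambda_0\,\dist_C(\cH_1,\cH_2).
\end{equation*}
The mechanism is the standard one from \cite{Yam02,Yam03}: a single renormalization step is built by composing the maps $\eta,\xi$ a bounded number of times (bounded because the continued-fraction entries are $\le B$) and restricting to a domain whose closure is compactly contained, by the \emph{a priori} bounds, in the previous domain; such a restriction is a strict contraction of the Carath\'eodory (Teichm\"uller/Kobayashi-type) metric, with a contraction factor governed only by the moduli of the separating annuli, hence only by $\mu=\mu(B)$. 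Composing $N$ such steps, with $N$ chosen so that the accumulated domain shrinkage beats the expansion coming from the nonlinear parts of $\eta,\xi$ (whose derivatives are controlled by the $C^1$ real bounds of Theorem~\ref{th:realbds}), yields the displayed contraction. Iterating gives $\dist_C(\cR^{n}\zeta_1,\cR^{n}\zeta_2)\le C\lambda^{n}$ with $\lambda=\lambda_0^{1/N}$ and $C$ depending on the (finite) initial Carath\'eodory distance, which is bounded once $n\ge n_0$ because both pairs lie in the compact set $\hol^K(\mu)$; this is exactly \eqref{eqconvrenor}.

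The main obstacle I anticipate is precisely the feature flagged in the introduction: the possible collision of the two critical points, i.e.\ the appearance in the limit of pairs with a single critical point of order $9$. Along such sequences the two ``transversal'' directions (change of rotation number aside, the direction measuring the $\mu_f$-distance between the critical points) can degenerate, and a naive domain-restriction argument could give a contraction factor tending to $1$. To handle this one must verify that the Carath\'eodory metric on $\hol^K(\mu)$ remains uniformly comparable to a genuine metric even near the order-$9$ stratum, so that the Schwarz-lemma contraction stays uniform; equivalently, one needs the \emph{a priori} bounds to give a definite annulus modulus $\mu(B)$ \emph{uniformly up to and including} the collision locus, which is the content of Theorem~\ref{th:bounds}. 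A secondary technical point is bookkeeping the number of compositions per renormalization step and the dependence of all constants on $B$ alone (not on $n$ or on the maps), which follows from the universality in Theorems~\ref{th:realbds} and~\ref{th:bounds}. Once these uniformities are in hand, the contraction estimate and its iteration are routine.
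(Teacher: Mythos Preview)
This theorem is not proved in the present paper: it is quoted from \cite{ESY} without argument, so there is no in-paper proof to compare your proposal against. Your outline does follow the standard template for results of this type---complex bounds confine deep renormalizations to a compact family $\hol^K(\mu)$, and a Schwarz/hyperbolic-metric argument then yields uniform contraction---which is indeed the shape of the unicritical arguments in \cite{dFdM2,Yam02,Yam03} and of the bi-cubic extension in \cite{ESY}.

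That said, there is a real gap in your reasoning, independent of what \cite{ESY} actually does. You write that once the rotation numbers agree, ``the only difference between them is transversal to the rotation-number direction, and one expects it to be contracted.'' For bi-cubic pairs this is precisely what does \emph{not} hold: the whole thrust of the present paper is that renormalization has a \emph{second} expanding direction, parametrized by the signature coordinate $\delta$ and governed by the expanding cocycle~\eqref{eq:cocycle}. Two pairs with the same $\rho$ but different $\delta$ have renormalizations whose signatures diverge, so your displayed inequality $\dist_C(\cR^N\cH_1,\cR^N\cH_2)\le\lambda_0\,\dist_C(\cH_1,\cH_2)$ cannot possibly hold on $\hol^K(\mu)$ under the sole hypothesis of equal rotation number. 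The convergence statement requires equal \emph{signatures}---this is exactly what singles out a stable leaf, compare the role of $S_*$ in Theorem~\ref{stable manifold}---and any contraction argument must be run on that locus; your proposal never isolates it. The obstacle you do flag, the collision of the two critical points, is a genuine technical issue in \cite{ESY}, but it is orthogonal to this more basic omission.
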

 Theorems \ref{th:bounds} and \ref{theorem:convergencerenor} imply the following:
 \begin{theorem}
   \label{th:attractor}
   Let $B\in\N$. There exists a Carath\'eodory compact set $\cA_B$ of analytic commuting pairs such that the following holds:
   \begin{itemize}
   \item each $\zeta\in\cA_B$ lies in $\hol(\mu)$ with $\mu=\mu(B)$ as above;
   \item $\cA_B$ is $\cR$-invariant: $\cR(\cA_B)=\cA_B$;
     \item for every bi-cubic pair $\zeta_1$ with $\rho(\zeta_1)\in\cBT_B$, the renormalizations $\cR^n(\zeta_1)$ converge to $\cA_B$ geometrically fast in the Carath\'eodory sense, with rate $\lambda(B)$.
     \end{itemize}
   \end{theorem}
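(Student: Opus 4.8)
The plan is to deduce Theorem~\ref{th:attractor} as a fairly soft consequence of the two quantitative inputs already in hand, namely the uniform complex {\it a priori} bounds (Theorem~\ref{th:bounds}) and the exponential contraction of renormalizations sharing a rotation number (Theorem~\ref{theorem:convergencerenor}), together with the compactness statement of Lemma~\ref{bounds compactness}. First I would fix $B$ and consider, for each $\rho\in\cBT_B$, any bi-cubic commuting pair $\zeta_\rho$ with $\rho(\zeta_\rho)=\rho$ (e.g.\ the one coming from Zakeri's Blaschke product of Lemma~\ref{Zakericubic}, with an arbitrary choice of signature parameter $\delta$). By Theorem~\ref{th:bounds}, all renormalizations $\cR^n\zeta_\rho$ lie in $\hol(\mu)$ with $\mu=\mu(B)$, and since the renormalization operator does not increase the degree (each $\cR^n\zeta_\rho$ has two cubic critical points or a single critical point of order $9$), they all lie in $\hol^9(\mu)$. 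Hence the set
\[
\cK_B \;=\; \overline{\bigl\{\,\cR^n\zeta_\rho \;:\; \rho\in\cBT_B,\ n\ge n_0\,\bigr\}}
\]
(closure in the Carath\'eodory topology of $\hol$) is contained in the sequentially compact set $\hol^9(\mu)$, hence is itself Carath\'eodory compact.

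Next I would define $\cA_B=\bigcap_{m\ge 0}\overline{\cR^m(\cK_B)}$, the maximal $\cR$-invariant subset of $\cK_B$ in the usual dynamical-systems sense; because $\cR$ acts on rotation numbers as $G^2$ and $\cBT_B$ is $G$-invariant, one checks $\cR(\cK_B)\subset\cK_B$ modulo the closure, so this nested intersection is a decreasing sequence of nonempty compacts and $\cA_B$ is a nonempty compact set with $\cR(\cA_B)=\cA_B$; moreover $\cA_B\subset\hol(\mu)$ since $\hol^9(\mu)$ is closed. It remains to prove the attraction statement: given a bi-cubic pair $\zeta_1$ with $\rho(\zeta_1)=\rho\in\cBT_B$, I would pair it with a reference pair $\zeta_\rho$ as above having the {\it same} rotation number; Theorem~\ref{theorem:convergencerenor} gives $\dist_C(\cR^n\zeta_1,\cR^n\zeta_\rho)\le C\lambda(B)^n$. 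Since $\cR^n\zeta_\rho\in\cK_B$ for $n\ge n_0$, this already shows $\dist_C(\cR^n\zeta_1,\cK_B)\le C\lambda^n$. To upgrade ``distance to $\cK_B$'' to ``distance to $\cA_B$'' I would argue that any Carath\'eodory limit point of $\{\cR^n\zeta_1\}$ is the orbit of a point of $\cA_B$: if $\cR^{n_k}\zeta_1\to\zeta_\infty$, then by compactness and the contraction estimate $\zeta_\infty$ lies in the closure of the full orbit $\{\cR^n\zeta_\rho\}$, and applying the same argument to $\cR^{-j}$-preimages (i.e.\ to deeper renormalizations of the reference maps, which still have bounded type) places $\zeta_\infty$ in $\cR^m(\cK_B)$ for every $m$, hence in $\cA_B$; a standard compactness/triangle-inequality argument then converts this into the uniform geometric bound $\dist_C(\cR^n\zeta_1,\cA_B)\le C'\lambda(B)^n$ with a possibly adjusted constant.

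The one genuinely delicate point is the second bullet, the exact invariance $\cR(\cA_B)=\cA_B$ (as opposed to mere forward invariance), which requires surjectivity of $\cR$ restricted to the attractor. This is where the bounded-type hypothesis and the structure of $\cBT_B$ under the Gauss map are essential: every $\rho\in\cBT_B$ has a $G$-preimage in $\cBT_B$ (prepend any digit $\le B$), so every renormalization orbit extends backwards within the class, and together with the uniform bounds and sequential compactness of $\hol^9(\mu)$ one can run the standard inverse-limit construction to produce, for each $\zeta\in\cA_B$, a preimage in $\cA_B$. I would formalize this by noting that $\cA_B$ is, by construction, the decreasing intersection $\bigcap_m\overline{\cR^m\cK_B}$ and that each $\overline{\cR^m\cK_B}\supset\overline{\cR^{m+1}\cK_B}$, so $\cR(\cA_B)\supseteq\bigcap_m \cR(\overline{\cR^m\cK_B})\supseteq\bigcap_m\overline{\cR^{m+1}\cK_B}=\cA_B$; combined with forward invariance this gives equality. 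The remaining verifications (that the $\hol(\mu)$ conditions pass to Carath\'eodory limits, and that $\dist_C$ is genuinely a metric on the relevant compact family so that triangle inequalities are legitimate) are routine given Lemma~\ref{bounds compactness} and the definitions, so I would state them briefly and refer to the analogous unicritical arguments in \cite{Yam02,Yam03}.
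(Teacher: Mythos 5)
The paper itself offers no written proof of Theorem~\ref{th:attractor} --- it is stated as a direct consequence of Theorems~\ref{th:bounds} and~\ref{theorem:convergencerenor} (both imported from \cite{ESY}) together with the compactness of $\hol^K(\mu)$ --- and your proposal fleshes out exactly that route, so in spirit the approaches coincide. Most of your construction (the compact set $\cK_B$, the nested intersection $\cA_B=\bigcap_m\overline{\cR^m\cK_B}$, forward invariance, and the surjectivity of $\cR|_{\cA_B}$ via prepending digits $\le B$ and an inverse-limit argument in the sequentially compact space $\hol^K(\mu)$) is sound.

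There is, however, one step that does not work as written: the upgrade from ``$\dist_C(\cR^n\zeta_1,\cK_B)\le C\lambda^n$ and every subsequential limit of $\cR^n\zeta_1$ lies in $\cA_B$'' to ``$\dist_C(\cR^n\zeta_1,\cA_B)\le C'\lambda^n$''. Knowing that all limit points lie in a compact invariant set yields $\dist_C(\cR^n\zeta_1,\cA_B)\to 0$, but no triangle inequality or compactness argument converts this into a \emph{geometric} rate (compare $x_n=1/\log n$ accumulating on $\{0\}$). To get the rate you must, for each fixed $n$, exhibit an explicit point $\omega_n\in\cA_B$ with $\dist_C(\cR^n\zeta_1,\omega_n)\le C\lambda^n$. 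The standard way is the backward-extension device you already invoke for surjectivity: choose pairs $h_k$ with $\rho(h_k)=[b^{(k)}_{-2k},\dots,b^{(k)}_{-1},a_0,a_1,\dots]\in\cBT_B$ (and signature chosen so that the expanding cocycle \eqref{eq:cocycle} returns the signature of $\zeta_1$ after $2k$ steps), so that $\cR^k h_k$ and $\zeta_1$ satisfy the hypotheses of Theorem~\ref{theorem:convergencerenor}; then $\dist_C(\cR^n\zeta_1,\cR^{n+k}h_k)\le C\lambda^n$ uniformly in $k$, and a Carath\'eodory limit of $\cR^{n+k}h_k$ as $k\to\infty$ is the desired $\omega_n\in\cA_B$. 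This also forces you to note that the constants $C,n_0$ in Theorem~\ref{theorem:convergencerenor} must be uniform over pairs in $\hol(\mu(B))$ (they are, by the beau nature of the bounds), and that the reference pair cannot have an arbitrary signature $\delta$ unless one reads Theorem~\ref{theorem:convergencerenor} as literally requiring only equality of rotation numbers.
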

 Note, that pairs in $\cA_B$ can have two cubic critical points, or only one, of order $9$ -- the maps latter correspond to the situations when the two cubic critical points ``collide'' in the limit.

 \begin{remark}
   \label{rem:symmetry}
   Note also that every pair $\zeta=(\eta,\xi)\in\cA_B$ consists of maps which decompose as
   $$\eta=\phi_\eta(z^3),\;\xi=\phi_\xi(z^3),$$
   where $\phi_\eta$ and $\phi_\xi$ are holomorphic in a neighborhood of the origin. The space of pairs with this cubic symmetry property is clearly renormalization-invariant, and limit points of renormalization belong to this space by the standard machinery of renormalization theory. 
   \end{remark}

 Let us denote $\cP_B\subset \cA_B$ the set of periodic orbits of renormalization with rotation numbers in $\cBT_B$.
As shown in \cite{Yam2019}, all periodic signatures are represented in $\cP_B$.
 From the standard considerations of compactness, as well as the density of periodic orbits of the cocylcle (\ref{eq:cocycle}), we have
 \begin{proposition}\label{rem:per-dense}
The set $\cP_B$ is dense in $\cA_B$ in the Carath{\'e}odory sense.
   \end{proposition}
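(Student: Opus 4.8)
The plan is to prove that the set $\cP_B$ of periodic renormalization orbits with rotation numbers in $\cBT_B$ is dense in the attractor $\cA_B$ in the Carath\'eodory sense, by combining three ingredients: the $\cR$-invariance and Carath\'eodory-compactness of $\cA_B$ from Theorem~\ref{th:attractor}, the exponential convergence of renormalizations of bi-cubic pairs with the same bounded-type rotation number from Theorem~\ref{theorem:convergencerenor}, and the density of periodic orbits of the expanding cocycle~(\ref{eq:cocycle}) acting on signatures.

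\medskip

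First I would fix $\zeta_*\in\cA_B$ and a Carath\'eodory neighborhood; the goal is to produce a periodic orbit of $\cR$ inside it. Since $\zeta_*\in\cA_B\subset\hol(\mu(B))$ and has a bounded-type rotation number $\rho_*=\rho(\zeta_*)\in\cBT_B$ with a well-defined signature $\cC(\zeta_*)=(\rho_*,\delta_*)$, I would first approximate the signature: because the cocycle~(\ref{eq:cocycle}) is expanding and its periodic points with coordinates in $\cBT_B\times(0,1)$ are dense (periodic points of the Gauss map restricted to bounded-type numbers are dense, and the $\delta$-coordinate of a cocycle-periodic point is then the corresponding quadratic-irrational combination), I can pick a cocycle-periodic signature $(\rho,\delta)$ with $\rho\in\cBT_B$ as close as desired to $(\rho_*,\delta_*)$. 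By Zakeri's Lemma~\ref{Zakericubic} there is a unique Blaschke product $B_{\rho,\delta}$ of type~(\ref{Zakeri}) realizing this signature; extracting the corresponding commuting pair and renormalizing it gives a bi-cubic pair $\zeta_{\rho,\delta}$ whose renormalization orbit is governed by the periodic signature.

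\medskip

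Next I would run the dynamics. Let the signature $(\rho,\delta)$ have cocycle-period $p$, so $\cR^p$ maps the set of bi-cubic pairs with signature $(\rho,\delta)$ to itself. Apply Theorem~\ref{theorem:convergencerenor} to the two pairs $\zeta_{\rho,\delta}$ and $\cR^p\zeta_{\rho,\delta}$ (which have the same rotation number $\rho\in\cBT_B$, hence satisfy the hypotheses): their renormalizations converge exponentially in the Carath\'eodory metric, so $\dist_C(\cR^{np}\zeta_{\rho,\delta},\cR^{(n+1)p}\zeta_{\rho,\delta})\to 0$, and since all these pairs lie in the sequentially compact space $\hol^K(\mu)$ (Lemma~\ref{bounds compactness}), the sequence $\{\cR^{np}\zeta_{\rho,\delta}\}$ is Cauchy and converges to a pair $\zeta_{per}\in\cA_B$; continuity of $\cR$ on $\hol^K(\mu)$ in the Carath\'eodory topology then forces $\cR^p\zeta_{per}=\zeta_{per}$, so $\zeta_{per}\in\cP_B$. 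Finally, one more application of Theorem~\ref{theorem:convergencerenor} to the pair $\zeta_*$ (or rather to a real commuting pair underlying it) against $\zeta_{\rho,\delta}$, using that their rotation numbers are the \emph{nearby} bounded-type numbers $\rho_*$ and $\rho$ — here I would instead argue that $\cR^n\zeta_*$ converges to $\cA_B$ and that by choosing the signature approximation fine enough and $n$ large the two orbits stay Carath\'eodory-close — yields $\dist_C(\zeta_*,\zeta_{per})$ as small as desired.

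\medskip

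The main obstacle I anticipate is the last step: Theorem~\ref{theorem:convergencerenor} only compares renormalizations of pairs with \emph{exactly} the same rotation number, whereas $\rho_*$ and the periodic $\rho$ are merely close. To bridge this I would use an intermediate pair with rotation number exactly $\rho_*$ but signature close to the periodic one, track how a small perturbation of the initial signature propagates under finitely many renormalizations (the cocycle is expanding but on a fixed finite number of steps the distortion is bounded), and invoke the attractor-convergence bullet of Theorem~\ref{th:attractor} to absorb the tail; one must also check that the approximating Zakeri/Blaschke pairs genuinely lie in the domain of all required renormalizations and in $\hol(\mu(B))$, which follows from Theorem~\ref{th:bounds}. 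Alternatively, and more cleanly, I would bypass $\zeta_*$-comparison entirely by noting that $\cA_B$ equals the closure of the union over all $\rho\in\cBT_B$ of the $\omega$-limit sets of renormalization, approximate $\zeta_*$ first by a point on a genuine renormalization orbit with $\rho(\zeta)=\rho_*$, and only then approximate $\rho_*$ and $\delta$ by a periodic signature and pass to the corresponding periodic pair as above.
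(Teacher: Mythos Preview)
The paper does not give a detailed proof here; it simply asserts that the proposition follows ``from the standard considerations of compactness, as well as the density of periodic orbits of the cocycle~(\ref{eq:cocycle}),'' taking as input from \cite{Yam2019} that every cocycle-periodic signature is already realized by a point of $\cP_B$. Your plan is therefore consistent with, and considerably more detailed than, the paper's one-line justification.

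Two comments. First, your explicit construction of periodic points via the Zakeri family and a Cauchy-sequence argument is unnecessary: the paper imports their existence directly from \cite{Yam2019}, so the middle portion of your outline can be replaced by a citation. Second, the obstacle you flag in your final paragraph is the genuine one, and your fix (a) points in the right direction while fix (b) does not: approximating $\zeta_*$ by a deep renormalization $\cR^n\zeta$ and then perturbing $\zeta$ to a nearby periodic signature still forces you to compare $\cR^n$ on two distinct nearby pairs, which is exactly the difficulty you started with. The clean closure---presumably what ``standard compactness'' encodes---uses $\cR(\cA_B)=\cA_B$ to pick a preimage $\zeta_{-N}\in\cA_B$ with $\cR^N\zeta_{-N}=\zeta_*$, chooses a periodic signature whose first $N$ cocycle iterates agree with those of $\cC(\zeta_{-N})$, and then observes that the contraction in Theorem~\ref{theorem:convergencerenor} (with constants uniform over $\hol(\mu(B))$, as supplied by Theorem~\ref{th:bounds}) in fact only uses agreement of the combinatorics up to depth $N$; this yields $\dist_C\bigl(\zeta_*,\cR^N\zeta_{per}\bigr)\le C\lambda^N$ with $\cR^N\zeta_{per}\in\cP_B$.
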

 
\section{Cylinder renormalization of bi-cubic maps}
\label{section: cyl ren}
In this section, we recall the definition of cylinder renormalization that acts in the space of bi-cubic analytic circle maps. It was introduced in \cite{Yam2019}, and is based on the previous definition for the unicritical case in \cite{Yam02} (see also \cite{GY} for a more general exposition).

\subsection{Some functional spaces}
Firstly, let us denote $\pi$ the natural projection $\CC\to\CC/\ZZ$. For an equatorial annulus $U\subset \CC/\ZZ$
let ${\mathbf A}_U$ be the space of bounded analytic maps $\phi:U\to \CC/\ZZ$ continuous up to the boundary, such that 
$\phi(\TT)$ is homotopic to $\TT$, equipped with the uniform metric.
We shall turn ${\mathbf A}_U$ into a real-symmetric complex Banach manifold as follows. 
Denote $\tl U$ the lift $\pi^{-1}(U)\subset \CC$. The space of functions $\tl\phi:\tl U\to\CC$ which are analytic,
continuous up to the boundary, and $1$-periodic, $\tl\phi(z+1)=\tl\phi(z)$, becomes a Banach space when endowed
with the sup norm. Denote that space $\tl{\mathbf A}_U$. For a function $\phi:U\to\CC/\ZZ$ denote
$\check\phi$ an arbitrarily chosen lift $\pi(\check\phi(\pi^{-1}(z)))=\phi$.
Observe that $\phi\in{\mathbf A}_U$ if and only if $\tl\phi=\check\phi-\operatorname{Id}\in\tl{\mathbf A}_U$.
We use the local homeomorphism between $\tl{\mathbf A}_U$ and ${\mathbf A}_U$ given by
$$\tl\phi\mapsto \pi\circ(\tl\phi+\operatorname{Id})\circ\pi^{-1}$$ 
to define the atlas on  ${\mathbf A}_U$. The coordinate change transformations
are given by $\tl\phi(z)\mapsto \tl\phi(z+n)+m$ for $n,m\in\ZZ$, therefore with this atlas 
${\mathbf A}_U$ is a real-symmetric complex Banach manifold; we denote ${\mathbf A}_U^\RR$ its real slice, consisting of the real-symmetric maps
$\phi(\TT)=\TT$.

\noindent
\begin{definition}
  \label{cylren}
Given a  map  $f \in {\mathbf A}_U$ we say that 
it is {\it cylinder renormalizable} with period $k$ if the following holds:
\begin{itemize}
\item there exists $m\in \NN$ such that the rotation number $\rho(f)$ has at least $m+1$ digits in its continued fraction expansion, and
  $k=q_m$;
\item there exist repelling periodic points $p_1$, $p_2$ of $f$ in $U$ with periods $k$
and a simple arc $l$ connecting them such that $f^k(l)$ is a simple arc, and  $f^k(l)\cap l=\{p_1,p_2\}$;
\item the iterate $f^k$  is defined and univalent in a neighborhood $N$ of the curve $l$; if we denote $C_f$ the domain  bounded by $l$ and $f^k(l)$,
then the quotient of $(C_f\cup N\cup f^k(N))\sm\{p_1,p_2\}$ by $z\sim f^k(z)$ is a 
Riemann surface conformally isomorphic to the cylinder $\CC/\ZZ$ 
(we will  call a domain $C_f$ with these properties
a {\it fundamental crescent of $f^k$});
\item for a point $z\in \bar C_f$ with $\{f^j(z)\}_{j\in\NN}\cap \bar C_f\ne \emptyset$,
set $R_{C_f}(z)=f^{n(z)}(z)$ where $n(z)\in\NN$ is the smallest value for which $f^{n(z)}(z)\in\bar C_f$.
We further require that 
there exists a point $c$ in the domain of $R_{C_f}$ such that $f^m(c)=0$ for some $m<n(c)$.
\end{itemize}

Denote $\hat f$ the projection of $R_{C_f}$ to $\CC/\ZZ$ with $c\mapsto 0$. 
We will say that $\hat f$ is a {\it cylinder renormalization} of $f$
with period $k$.

Note that if, in the above, $f\in{\mathbf A}_U^\RR$ is a multicritical circle map, then $\hat f$ is also a multicritical circle map.
\end{definition}

\noindent
The relation of the cylinder renormalization procedure to renormalization of pairs is as follows:
\begin{proposition}[\cite{Yam02}]
  \label{prop cylren1}
Suppose $f$ is a multicritical circle map with rotation number $\rho(f)\in \RR\setminus\QQ$.
Assume that it is cylinder renormalizable with period $q_n$. Then the corresponding renormalization
$\hat f$ is also a multicritical circle map with rotation number $G^n (\rho(f))$. Also, 
$\cR\hat f$ (in the sense of pairs) is analytically conjugate to $\cR^{n+1}f$.
\end{proposition}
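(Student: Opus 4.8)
## Proof proposal for Proposition \ref{prop cylren1}

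The plan is to unwind the construction of the cylinder renormalization $\hat f$ and identify it, step by step, with the return map that defines the renormalized pair $\cR^{n+1}f$. First I would record that the hypothesis ``$f$ is cylinder renormalizable with period $q_n$'' gives us a fundamental crescent $C_f$ for $f^{q_n}$, bounded by an arc $l$ and its image $f^{q_n}(l)$, joining two repelling periodic points $p_1,p_2$ of period $q_n$. The gluing $z\sim f^{q_n}(z)$ turns (a neighborhood of) $C_f$ into a copy of the cylinder $\CC/\ZZ$, and $\hat f$ is the projection of the first-return map $R_{C_f}$ with the marked preimage $c$ of $0$ sent to $0$. The first step is to show that, after this gluing, the combinatorial picture of $R_{C_f}$ on the cylinder is exactly that of the rotation by $G^n(\rho(f))$: since $p_1,p_2$ have combinatorial rotation number $p_n/q_n$ under $f$, the arc $l$ together with the dynamical partition $\cP_n$ of $\TT$ cuts the circle into the pieces that, under the first-return construction, reassemble into the dynamical partition of level $0$ for a circle map with rotation number $[a_n,a_{n+1},\dots]=G^n(\rho(f))$. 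This is the standard ``renormalization changes the rotation number by the Gauss map'' bookkeeping, already used in Section \ref{sec:renorcp} to prove $\rho(\cR\zeta)=G(\rho(\zeta))$; here it is applied $n$ times at once rather than one renormalization at a time. The marked point $c$ with $f^m(c)=0$, $m<n(c)$, guarantees that $\hat f$ genuinely carries a critical point at $0$, hence $\hat f$ is a multicritical circle map with $\rho(\hat f)=G^n(\rho(f))$.

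The second, and more delicate, step is the assertion that $\cR\hat f$ is \emph{analytically} conjugate to $\cR^{n+1}f$, not merely topologically. Here I would use the fact that both objects are obtained as first-return maps of the \emph{same} analytic dynamical system $f$ (lifted to $\CC/\ZZ$), restricted to intervals that are closest-return intervals of level $n$ and $n+1$. Concretely: the renormalized pair $\cR^{n+1}f=(\tl f^{q_{n+2}}|_{\tl I_{n+1}},\tl f^{q_{n+1}}|_{\tl I_{n+2}})$ is built from branches of iterates of $f$ on the arcs $I_{n+1}, I_{n+2}$ around $0$. On the other side, $\cR\hat f$ (in the sense of pairs) is built from branches of iterates of $\hat f=R_{C_f}$ around $0$; but each branch of $R_{C_f}$ is itself a branch of an iterate of $f$, so $\cR\hat f$ is assembled from branches of iterates of $f$ on arcs that, after the cylinder identification, are the images of $I_{n+1}, I_{n+2}$. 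The cylinder-to-circle uniformization $h$ (the chart realizing $(C_f\cup N\cup f^{q_n}(N))/\!\sim\ \cong\ \CC/\ZZ$) is conformal, and it conjugates the relevant branches of $R_{C_f}$ to the relevant branches of $f$; passing to the pair normalization (the linear rescaling $\tl\ \cdot\ $ in Definition \ref{defren} is an affine, hence in particular analytic, change of coordinate) turns $h$ into an analytic conjugacy between $\cR\hat f$ and $\cR^{n+1}f$. I would spell this out by checking that $h$ sends the dynamical partition pieces of $\hat f$ of level $1$ to the dynamical partition pieces of $f$ of level $n+1$, and that under this correspondence the defining commuting-pair branches match.

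The main obstacle, in my view, is the careful identification of \emph{which} branches of iterates of $f$ constitute $R_{C_f}$ near $0$, and checking that they are precisely the ones giving $\cR^{n+1}f$ — in other words, making the ``$n$ renormalizations at once'' bookkeeping rigorous at the level of holomorphic branches rather than just combinatorially. One must verify that the domains are nested correctly and that no extra iterates sneak in, which is where the hypothesis that $C_f$ is a genuine fundamental crescent (so that the quotient is exactly $\CC/\ZZ$, with no folding) and that $p_1,p_2$ have period \emph{exactly} $q_n$ are essential. A secondary technical point is to make sure the uniformization $h$ is defined on a large enough neighborhood to contain the full domains of the commuting pair $\cR\hat f$, which follows from the univalence of $f^{q_n}$ on the neighborhood $N$ of $l$ in the definition of cylinder renormalizability. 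Once these branch identifications are in place, the analyticity of the conjugacy is automatic because every map in sight (the branches of $f$, the uniformization $h$, the affine normalization) is conformal. Since this proposition is quoted from \cite{Yam02}, I would present the argument concisely, emphasizing the multicritical bookkeeping and referring to \cite{Yam02} for the unicritical prototype.
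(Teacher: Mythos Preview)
The paper does not actually supply a proof of this proposition: it is stated with the attribution \cite{Yam02} and no argument is given in the text. So there is nothing to compare against in this paper, and your write-up already does more than the authors do here.

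That said, your sketch is the standard argument from \cite{Yam02} and is correct in outline. The identification of the rotation number via the combinatorics of closest returns is exactly how it goes, and the analyticity of the conjugacy between $\cR\hat f$ and $\cR^{n+1}f$ indeed comes for free once you observe that the uniformizing map $h$ of the fundamental crescent is conformal and intertwines the first-return branches of $R_{C_f}$ with the iterates $f^{q_{n+1}}$, $f^{q_{n+2}}$ restricted to $I_{n+1}$, $I_{n+2}$. One small sharpening: rather than worrying about whether $h$ extends to a ``large enough neighborhood'', it is cleaner to note that the commuting pair $\cR\hat f$ is defined on arcs lying inside the image of $h$ by construction (they are the projections of $I_{n+1}$, $I_{n+2}$), and the pre-renormalization branches of $\hat f$ on those arcs are, by definition of $R_{C_f}$, literally equal to $h\circ f^{q_{n+2}}\circ h^{-1}$ and $h\circ f^{q_{n+1}}\circ h^{-1}$ there. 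After linear normalization on both sides, the conjugacy is the appropriately rescaled $h$, which is analytic. Since the result is quoted without proof here, a one-line pointer to \cite{Yam02} together with the remark that the argument carries over verbatim to the multicritical setting would be entirely adequate.
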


\noindent
The next property of  cylinder renormalization is the following:
\begin{proposition}[\cite{Yam02}]
\label{ren open set}
Let $f$ be cylinder renormalizable with period $k=q_n$, with a cylinder renormalization $\hat f$ corresponding to the fundamental 
crescent $C_f$, and let $W$ be any equatorial annulus compactly contained in the domain $U_1$ of $\hat f$.
Then there is an open neighborhood $G$ of $f$ in ${\mathbf A}_U$ such that every map $g\in G$ is cylinder
renormalizable, with a fundamental crescent $C_g\subset U$ which depends continuously on $g$ in the
Hausdorff sense. Moreover, there exists a holomorphic motion 
$\chi_{g}:\partial C_f\mapsto \partial C_g$ over $G$, such that 
$\chi_g(f(z))=g(\chi_g(z))$.
And finally, the renormalization $\hat g$ is contained in ${\mathbf A}_W$.
\end{proposition}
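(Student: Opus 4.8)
The statement to prove (Proposition \ref{ren open set}) is attributed to \cite{Yam02}, so the right strategy is to reduce everything to the structural stability of the fundamental crescent construction, exactly as in the unicritical case, and then observe that nothing in the argument uses the number of critical points. First I would fix the fundamental crescent $C_f$, its bounding arcs $l$ and $f^k(l)$, and the two repelling periodic points $p_1,p_2$ of period $k=q_n$. Since $p_1,p_2$ are hyperbolic repelling fixed points of $f^k$, they persist under small perturbations: for $g$ near $f$ in ${\mathbf A}_U$ there are repelling periodic points $p_1(g),p_2(g)$ of period $k$ depending analytically on $g$, and the iterate $g^k$ is defined and univalent on a fixed neighborhood $N$ of $l$ for all $g$ in a neighborhood $G_0$ of $f$ (this uses that $f^k$ is univalent on $N$, an open condition, together with continuity of $g\mapsto g^k$ on $N$).

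Next I would build the holomorphic motion $\chi_g$ of $\partial C_f$. The boundary $\partial C_f = l \cup f^k(l)$; I would first construct the motion of the arc $l$ (for instance by transporting $l$ via the linearizing coordinates at $p_1(g)$ and $p_2(g)$, or simply by a partition-of-unity isotopy keeping the endpoints at $p_1(g),p_2(g)$), check it is a holomorphic motion over $G_0$ with base point $f$, and then define $\chi_g$ on $f^k(l)$ by the equivariance relation $\chi_g(f^k(z)) = g^k(\chi_g(z))$ for $z \in l$. Compatibility at the endpoints holds because $p_1(g),p_2(g)$ are fixed by $g^k$; holomorphy in the parameter is inherited from $\chi_g|_l$ and analyticity of $g^k$. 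One then verifies $\chi_g(f(z)) = g(\chi_g(z))$ on the relevant part of $\partial C_f$ — on $l$ this is a definition-free statement about the first-return dynamics, and the intertwining propagates exactly as in \cite{Yam02}. By the $\lambda$-lemma, $\chi_g$ extends to a holomorphic motion of $\overline{C_f}$, so the crescent $C_g := \chi_g(C_f)$ depends continuously (Hausdorff) on $g$ and satisfies $g^k(\partial C_g \cap \text{one arc}) = $ the other arc, $g^k(l_g)\cap l_g = \{p_1(g),p_2(g)\}$. Shrinking $G_0$ if necessary, the quotient of $(C_g\cup N\cup g^k(N))\setminus\{p_1(g),p_2(g)\}$ by $z\sim g^k(z)$ stays conformally a cylinder (the modulus is controlled by that of $C_f$ up to a small error via the motion and Slodkowski's theorem, so it remains positive and finite), hence $g$ is cylinder renormalizable with crescent $C_g$, and the required marked point $c(g)$ with $g^{m}(c(g))=0$ persists by the implicit function theorem since $c$ was a transverse solution of $f^m(c)=0$ in the domain of $R_{C_f}$.

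Finally, for the domain of the renormalized map: the first-return map $R_{C_g}$ projects to $\hat g$ on a cylinder $\CC/\ZZ$, and the uniformization of the quotient cylinder depends continuously on $g$ via the holomorphic motion. Since $W$ is an equatorial annulus compactly contained in the domain $U_1$ of $\hat f$, continuity of the uniformizing coordinates gives, after possibly shrinking $G_0$ to a final neighborhood $G$, that $W$ is still compactly contained in the domain of $\hat g$; thus $\hat g \in {\mathbf A}_W$. I expect the main obstacle to be the careful bookkeeping of the holomorphic motion near the two corner points $p_1,p_2$ where $l$ and $f^k(l)$ meet — ensuring the motion is genuinely holomorphic in the parameter there and that the equivariance relations are mutually compatible — together with verifying that the conformal modulus of the quotient (hence the cylinder structure) is preserved under the perturbation; both are handled by combining hyperbolicity of $p_1,p_2$ with the $\lambda$-lemma exactly as in the unicritical proof of \cite{Yam02}, and the multicritical nature of $f$ plays no role in this part of the argument.
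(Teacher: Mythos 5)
Your proposal follows essentially the same route as the proof in \cite{Yam02} to which the paper defers: persistence of the repelling periodic points under perturbation, a holomorphic motion of the crescent boundary built on one arc and propagated to the other by equivariance, the $\lambda$-lemma, and continuity of the uniformizing coordinate to conclude $\hat g\in{\mathbf A}_W$. One step of your justification is stated wrongly, however: the quotient must be conformally isomorphic to $\C/\Z$, i.e.\ to the doubly infinite cylinder $\C\setminus\{0\}$, so showing that its modulus ``remains positive and finite'' would establish the wrong conclusion --- a finite-modulus annulus is not conformally $\C/\Z$, and the construction would fail. The correct argument is already within reach of the tools you invoke: the Slodkowski extension of the holomorphic motion $\chi_g$ is a quasiconformal homeomorphism conjugating the gluing $z\sim f^k(z)$ to $z\sim g^k(z)$, so the quotient for $g$ is quasiconformally homeomorphic to that for $f$, and since the conformal type $\C\setminus\{0\}$ is a quasiconformal invariant among annuli, the quotient for $g$ is again conformally $\C/\Z$. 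A second, minor point: the persistence of the marked point $c$ with $g^{m}(c_g)=0$ should be deduced from Rouch\'e/Hurwitz (a zero of $g^m$ appears near $c$ for $g$ close to $f$) rather than from the implicit function theorem, since $Df^{m}(c)$ may vanish when the orbit of $c$ passes through a critical point before time $m$; existence, not transversality, is all that the definition requires.
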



\noindent
The next proposition shows that $g\mapsto \hat g$ is well-defined:
\begin{proposition}
\label{well-defined}
In the notations of Proposition~\ref{ren open set}, let $C'_g$ be a different family of 
fundamental crescents, which also depends continuously on $g\in G$ in the Hausdorff
sense. Then there exists an open neighborhood $G'\subset G$ of $f$, such that 
the cylinder renormalization of $g\in G'$ corresponding to $C'_g$ is also $\hat g$.
\end{proposition}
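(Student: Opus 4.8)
The plan is to show that the two cylinder renormalizations agree on an open set by first establishing that they agree at the base map $f$, and then propagating this equality to nearby maps using the continuity provided by Proposition~\ref{ren open set}. The key structural fact is that a cylinder renormalization is, up to analytic conjugacy, an intrinsic object: it is the first-return map of $f$ to the conformal cylinder obtained by gluing the two sides of a fundamental crescent by $f^k$, together with a marked point sent to $0$. Two fundamental crescents $C_f$ and $C'_f$ for the same period $k$ produce conformally isomorphic quotient cylinders — this is the standard fact (see \cite{Yam02}) that the cylinder renormalization does not depend on the choice of fundamental crescent, only on $k$ and on the choice of marked point $c$ with $f^m(c)=0$. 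Since the marking is dictated by the same combinatorial condition $f^m(c)=0$ in both constructions, the renormalization of $f$ via $C'_f$ coincides with $\hat f$. Thus $g\mapsto \hat g$ and $g\mapsto \hat g'$ (the renormalization via $C'_g$) agree at $g=f$.

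Next I would use Proposition~\ref{ren open set} twice: once for the family $C_g$ and once for the family $C'_g$. This gives open neighborhoods $G_1$ and $G_2$ of $f$ on which $\hat g$ and $\hat g'$ are respectively defined, together with holomorphic motions $\chi_g:\partial C_f\to\partial C_g$ and $\chi'_g:\partial C'_f\to\partial C'_g$ conjugating the dynamics. On $G'=G_1\cap G_2$ both renormalizations are defined. To compare them, observe that for $g$ close to $f$ both $\hat g$ and $\hat g'$ are obtained as return maps of the same $g$ to cylinders built from $C_g$, respectively $C'_g$; since at $g=f$ these return maps coincide and the crescents vary continuously in the Hausdorff sense, the composition ``$\hat g'$ expressed in the coordinates of $\hat g$'' is a family of conformal isomorphisms between the two cylinders (both conjugating the same first-return dynamics of $g$ and respecting the marked point) which depends continuously on $g$ and equals the identity at $g=f$.

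The final step is a rigidity/normalization argument: a conformal automorphism of $\CC/\ZZ$ is a rotation composed with $z\mapsto \pm z + \text{const}$, and the constraints — it fixes $0$ (by the marking), it conjugates a non-trivial multicritical circle map to itself, and it is close to the identity — force it to be the identity for $g$ in a possibly smaller neighborhood $G'$. Hence $\hat g'=\hat g$ on $G'$, as claimed. The main obstacle I anticipate is making precise the claim that the intertwining maps between the two quotient cylinders form a continuous family vanishing (i.e. equal to the identity) at $f$: this requires care in tracking how the conformal gluing coordinate on the quotient cylinder depends on the crescent, and uses that the uniformizing coordinate is itself obtained continuously from the Hausdorff-continuous data $C_g$ and the holomorphic motion $\chi_g$. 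Once that continuity is in hand, the automorphism-rigidity argument is routine.
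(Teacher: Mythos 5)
The paper itself gives no proof of Proposition~\ref{well-defined} (it is imported from \cite{Yam02}), so your proposal can only be judged on its own terms. Your overall strategy is the standard one, but it has a circularity at its core: the equality $\hat f = \hat f'$ at the base point is exactly the content of the proposition (the case $g=f$), and you justify it by citing ``the standard fact that the cylinder renormalization does not depend on the choice of fundamental crescent'' --- i.e.\ by citing the statement being proven. Saying that both renormalizations are ``the first-return map to the conformal cylinder'' does not establish equality, because $C_f$ and $C'_f$ produce two \emph{different} quotient Riemann surfaces, and the whole issue is to exhibit a conformal isomorphism between them that conjugates the two return maps and matches the markings. Likewise, for nearby $g$ the phrase ``$\hat g'$ expressed in the coordinates of $\hat g$'' presupposes a map between the two cylinders that you never construct; this is not a matter of ``care,'' it is the entire proof.

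The missing construction is the dynamical identification: for $z$ in $C_g$ near $C_g\cap C'_g$, some iterate $g^{j(z)}(z)$ lies in $C'_g$, and since $z\sim g^k(z)$ this correspondence descends to a conformal map between essential subdomains of the two quotient cylinders, conjugating the return maps. One must then check that it extends to a degree-one isomorphism of $\CC/\ZZ$ (the ends are punctures, hence removable), that it preserves the ends, and that it sends the marked point $c$ to $c'$ (both lie on the backward orbit of $0$). At that point the argument of the paper's own Proposition~\ref{conj-maps} finishes the proof: a degree-one, end-preserving automorphism of $\CC/\ZZ$ is a translation, and fixing $0$ forces it to be the identity. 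Note that this makes your ``close to the identity, continuous in $g$'' step --- which you flag as the main obstacle --- unnecessary: rigidity of automorphisms of the cylinder plus the normalization at $0$ already force the identity for each $g$ separately, with no continuity argument needed. Two smaller points: the automorphism group of $\CC/\ZZ$ is $z\mapsto\pm z+c$ with $c\in\CC/\ZZ$ (not ``a rotation composed with'' such a map), and the constraint ``conjugates a non-trivial circle map to itself'' plays no role (and is not even meaningful for non-real-symmetric $g\in G$).
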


\noindent
A key property of the cylinder renormalization is the
following (see Proposition~6.7 of \cite{Yam2019}):

\begin{proposition}
  \label{analytic dependence2}
  In the above notation,
 the correspondence $g\mapsto\hat g$ is a locally analytic operator from a neighborhood of $f\in\mathbf A_U$ to
    $\mathbf A_W$.
   
\end{proposition}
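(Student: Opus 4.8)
\textbf{Proof plan for Proposition~\ref{analytic dependence2}.}

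The plan is to reduce the statement to the analyticity of the return-map construction through a holomorphic motion, and then to invoke the $\lambda$-lemma together with a standard criterion for analyticity of Banach-space-valued maps. First I would fix the fundamental crescent $C_f$ and the equatorial annulus $W\Subset U_1$ as in Proposition~\ref{ren open set}, and recall that for $g$ in a neighborhood $G$ of $f$ we obtained a holomorphic motion $\chi_g:\partial C_f\to\partial C_g$ over $G$ conjugating $f$ to $g$ on these boundaries. The first step is to upgrade this boundary motion to a holomorphic motion of a whole neighborhood: using the $\lambda$-lemma (Słodkowski's extension), $\chi_g$ extends to a holomorphic motion of $\CC/\ZZ$, hence in particular of $\bar C_f\cup N$, which varies holomorphically with the parameter $g\in G$ and is quasiconformal in $z$ for each fixed $g$. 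One then checks that this extended motion still conjugates the dynamics where it needs to, or rather that it conjugates the first-return maps $R_{C_f}$ and $R_{C_g}$ after possibly shrinking $G$; the point is that the combinatorial data (the return times $n(z)$, the marked preimage of $0$) are locally constant in $g$ because repelling periodic points and the finitely many iterates involved move holomorphically and the relevant inclusions are open conditions.

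The second step is to uniformize. The quotient of the crescent by $z\sim f^k(z)$ is isomorphic to $\CC/\ZZ$; let $\psi_g$ denote the uniformizing coordinate, normalized so that the marked point $c_g$ (with $f^m(c_g)=0$) goes to $0\in\CC/\ZZ$ and so that the real slice is preserved. The cylinder renormalization is then $\hat g=\psi_g\circ R_{C_g}\circ\psi_g^{-1}$, regarded as an element of $\mathbf A_W$ once restricted to the equatorial annulus $W$. So I must show two things depend analytically on $g$: (i) the conformal uniformization $\psi_g$ of the dynamically defined cylinder, as a map into an appropriate space of functions on $W$; and (ii) the composition with $R_{C_g}$, which is itself built from iterates of $g$ and hence depends analytically on $g$ by composition and the open-mapping considerations already used in the proof of Proposition~\ref{ren open set}.

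For step (ii), composition of analytic maps between Banach spaces is analytic, and the iterates $g^j$ on fixed compact sets depend analytically on $g\in{\mathbf A}_U$; the only care needed is that domains of definition shrink in a controlled, $g$-independent way, which is exactly what the neighborhood $W\Subset U_1$ buys us. For step (i) I would use the following standard principle: a family of conformal maps whose domains move by a holomorphic motion, with a holomorphically varying normalization, depends holomorphically on the parameter. Concretely, transport $\psi_f$ by the extended holomorphic motion to get a family of quasiconformal maps $\psi_f\circ\chi_g^{-1}$ whose Beltrami coefficients $\mu_g$ depend holomorphically on $g$ (the Beltrami coefficient of a holomorphic motion is holomorphic in the parameter, by the $\lambda$-lemma); solving the Beltrami equation with these coefficients by the Ahlfors--Bers measurable Riemann mapping theorem with holomorphic dependence on parameters yields a holomorphically varying correction, and composing produces the conformal $\psi_g$ with holomorphic dependence on $g$. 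Finally, to conclude analyticity as a map into the \emph{Banach manifold} $\mathbf A_W$ I would use Morera's theorem / the weak-analyticity criterion: it suffices to check that $g\mapsto\hat g(z)$ is analytic for each fixed $z\in W$ and locally bounded, which follows from the above, and then pass to the chart of $\mathbf A_W$ described in Section~\ref{section: cyl ren}.

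The main obstacle I expect is step (i) together with the verification that the extended holomorphic motion genuinely conjugates the return maps, i.e.\ controlling the interplay between the Słodkowski extension (which is \emph{not} dynamically natural away from $\partial C_f$) and the first-return dynamics. One must be careful that $\hat g$ does not depend on the choice of extension, which is where Proposition~\ref{well-defined} is used: any two admissible families of crescents give the same renormalization, so the freedom in the quasiconformal extension is harmless. Making this precise — showing that the conformal class of the dynamical cylinder, hence $\hat g$, is determined by $g$ alone and varies holomorphically — is the technical heart of the argument; everything else (composition, Morera, the Banach-manifold chart) is routine.
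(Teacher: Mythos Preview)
The paper does not give its own proof of this proposition; it simply refers to Proposition~6.7 of \cite{Yam2019} (which in turn rests on the construction in \cite{Yam02}). Your outline is in the same spirit as those references: holomorphic motion of the crescent, holomorphic dependence of the uniformizing coordinate, analyticity of the return map via locally constant return times, and a weak-analyticity/Morera argument to upgrade pointwise to Banach-space analyticity.

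One point in your plan needs correction. You say that after S{\l}odkowski extension ``one then checks that this extended motion still conjugates the dynamics where it needs to, or rather that it conjugates the first-return maps $R_{C_f}$ and $R_{C_g}$''. This is false and, more to the point, unnecessary. The extended motion is equivariant \emph{only} on $\partial C_f$, where the original motion lived; in the interior it carries no dynamical meaning, and no amount of shrinking $G$ will change that. What you actually need is much weaker: the boundary equivariance alone ensures that $\psi_f\circ\chi_g^{-1}$ descends to a quasiconformal map from the $g$-quotient cylinder to $\CC/\ZZ$, so the Beltrami datum lives on a \emph{fixed} space and varies holomorphically in $g$, and Ahlfors--Bers produces $\psi_g$ with holomorphic dependence. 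The renormalization $\hat g=\psi_g\circ R_{C_g}\circ\psi_g^{-1}$ is then analytic in $g$ because each of the three factors is --- $\psi_g$ and $\psi_g^{-1}$ by the above, and $R_{C_g}$ because it is locally a fixed iterate $g^{n}$ on domains that move holomorphically. No conjugacy of return maps by $\chi_g$ is invoked, and Proposition~\ref{well-defined} plays no role here (it concerns independence of the choice of \emph{crescent}, not of the quasiconformal extension). Once you drop that red herring, your argument is correct and matches the literature proof.
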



\begin{proposition}
  \label{conj-maps}
  Let $f$ and $g$ be two bi-critical circle maps which are analytically conjugate
  $$\phi\circ f=g\circ \phi$$
  in a neighborhood $U$ of the circle. Suppose that $f$ is cylinder renormalizable with period $k=q_n$, and let $C_f$ be a corresponding fundamental crescent of $f$. Suppose $C_f\cup f(C_f)\Subset U$. Then, $g$ is also cylinder renormalizable with the same period and with a fundamental crescent  $C_g=\phi(C_f)$; and denoting $\hat f$ and $\hat g$ the corresponding cylinder renormalizations of $f$ and $g$ respectively, we have
  $$\hat f\equiv\hat g.$$
\end{proposition}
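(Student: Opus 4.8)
The plan is to transport the entire cylinder renormalization construction for $f$ over to $g$ via the conjugacy $\phi$, and then to verify that the two resulting renormalized maps literally coincide as germs on $\CC/\ZZ$. First I would check that the data defining cylinder renormalizability of $f$ — the repelling periodic points $p_1,p_2$ of period $k=q_n$, the arc $l$ connecting them, the neighborhood $N$ on which $f^k$ is univalent, and the point $c$ with $f^m(c)=0$ — all have well-defined images under $\phi$, and that these images satisfy the four bullet conditions of Definition~\ref{cylren} for $g$. Since $\phi\circ f=g\circ\phi$ on $U$ and $C_f\cup f(C_f)\Subset U$, we have $g^k\circ\phi=\phi\circ f^k$ wherever both sides make sense; thus $\phi(p_i)$ are repelling periodic points of $g$ of period $k$, $g^k(\phi(l))=\phi(f^k(l))$ is a simple arc meeting $\phi(l)$ only at $\{\phi(p_1),\phi(p_2)\}$, $g^k$ is univalent on $\phi(N)$, and $C_g:=\phi(C_f)$ is bounded by $\phi(l)$ and $g^k(\phi(l))$. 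The crescent condition — that the quotient of $(C_g\cup\phi(N)\cup g^k(\phi(N)))\sm\{\phi(p_1),\phi(p_2)\}$ by $z\sim g^k(z)$ is conformally a cylinder — follows because $\phi$ conjugates this quotient to the corresponding quotient for $f$, which is a cylinder by hypothesis; and $\phi(c)$ works as the marked point since $g^m(\phi(c))=\phi(f^m(c))=\phi(0)$, with $\phi(0)$ playing the role of the base point (one normalizes so that the renormalized map sends the image of the marked point to $0$).

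Next I would compare the first-return maps. The first-return map $R_{C_g}$ to $\bar C_g$ under $g$ satisfies $R_{C_g}\circ\phi=\phi\circ R_{C_f}$ on the domain of $R_{C_f}$: indeed, for $z\in\bar C_f$ with forward orbit returning to $\bar C_f$, the return time $n(z)$ for $f$ equals the return time for $g$ at $\phi(z)$, because $\phi$ is a homeomorphism carrying $\bar C_f$ to $\bar C_g$ and orbits to orbits. Hence on the quotient cylinders, which are conformally identified via the map induced by $\phi$, the return maps are intertwined. Now by construction $\hat f$ is the projection of $R_{C_f}$ to $\CC/\ZZ$ sending $c\mapsto 0$, and $\hat g$ the projection of $R_{C_g}$ sending $\phi(c)\mapsto 0$; the conformal isomorphisms used to identify the respective quotients with $\CC/\ZZ$ differ precisely by the biholomorphism induced by $\phi$ between the two cylinders. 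Since this induced biholomorphism is a conformal automorphism of $\CC/\ZZ$ fixing the image of the marked point (after the normalization $c\mapsto 0$, $\phi(c)\mapsto 0$), and since both renormalizations are further normalized by the same requirement of being real-symmetric multicritical circle maps (cf.\ Remark~\ref{rem:symmetry}), the automorphism is the identity, and therefore $\hat f\equiv\hat g$.

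The main obstacle I expect is the rigidity step at the very end: pinning down that the two a priori different choices of uniformization $\CC/\ZZ\to$ (quotient cylinder) differ by the identity rather than by a nontrivial automorphism $z\mapsto z+a$ or $z\mapsto -z+a$ of $\CC/\ZZ$. This is where one must use that the uniformizations are not arbitrary but are the ones implicit in Definition~\ref{cylren}: they send the marked point to $0$, they send the real trace to $\TT$ (so translations by a real constant are the only ambiguity, killed by the marked-point condition), and real-symmetry rules out the orientation-reversing automorphisms. I would also need to be slightly careful that the hypothesis $C_f\cup f(C_f)\Subset U$ is exactly what is needed so that $g^k\circ\phi=\phi\circ f^k$ holds on all of $N$ and on $C_f$ (each application of the conjugacy relation requires the relevant point and its image to lie in $U$), but this is routine bookkeeping once the containment is granted. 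Finally, strictly speaking one should invoke Proposition~\ref{well-defined} to know that the value $\hat g$ does not depend on which family of fundamental crescents one uses, so that ``$\hat g$'' in the statement is unambiguous; this lets us choose the specific crescent $C_g=\phi(C_f)$ without loss of generality.
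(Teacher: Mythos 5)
Your proposal is correct and follows essentially the same route as the paper: transport the crescent by $\phi$ (so $C_g=\phi(C_f)$ is a fundamental crescent by definition), observe that $\phi$ descends to a conformal isomorphism of the two quotient cylinders $\CC/\ZZ$ conjugating $\hat f$ to $\hat g$, and conclude by rigidity of automorphisms of $\CC/\ZZ$ (the paper invokes Liouville's theorem plus the fixed origin; you additionally note the real trace and orientation, which is a fair elaboration of the same step). No gap.
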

\begin{proof}
  The domain $C_g$ is a fundamental crescent by definition. Denote $C_f'$, $C_g'$ the closures of the two fundamental crescents minus the endpoints.
  The analytic conjugacy $\phi$ projects to an analytic isomorphism
  $$\hat\phi:C'_f\cup f^k(C'_f)/f^k\simeq\CC/\ZZ\longrightarrow C'_g\cup g^k(C'_g)/g^k\simeq\CC/\ZZ,$$
  which conjugates $\hat f$ to $\hat g$. By Liouville's Theorem, $\hat\phi$ is a translation; since it fixes the origin, it is the identity.
\end{proof}

\subsection{Cylinder renormalization operator}

The cylinder renormalization of a bi-cubic commuting pair is defined in the same way as that of an analytic bi-cubic map.:
\begin{definition}
Let $\zeta=(\eta,\xi)$ be a bi-cubic holomorphic commuting pair which is at least $n$ times renormalizable for some $n\in\NN$.
Denote $D_\eta$ the domain of the pair $\zeta$.
Denote $$\zeta_n=(\eta_n,\xi_n)\equiv p\cR^n(\zeta):D_{\eta_n}\to\CC.$$
We say that $\zeta$ is cylinder renormalizable with period $k=q_n$ if:
 \begin{enumerate}
\item there exist two compex conjugate fixed points $p^+_n$, $p^-_n$ of the map $\eta_n$ with $p^\pm_n\in\pm\HH$,
and an $\RR$-symmetric simple arc $l$ connecting these points such that $l\subset {D_{\eta_n}}$ and 
$\eta_n(l)\cap l=\{p^+_n,p^-_n\}$;
\item denoting $C$ the $\RR$-symmetric domain bounded by $l$ and $\eta(l)$ we have $C\subset D_{\eta_n}$,
and the quotient of $\overline{C\cup\eta(C)}\sm\{p^+_n,p^-_n\}$ 
by the action of $\zeta_n$ is a Riemann surface homeomorphic
to the cylinder $\CC/\ZZ$ (that is, $C$ is a fundamental crescent of $\eta$).
\end{enumerate}
Clearly, the action of $\zeta$ induces a bi-critical circle map $f$ with rotation number $\rho(\cR^n\zeta)$ on the quotient $$(\overline{C\cup\eta(C)}\sm\{p^+_n,p^-_n\})/\zeta_n\simeq \CC/\ZZ;$$
we call $f$ the cylinder renormalization of $\zeta$ with period $q_n$.

\end{definition}
Note that the existence of the two conjugated points required in  (1) is guaranteed for all holomorphic commuting pairs without fixed points in the real line (see e.g. \cite[Section 4.3]{GY}).

\begin{theorem}
  \label{th:hcp-cyl}
  Let $\zeta$ be any bi-cubic  commuting pair which extends to a holomorphic commuting pair $\cH_\zeta$. Then $\zeta$ is cylinder renormalizable with period $q_0=1$. Furthermore, for every $\mu>0$ there exists an equatorial annulus $U$ such that if $\cH_\zeta\in\hol(\mu)$, then the cylinder renormalization lies in $\mathbf A_U^\RR$.
\end{theorem}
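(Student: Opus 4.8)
The plan is to combine the existence of two complex-conjugate repelling fixed points of $\eta$ (guaranteed by the remark following the definition of cylinder renormalization, since a holomorphic commuting pair with irrational rotation number has no fixed point on the real line) with a compactness argument over $\hol(\mu)$ to get uniform control of the fundamental crescent. First I would note that for $q_0=1$ the ``renormalization'' $p\cR^0\zeta$ is just $\zeta$ itself, so the two fixed points $p^\pm$ of $\eta$ sit in $\pm\HH$, inside $\hat U$. I would then construct an $\RR$-symmetric simple arc $l$ joining $p^-$ to $p^+$ through $0$, staying inside $D_\eta\subset\hat U\cap\hat V$, chosen so that $\eta(l)$ meets $l$ only at $p^\pm$ and so that the region $C$ bounded by $l$ and $\eta(l)$ is a fundamental crescent (so that gluing $z\sim\eta(z)$ over $\overline{C\cup\eta(C)}\setminus\{p^\pm\}$ produces a copy of $\CC/\ZZ$). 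This is the kind of construction carried out for holomorphic pairs in \cite{Yam2019} and \cite{GY}; the point $0$ is itself in the domain of the first-return map $R_C$ (the ``$c$'' in the definition), so the last condition in the definition of cylinder renormalizability is automatic with $c=0$, $m=0$.

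The second and main step is uniformity. Having fixed the construction for one pair, I would argue that it can be performed with estimates depending only on $\mu$: all of the relevant data — the location of the fixed points $p^\pm$, the diameter of a neighborhood of $l$ on which $\eta$ is univalent, the size of $C$ and of $C\cup\eta(C)$, and the conformal modulus of the resulting cylinder annulus around the projected circle — vary continuously in the Carath\'eodory sense over $\hol^K(\mu)$, and by Lemma \ref{bounds compactness} this space is sequentially compact (the degree bound $K$ is harmless: for bi-cubic pairs one may take $K$ universal, and in any case limits of renormalizations of bi-cubic maps lie in a fixed $\hol^K(\mu)$ by Remark \ref{rem:symmetry}). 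Compactness then promotes the pointwise construction to a uniform one: there is a definite annulus $A_\mu\subset\CC/\ZZ$ containing $\TT$ such that the cylinder renormalization of any $\cH_\zeta\in\hol(\mu)$ is defined and holomorphic on $A_\mu$, with image in $\CC/\ZZ$ and $\TT$ mapped to a curve homotopic to $\TT$; taking $U$ to be a slightly smaller equatorial annulus $U\Subset A_\mu$ gives $f\in\mathbf A_U$, and real-symmetry of everything in the construction gives $f\in\mathbf A_U^\RR$.

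There is one technical wrinkle to address: the construction of $l$ involves a choice, and I would invoke Propositions \ref{ren open set} and \ref{well-defined} (and, for the continuous/Hausdorff dependence of crescents, Proposition \ref{conj-maps}) to see that the resulting cylinder renormalization does not depend on that choice and depends continuously on $\zeta$, so that the compactness argument can be run; the upper bound $\diam(\Delta_\cH)\le 1/\mu$ and the quasidisk bounds in Definition \ref{H_mu_def} are exactly what make the crescent and the univalence neighborhood of uniform size. The main obstacle I anticipate is precisely this uniform step — showing that $l$, $C$, and the univalence neighborhood $N$ can be chosen with $C\cup\eta(C)\Subset\Delta_\cH$ and with inner/outer radii bounded below purely in terms of $\mu$, rather than merely asserting existence for each individual pair; in other words, making the ``for every $\mu>0$ there exists $U$'' quantifier order genuine rather than just ``for every $\cH\in\hol(\mu)$ there exists $U$.'' Once continuity and compactness are in place this follows, but it is the crux of the argument and where the definitions of $\hol(\mu)$ are really used.
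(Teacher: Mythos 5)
Your overall route is the same as the paper's: the paper gives no self-contained argument for Theorem~\ref{th:hcp-cyl} but defers to Lemma~4.10 of \cite{GY}, whose proof is exactly the scheme you describe --- existence of the complex-conjugate fixed points $p^\pm$ of $\eta$ in $\pm\HH$ (guaranteed for holomorphic pairs without real fixed points, as noted after the definition of cylinder renormalization of pairs), construction of an $\RR$-symmetric fundamental crescent bounded by $l$ and $\eta(l)$, and a Carath\'eodory-compactness argument over $\hol^K(\mu)$ (with $K$ universal for bi-cubic pairs) to make the equatorial annulus $U$ depend only on $\mu$. Your identification of the uniformity step as the crux, and of the quasidisk/modulus bounds in Definition~\ref{H_mu_def} as the input that makes it work, is on target.

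There is one concrete error in your construction: the arc $l$ cannot pass through $0$. The point $0$ is a cubic critical point of the gluing map $\eta$, so $\eta$ is not univalent in any neighborhood of $0$; the identification $z\sim\eta(z)$ along an arc through $0$ is locally $3$-to-$1$ and the quotient fails to be a conformal cylinder there (for the map version of the definition this directly violates the requirement that $f^k$ be univalent on a neighborhood $N$ of $l$). The standard choice is to route $l$ through a regular real point, e.g.\ through $\xi(0)$, so that $\eta(l)$ crosses $\RR$ at $\eta(\xi(0))=\xi(\eta(0))\in I_\eta$ and the crescent contains the fundamental interval $[\eta(\xi(0)),\xi(0)]$ of the return dynamics of the pair; the critical point $0$ then lies in the \emph{interior} of the crescent and serves as the marked point sent to $0\in\CC/\ZZ$ by the uniformization. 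With that correction the rest of your argument goes through as in \cite{GY}.
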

The proof follows {\it mutatis mutandis} the proof of Lemma 4.10 of \cite{GY}, so we will not reproduce it here.

\noindent
Let $\cA_B$ be the attractor of renormalization for pairs with rotation numbers in $\cBT_B$ (Theorem~\ref{th:attractor}). Compactness considerations imply:

\begin{proposition}
  \label{renorm-operator}
    There exists an equatorial annulus $U$, $m\in\NN$, and a neighborhood $\cV=\cV(\cA_B)$ in Carath\'eodory topology such that the following holds:
  \begin{enumerate}
  \item every $\zeta\in \cV$ is cylinder renormalizable with period $1$     and the cylinder renormalization $f_\zeta\in {\mathbf A}_{U};$
  \item further, the analytic map $f_\zeta$ is cylinder renormalizable with period $l(f)=p_m/q_m$ (where $p_m/q_m$ is the $m$-th convergent of $\rho(f)$), which is locally constant in $\cV$; its cylinder renormalization $\hat f_\zeta$ is defined in $U_1\Supset U$.
    \end{enumerate}
\end{proposition}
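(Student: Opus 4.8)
\textbf{Proof proposal for Proposition \ref{renorm-operator}.}

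The plan is to deduce both items from the compactness of $\cA_B$ together with the openness statements for cylinder renormalization recorded in Section \ref{section: cyl ren}, applied uniformly along the compact attractor. First I would establish item (1). By Theorem \ref{th:attractor}, every $\zeta\in\cA_B$ lies in $\hol(\mu)$ for the fixed $\mu=\mu(B)$; by Theorem \ref{th:hcp-cyl}, each such $\zeta$ is cylinder renormalizable with period $q_0=1$, and there is a single equatorial annulus $U=U(\mu)$ (independent of $\zeta$, since $\mu$ is fixed) such that the cylinder renormalization $f_\zeta$ lies in $\mathbf A_U^\RR$. To upgrade this from points of $\cA_B$ to a Carath\'eodory neighborhood, I would cover $\cA_B$ by finitely many neighborhoods on which Proposition \ref{ren open set} applies: for each $\zeta_0\in\cA_B$ with fundamental crescent $C_{\zeta_0}$, that proposition gives an open set $G_{\zeta_0}$ on which the crescent moves continuously (via a holomorphic motion) and the renormalization lands in $\mathbf A_W$ for any $W\Subset U$. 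Shrinking $U$ slightly at the outset so that the target annulus can be taken to be this fixed $U$, and extracting a finite subcover $G_{\zeta_1},\dots,G_{\zeta_N}$ of the compact set $\cA_B$, the union $\cV=\bigcup_j G_{\zeta_j}$ is the desired neighborhood; Proposition \ref{well-defined} guarantees that the renormalization does not depend on which $G_{\zeta_j}$ (hence which crescent family) is used, so $f_\zeta$ is unambiguously defined on $\cV$. The analytic dependence $\zeta\mapsto f_\zeta$ follows from Proposition \ref{analytic dependence2}, though for the present statement only well-definedness and the target space are needed.

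For item (2), I would argue as follows. The map $\rho\mapsto$ (number of continued-fraction digits available, and the convergents $p_m/q_m$) is locally constant on the set of irrationals in $\cBT_B$ in the sense that for each $\zeta_0\in\cA_B$ there is $m=m(\zeta_0)$ so that the $m$-th convergent of $\rho(f_{\zeta_0})$ is well-defined; by Proposition \ref{prop cylren1}, $f_{\zeta_0}$ (which has rotation number $\rho(\zeta_0)$ since $q_0=1$) is cylinder renormalizable with period $q_m$ provided one produces the required repelling periodic points $p_1,p_2$ and arc $l$, and the resulting $\hat f_{\zeta_0}$ is analytically conjugate to $\cR^{m+1}\zeta_0\in\cA_B$. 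Since $\cA_B\subset\hol(\mu)$ and the renormalizations have a priori bounds, the needed periodic points and crescents exist with uniform geometry; concretely, I would fix a suitably large $m$ so that $\cR^{m+1}\zeta_0$ is deep enough inside $\hol(\mu)$, apply Theorem \ref{th:hcp-cyl} / the construction behind Proposition \ref{prop cylren1} to get a crescent for $f_{\zeta_0}$, and then invoke Proposition \ref{ren open set} once more — now with $f_{\zeta_0}\in\mathbf A_U$ in the role of ``$f$'' — to obtain an open neighborhood of $f_{\zeta_0}$ in $\mathbf A_U$ on which the period-$q_m$ cylinder renormalization is defined, depends continuously on the map, and lands in $\mathbf A_{U_1}$ for a fixed larger annulus $U_1\Supset U$. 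Pulling this neighborhood back by the (continuous, indeed analytic) map $\zeta\mapsto f_\zeta$ and intersecting with $\cV$, then taking a finite subcover of $\cA_B$ and shrinking $\cV$ accordingly, yields the claim; local constancy of $l(f)=p_m/q_m$ on $\cV$ holds because the period of a cylinder renormalization, being the period of the repelling periodic points used to build the crescent, is stable under the holomorphic motion of Proposition \ref{ren open set}, and because the symbolic combinatorics is constant on the compact $\cR$-invariant family.

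The one subtlety I would be careful about — and which I expect to be the main obstacle — is the interplay of the two ``shrinking'' steps: the annulus $U$ in item (1) must be chosen small enough that the a priori bounds of Theorem \ref{th:bounds} force $f_\zeta\in\mathbf A_U$ uniformly over $\cA_B$ (this is exactly the content of Theorem \ref{th:hcp-cyl}, so it is available), but then in item (2) one needs $C_{f_\zeta}\cup f_\zeta(C_{f_\zeta})\Subset U$ in order to apply Proposition \ref{ren open set} and Proposition \ref{conj-maps} for the second renormalization; verifying that a single choice of $U$ (and of $m$, and of $U_1$) works simultaneously for all $\zeta$ in a neighborhood of the compact attractor is where the compactness of $\cA_B$ is genuinely used, via a finite-subcover argument, rather than a soft pointwise statement. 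I would also note that $m$ can be taken uniform over $\cA_B$: by the density of periodic orbits (Proposition \ref{rem:per-dense}) and the expanding cocycle \eqref{eq:cocycle}, it suffices to choose $m$ large relative to $B$, after which the required depth of renormalization is guaranteed for every pair in $\cA_B$. Everything else is a routine compactness packaging of the open-ness results already established in this section.
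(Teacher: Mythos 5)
Your proposal is correct and matches the paper's intent exactly: the paper offers no written proof beyond the phrase ``Compactness considerations imply,'' and what it has in mind is precisely your packaging of Theorem~\ref{th:attractor}, Theorem~\ref{th:hcp-cyl}, and the openness/well-definedness statements (Propositions~\ref{ren open set}, \ref{well-defined}, \ref{prop cylren1}) via a finite subcover of the compact attractor. Your closing remark about the interplay of the two shrinking steps (choosing $U$, $m$, $U_1$ uniformly) correctly identifies the only place where compactness is used non-trivially.
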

Let us fix $B\in\NN$. By complex {\it a priori} bounds, and in view of the above discussion,
we have the following:
\begin{proposition}
  \label{prop:cren}
There exist $m=m(B)\in\NN$, $\mu=\mu(B)>0$, an $\RR$-symmetric  disk $\Delta\subset \CC$, and an equatorial $\RR/\ZZ$ symmetric annulus $U$ such that for every multicritical circle map $f\in{\mathbf A}_U$ with either two cubic or one degree nine critical points such that 
   there are at least $m+1$ terms in the continued fraction expansion of $\rho(f)$ all bounded by $B$
  the following holds:
\begin{enumerate}
\item   $\cR^{m-1}f$ extends to a holomorphic pair $\cH_f\in\hol(\mu)$ with range $\Delta$;
\item $\cH_f$ is cylinder renormalizable with period $1$, and the corresponding cylinder renormalization $g_f\in\mathbf A_{\tl U}$ with $\tl U\Supset U$.
\end{enumerate}
\end{proposition}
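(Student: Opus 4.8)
The plan is to combine the complex a priori bounds for bi-cubic pairs (Theorem~\ref{th:bounds}) with the compactness of the attractor $\cA_B$ (Theorem~\ref{th:attractor}) and the cylinder renormalization machinery of Theorem~\ref{th:hcp-cyl} and Proposition~\ref{renorm-operator}, together with the finiteness of the relevant combinatorics when all continued fraction digits are $\le B$. First I would observe that a multicritical circle map $f$ as in the statement, having at least $m+1$ continued fraction digits bounded by $B$, is $(m-1)$-times renormalizable in the sense of pairs, and $\cR^{m-1}f$ is a bi-cubic commuting pair (or a degree-nine unicritical one in the collision case). By Theorem~\ref{th:bounds} applied to such a pair, $\cR^{m-1}f$ extends to a holomorphic commuting pair $\cH_f$ lying in $\hol(\mu)$ for a $\mu=\mu(B)>0$ that depends only on $B$; since $\hol(\mu)$ constrains the diameter of the range, after an affine normalization one may take the range to be a fixed $\RR$-symmetric disk $\Delta$ depending only on $B$. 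This gives item (1).

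For item (2), Theorem~\ref{th:hcp-cyl} applies to $\cH_f\in\hol(\mu)$: it is cylinder renormalizable with period $q_0=1$, and moreover the theorem produces an equatorial annulus $U$, depending only on $\mu$ (hence only on $B$), such that the resulting cylinder renormalization $g_f$ lies in $\mathbf A_U^\RR$. Here one uses Remark~\ref{rem:symmetry}: since $\cR^{m-1}f$ is a genuine iterate of renormalization of a bi-cubic map, both components of the underlying pair have the cubic symmetry $\eta=\phi_\eta(z^3)$, $\xi=\phi_\xi(z^3)$, so $g_f$ is again a multicritical circle map of the correct type. The fact that $U$ can be taken uniformly over all such $f$ is exactly the content of the second sentence of Theorem~\ref{th:hcp-cyl}, which gives the annulus $U=U(\mu)$; that the image annulus $\tl U$ with $\tl U\Supset U$ exists and can likewise be chosen uniformly follows from Proposition~\ref{ren open set} (the cylinder renormalization is defined on an annulus compactly containing the original domain) combined once more with the compactness of $\hol(\mu)$ from Lemma~\ref{bounds compactness}. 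Finally, the choice of the integer $m=m(B)$ is dictated by the requirement that $m-1$ renormalizations bring us into the range of validity of the a priori bounds of Theorem~\ref{th:realbds} and Theorem~\ref{th:bounds} uniformly; since all digits are $\le B$, the combinatorial possibilities for the first $m$ digits form a finite set, and one takes $m$ at least as large as the $n_0$ appearing in Theorems~\ref{th:realbds}.

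The main obstacle, and the only point requiring genuine care, is the \emph{uniformity} of all the constants $m$, $\mu$, $\Delta$, $U$, $\tl U$ over the whole family of admissible $f$. Each ingredient — real bounds, complex bounds, the passage from a holomorphic pair to a cylinder renormalization — is a priori stated for a single map or comes with constants depending on the map; the work is to show that restricting to digits bounded by $B$ collapses these to finitely many cases and hence to uniform constants. I would handle this by a standard compactness argument: if no uniform $U$ existed, one could extract a sequence of admissible maps $f_j$ whose cylinder renormalizations fail to lie in any fixed annulus, pass to a Carath\'eodory-convergent subsequence of the holomorphic pairs $\cH_{f_j}\in\hol(\mu)$ using Lemma~\ref{bounds compactness}, apply Theorem~\ref{th:hcp-cyl} and the continuity of the cylinder renormalization construction in Propositions~\ref{ren open set} and~\ref{well-defined} to the limit, and derive a contradiction. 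Once the uniformity is in place, items (1) and (2) are just the restatement of the theorems quoted above, so the proposition follows.
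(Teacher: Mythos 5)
The paper offers no written proof of this proposition beyond the one-line preamble ``By complex \emph{a priori} bounds, and in view of the above discussion,'' and your proposal is precisely an elaboration of that intended argument: Theorem~\ref{th:bounds} for item (1), Theorem~\ref{th:hcp-cyl} together with the compactness of $\hol(\mu)$ (Lemma~\ref{bounds compactness}) and Propositions~\ref{ren open set}--\ref{well-defined} for item (2) and the uniformity of $U$, $\tl U$. This matches the paper's approach; the only point worth noting is that Theorem~\ref{th:bounds} is stated for infinite bounded-type rotation numbers while the proposition allows only $m+1$ bounded digits, a standard finite-depth version of the bounds that the paper itself also leaves implicit.
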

Let us denote \begin{equation}
  \label{eq:crenop}
  g_f=\cren f,
\end{equation}
and call it the {\it cylinder renormalization} of $f$. For every $\zeta \in\mathcal A_B$, denote $f_\zeta$ the cylinder renormalization of $\zeta$ with period $1$ in $\mathbf A_U$, and let $$\tl{\mathcal A}_B=\{f_\zeta\text{ for }\zeta\in\mathcal A_B\}.$$ Clearly, the cylinder renormalization operator $\cren$ is an analytic operator on an open neighborhood of $\tl{\mathcal A}_B$ in $\mathbf A_U$.

As noted in the introduction, the attractor of renormalization $\tilde\cA_B$ is not going to be uniformly hyperbolic under the action of $\cren$. Indeed,
$\tilde\cA_B$ contains unicritical circle maps with a critical point of order $9$.  For these orbits of $\cren$ the two combinatorial conjugacy  invariants  collapse into one. In the next section we will define a different Banach manifold, into which the dynamics of $\cren$ will conveniently embed -- and in which the main result will hold.

\section{The space of triples}
\label{sec:renortriples}
Let us introduce several more Banach spaces. For an equatorial annulus $A\subset \CC/\ZZ$, we will let $\mathbf D_A$ be the space of all bounded holomorphic functions from $A\to\CC$. If $\tl A\subset\CC$ is the lift of $A$, then $\mathbf D_A$ can be identified with the space of $1$-periodic bounded holomorphic functions in $\tl A$. We will equip the latter with the sup norm, which turns $\mathbf D_A$ into a Banach space.

Similarly, for a domain $U\subset \CC$, we let $\mathbf G_U$ denote the space of bounded holomorphic functions $U\to\CC/\ZZ$, again with the natural Banach structure. Finally, $\mathbf B_U$ will denote the standard Banach space of bounded holomorphic functions from a domain $U\subset\CC$ to $\CC$ with the sup norm.

\noindent
Let $\mathbf q(x)=x^3$.

\noindent
\begin{definition}
  Let $\cU=(U_1, U_2,U_3)$ be an ordered triple of an $\RR/\ZZ$-symmetric equatorial annulus $U_1$ and two $\RR$-symmetric domains $U_2$, $U_3$ in $\CC$.
  We denote
$\mathbf C_\cU$ the space of triples $(\phi_1,\phi_2,\phi_3)$ with $\phi_1 \in \mathbf D_{U_1}$, $\phi_2\in{\mathbf B}_{U_2}$, $\phi_3\in\mathbf G_{U_3}$ such that:
  \begin{itemize}
  \item $\mathbf q\circ \phi_1(U_1)\Subset U_2$ and $\mathbf q\circ \phi_2\circ \mathbf q\circ \phi_1(U_1)\Subset U_3$;
    \item $\phi_1(0)=0$;
      \item each of the maps $\phi_i$ is locally conformal in a neighborhood of every real point of its domain of definition.
    \end{itemize}

\end{definition}
Clearly, $\mathbf C_\cU$ is an open subset of ${\mathbf D}_{U_1}\times {\mathbf B}_{U_2}\times {\mathbf G}_{U_3}$, and thus possesses a natural product Banach manifold structure. It is real-symmetric, and we denote $\curr$ its real slice, consisting of triples of real-symmetric maps. Denote  the composition
\begin{equation}
  \label{cyl-map}
g_{(\phi_i)}\equiv \phi_3\circ\mathbf q\circ \phi_2\circ \mathbf q\circ \phi_1:U_1\to \CC/\ZZ.
  \end{equation}
We note:
\begin{proposition}
  \label{projection1}
  The correspondence $\Gamma:\mathbf C_\cU\to {\mathbf A}_{U_1}$, given by
  $$(\phi_i)\mapsto g_{(\phi_i)}$$
  is analytic.
  
\end{proposition}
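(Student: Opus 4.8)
The plan is to show that $\Gamma$ is the restriction of a bounded (hence analytic) multilinear-type map, or more precisely that it is a composition of basic analytic operations on Banach spaces: composition with a fixed holomorphic map, and composition of two variable holomorphic maps. Recall the standard fact (see e.g. the discussion of composition operators in \cite{Yam02, GY}) that if $V\Subset W$ are domains, then the evaluation/composition map
\begin{equation*}
  \mathbf B_W\times \{\psi:\psi(V)\Subset W\}\to \mathbf B_V,\qquad (\Phi,\psi)\mapsto \Phi\circ\psi
\end{equation*}
is analytic on its (open) domain of definition, because it is continuous and separately analytic, or alternatively because its derivative is computed explicitly via the chain rule and is bounded on compact subsets by Cauchy estimates. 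The same holds with the target $\CC$ replaced by $\CC/\ZZ$ for the space $\mathbf G$, working on lifts, since the relevant maps are $1$-periodic.

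Concretely, I would factor $\Gamma$ as follows. First, $(\phi_1,\phi_2,\phi_3)\mapsto \mathbf q\circ\phi_1$ is analytic from $\mathbf C_\cU$ into the space of holomorphic maps $U_1\to U_2$ (composition with the fixed entire map $\mathbf q$ is a bounded polynomial map of degree $3$, hence analytic, and the inclusion condition $\mathbf q\circ\phi_1(U_1)\Subset U_2$ is exactly the openness condition defining $\mathbf C_\cU$). Next, $(\psi,\phi_2)\mapsto \phi_2\circ\psi$ is analytic where $\psi$ ranges over maps with $\psi(U_1)\Subset U_2$ and $\phi_2\in\mathbf B_{U_2}$; composing with $\mathbf q$ again and then with $\phi_3\in\mathbf G_{U_3}$ in the same way, and using that the defining inclusions of $\mathbf C_\cU$ guarantee we stay in the domains where each composition operator is analytic, we obtain that $g_{(\phi_i)}=\phi_3\circ\mathbf q\circ\phi_2\circ\mathbf q\circ\phi_1$ depends analytically on $(\phi_i)$. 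Finally I must check that the image actually lies in $\mathbf A_{U_1}$, i.e. that $g_{(\phi_i)}(\TT)$ is homotopic to $\TT$: this is where real-symmetry and the normalization $\phi_1(0)=0$ enter — each $\phi_i$ is an orientation-preserving local diffeomorphism near $\RR$, hence $g_{(\phi_i)}|_\TT$ is a degree-one circle map, so its graph is homotopic to $\TT$; and $\check g_{(\phi_i)}-\operatorname{Id}$ is $1$-periodic and bounded, so it lies in $\tl{\mathbf A}_{U_1}$. Thus $\Gamma$ indeed maps into $\mathbf A_{U_1}$, and in the Banach-manifold chart on $\mathbf A_{U_1}$ it is given by $(\phi_i)\mapsto \check g_{(\phi_i)}-\operatorname{Id}$, which is the composite of the analytic map above with the (affine, hence analytic) subtraction of $\operatorname{Id}$.

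The main obstacle, and the only point requiring genuine care, is the analyticity of the composition operators in the correct topologies — in particular verifying that the openness conditions built into the definition of $\mathbf C_\cU$ (the relations $\Subset$) are precisely what is needed so that a small perturbation of $(\phi_i)$ keeps all intermediate images compactly inside the next domain, giving room for Cauchy estimates that bound the derivatives of the composition operators. Once this is set up, analyticity is automatic: a map between complex Banach spaces that is locally bounded and Gateaux-holomorphic (equivalently, analytic along every complex line) is analytic, and each composition operator is easily seen to be Gateaux-holomorphic by differentiating under the composition. I would phrase the final write-up by first isolating a lemma stating analyticity of $(\Phi,\psi)\mapsto\Phi\circ\psi$ on the appropriate open set, then assembling $\Gamma$ from four applications of it interspersed with the fixed maps $\mathbf q$ and the identification with the chart on $\mathbf A_{U_1}$.
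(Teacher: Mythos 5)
The paper offers no proof of Proposition~\ref{projection1} at all --- it is introduced with ``We note:'' and treated as a standard fact --- so there is nothing to compare line by line; what matters is whether your argument is sound, and in substance it is. Your factorization of $\Gamma$ into the bounded cubic polynomial map $\phi\mapsto\mathbf q\circ\phi$ alternated with composition operators $(\Phi,\psi)\mapsto\Phi\circ\psi$, each analytic on the open set cut out by the compact containments $\mathbf q\circ\phi_1(U_1)\Subset U_2$ and $\mathbf q\circ\phi_2\circ\mathbf q\circ\phi_1(U_1)\Subset U_3$ (local boundedness plus Gateaux holomorphy via Cauchy estimates), is exactly the standard mechanism used for such statements in \cite{Yam02} and \cite{GY}, and is surely the intended proof. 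The one step you should rephrase is the verification that $g_{(\phi_i)}$ lands in $\mathbf A_{U_1}$: your degree-one argument appeals to real-symmetry and orientation-preservation on $\TT$, but the proposition concerns all of $\mathbf C_\cU$, which is a complex Banach manifold whose generic points are not real-symmetric, so $g_{(\phi_i)}|_\TT$ is not a circle map there. The clean fix is to note that the homotopy class of $g_{(\phi_i)}|_\TT$ in $\CC/\ZZ$ is locally constant in $(\phi_i)$ (so $\mathbf A_{U_1}$ is open and closed in the ambient space of bounded holomorphic maps $U_1\to\CC/\ZZ$), and hence the degree condition need only be checked on the real-symmetric triples arising as lifts $\Lambda(f)$, then propagated by connectedness; with that adjustment, which is presentational rather than substantive, your proof is complete.
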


We will  interpret cylinder renormalization as an operator acting on triples in the real slice $\mathbf C_\cU^\RR$. 

Now, let $f$ be in the domain of $\cren$ (see Proposition~\ref{prop:cren}) and set $(\eta,\xi)=\cR^nf$. The map $\eta$ is a branched covering of a domain $W_\eta\to\Delta$ of degree $9$. Hence, it uniquely decomposes as
$$\eta=\psi_3\circ \mathbf q\circ \psi_2\circ \mathbf q\circ\psi_1,$$
where $\psi_i$ are conformal maps onto the images; each partial composition is a branched covering; the domain of $\psi_1$ is $W_\eta$, the range of $\psi_3$ is $\Delta$. Now let $$H:C_f\to \CC/\ZZ$$ be the uniformizing coordinate, and let
$(\phi^f_1,\phi^f_2,\phi^f_3)$ be the triple $$(\psi_1\circ H^{-1},\psi_2,H\circ \psi_3)\in\mathbf C_\cU$$ where $\cU=(U_1,U_2,U_3)$ with $U_1=U$, and the domains $U_2$, $U_3$ appropriately and uniformly chosen. We set
$$\Lambda (f)=(\phi^f_1,\phi^f_2,\phi^f_3),$$
and refer to it as a {\it lift} of $f$ to the space of triples.
Note that from definition, the lift of  cylinder maps to the space of triples is injective.

With the definition of the lift at hand, we can define renormalization in a appropriate subspace of the triples space $\mathbf C_\cU^\RR$. By definition of $\cren$ (see equation \eqref{eq:crenop}), Lemma \ref{prop:cren} and Proposition \ref{projection1}, we have that there exists a neighborhood $\mathcal{V}_{B}$ of $\hat{\mathcal A}_B\equiv \Lambda(\widetilde{{\mathcal A}}_B)$ such that,  the operator given by
\begin{equation}
\begin{split}
\hat\cR \colon \mathcal{V}_B &\to  \mathbf C_\cU \\
\mathbf v & \mapsto  \Lambda(\cren(\Gamma(\mathbf v))),
\end{split}
 \label{eq:hatcr}
\end{equation}
is well defined and it is analytic. 
We will prove our Main Theorem for the renormalization operator $\hat\cR$ defined in \eqref{eq:hatcr}, and acting in the  Banach manifold $\mathcal{V}_B$. Note that, similarly to cylinder renormalization, $\hat\cR$ naturally extends to a complex-analytic and real-symmetric transformation.


\subsection{Maps with periodic rotation numbers}

We will now specialize to real-symmetric maps, and consider the real slice $\curr$ of the space $\bcu$ consisting of maps with real symmetry, with the induced real Banach manifold structure (cf. \cite{Yam02,GY}). As seen from Proposition~\ref{rem:per-dense}, periodic orbits of $\hat \cR$ are dense in $\hat{\mathcal A}_B$; let us denote the set of these orbits by $\hat{\mathcal P}_B$. Let $\fxpt\in\hat{\mathcal P}_B$ be a periodic point of $\hat\cR$ with period $p\in\NN$. Recall that we denote by $l=q_m$ the period of $\hat\cR$.
Let us set $$f_*=\Gamma(\fxpt),$$
and denote $\rho_*=\rho(f_*)$ and let $\cC$ be the signature of $f_*$.
We 
 define $$D_*=\{ \mathbf v\in\curr,\text{ such that }\rho(\Gamma(\mathbf v))=\rho_*\},$$
and
$$S_*=\{\mathbf v\in\curr,\text{ such that }\cC(\Gamma(\mathbf v))=\cC\}\subset D_*.$$
As shown in \cite{ESY}, all the maps in $S_*$ are $C^{1+\alpha}$-conjugate with each other, with $\alpha=\alpha(B)$.

\noindent
We will see that the local stable set of $\fxpt$ is a submanifold of $\curr$:
\begin{theorem}
\label{stable manifold}
There is an open neighborhood $W\subset \curr$ of $\fxpt$ such that $S_*\cap W$
is an analytic submanifold of $\curr$ of codimension $2$.
It is the local stable manifold of $\hat \cR$ at $\fxpt$: for every $\bfv\in S_*\cap W$ the iterates
$$\hat\cR^{pk}\bfv\underset{k\to\infty}{\longrightarrow}\fxpt\text{ at a geometric rate.}$$
Moreover, for $\mathbf v\in S_*$ denote $f=\Gamma(\mathbf v)$, and let $\phi_f$ be the unique
smooth conjugacy between $f$ and $f_*$ which fixes $0$. Then the dependence $\mathbf v\mapsto \phi_f$ is analytic.
\end{theorem}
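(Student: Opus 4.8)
The plan is to deduce Theorem~\ref{stable manifold} from the general hyperbolic-fixed-point machinery for the operator $\hat\cR$, following the template established by the second author for unicritical cylinder renormalization in \cite{Yam02} (and its bi-cubic extension in \cite{Yam2019}), with the essential new input being that in the triples space $\curr$ the unstable cone field no longer collapses at degree-nine points. Concretely, the argument will have three components: first, identifying $S_*$ with a level set of the signature cocycle; second, showing that the linearization $D\hat\cR(\fxpt)$ (more precisely of the $p$-fold iterate $\hat\cR^p$, which fixes $\fxpt$) is hyperbolic with exactly a two-dimensional unstable subspace and a codimension-two stable subspace, using compactness of $\hat\cR$ to control the spectrum; and third, invoking the analytic stable manifold theorem to produce the submanifold $W\cap S_*$ and then bootstrapping the convergence $\hat\cR^{pk}\bfv\to\fxpt$ into analytic dependence of the conjugacy $\phi_f$.

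First I would establish that $S_*$ is locally a codimension-two analytic submanifold, \emph{independently} of dynamics, by exhibiting an analytic submersion whose zero set is $S_*$. The signature $\cC(\Gamma(\bfv))=(\rho(\Gamma(\bfv)),\delta(\Gamma(\bfv)))$ depends analytically on $\bfv$ near $\fxpt$: the rotation number is locally constant on $D_*$ by a priori bounds and the structural stability of the combinatorics (Proposition~\ref{ren open set} and Proposition~\ref{renorm-operator} show the relevant periodic data is locally constant), while $\delta$—the $\mu_f$-measure of the arc between the critical points—varies analytically because the conjugacy to the rotation depends analytically on the map by the exponential contraction of renormalization (Theorem~\ref{theorem:convergencerenor}) combined with holonomy/holomorphic-motion arguments. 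Thus $D_*\cap W$ is a codimension-one analytic submanifold and $S_*\cap W = \{\delta=\delta_*\}\cap D_*\cap W$ is codimension two, provided the differential of $(\rho,\delta)$ has rank $2$ at $\fxpt$; that non-degeneracy is exactly what the expanding cocycle \eqref{eq:cocycle} guarantees at the linearized level, since its derivative at a periodic signature is hyperbolic with two-dimensional expanding part.

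Second, and this is where the bulk of the work lies, I would analyze $L=D\hat\cR^p(\fxpt)$. Because $\cren$ (hence $\hat\cR$, via Proposition~\ref{projection1} and the lift $\Lambda$) factors through the cylinder construction, which involves composing with inverse branches of high iterates of an expanding-in-the-appropriate-sense map, $L$ is a compact operator on the Banach space underlying $\curr$; this is the analogue of the compactness of cylinder renormalization proved in \cite{Yam02}. Compactness means the spectrum of $L$ is discrete outside $0$, so hyperbolicity reduces to a spectral-gap statement: no eigenvalue on the unit circle, exactly two eigenvalues (counted with multiplicity) outside the closed unit disk. The unstable eigenvalues are produced by projecting onto the signature coordinates: the quotient dynamics of $\hat\cR$ on signatures is the cocycle \eqref{eq:cocycle}, whose linearization at the periodic signature $\cC$ has both eigenvalues of modulus $>1$ (the $\rho$-direction by the Gauss-map expansion, the $\delta$-direction by the $1/\rho$ factor), giving a two-dimensional $L$-invariant unstable subspace transverse to $T_{\fxpt}S_*$. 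Contraction on $T_{\fxpt}S_*$—i.e. that $L$ restricted to the tangent space of $S_*$ has spectral radius $<1$—is the true core of the theorem; I would derive it from Theorem~\ref{theorem:convergencerenor}, which already gives that maps with the same signature have renormalizations converging geometrically in the Carath\'eodory sense, transporting this to geometric convergence in the $\curr$-norm via the lift $\Lambda$ and the analyticity/compactness of $\hat\cR$ (a small-orbits-are-contracted argument: a bounded forward orbit of a compact operator that converges to the fixed point must lie in the stable subspace, forcing $\operatorname{spec}(L|_{T_{\fxpt}S_*})\subset\{|z|<1\}$, after ruling out neutral directions inside $S_*$, which again follows from uniqueness of the conjugacy in $S_*$). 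Absence of neutral eigenvalues of $L$ altogether then follows by combining these two facts with the direct-sum decomposition of the tangent space into the (two-dimensional, locally constant-combinatorics) transversal to $D_*$ plus the tangent to $D_*$, itself split by the $\delta$-direction and $T S_*$.

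Finally, with $L$ hyperbolic, the analytic stable manifold theorem for a compact analytic operator (as used in \cite{Yam02,Yam2019}) yields an analytic local stable manifold $W^s_{\mathrm{loc}}(\fxpt)$ of codimension two on which $\hat\cR^{pk}\to\fxpt$ geometrically. To finish I identify $W^s_{\mathrm{loc}}(\fxpt)$ with $S_*\cap W$: the inclusion $S_*\cap W\subset W^s_{\mathrm{loc}}$ follows from Theorem~\ref{theorem:convergencerenor} (pairs with the same signature renormalize together, hence their lifts converge to $\fxpt$, hence they lie on the stable manifold), while the reverse inclusion follows because the signature is locally constant on $W^s_{\mathrm{loc}}$—any map on the stable manifold is asymptotic to $\fxpt$, whose signature is $\cC$, and the signature is a conjugacy invariant preserved along renormalization orbits in the limit—and both manifolds have the same codimension $2$, so they coincide near $\fxpt$. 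For the last assertion, given $\bfv\in S_*$ write $f=\Gamma(\bfv)$; the conjugacy $\phi_f$ between $f$ and $f_*$ is assembled from the renormalization conjugacies at each level glued by the dynamics, and the geometric convergence $\hat\cR^{pk}\bfv\to\fxpt$ combined with analytic dependence of each finite renormalization step on $\bfv$ (Proposition~\ref{analytic dependence2}) makes the infinite product defining $\phi_f$ converge uniformly and analytically in $\bfv$, giving analyticity of $\bfv\mapsto\phi_f$. The main obstacle is the contraction estimate on $T_{\fxpt}S_*$: translating the Carath\'eodory-metric exponential convergence of Theorem~\ref{theorem:convergencerenor} into a genuine spectral bound $<1$ for $L$ on the Banach space $\curr$ requires the compactness of $\hat\cR$ to upgrade "all bounded orbits converge" to "the stable subspace is all of $T S_*$ and the spectrum there is inside the open unit disk," and it must be done carefully so as not to pick up a neutral direction hiding inside $D_*$.
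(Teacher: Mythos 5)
Your overall architecture (compactness of $\cL$, spectral analysis, stable manifold theorem) matches the paper's, but two of your key steps do not work as stated, and they are precisely the places where the paper has to do real work.

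First, $S_*$ is \emph{not} the zero set of an analytic submersion given by the signature, because the rotation number $\bfv\mapsto\rho(\Gamma(\bfv))$ is not an analytic (nor even differentiable) function on $\curr$: it is a continuous, mode-locked, devil's-staircase--type function. The same objection applies to $\delta$, which is defined through the Yoccoz conjugacy and the invariant measure. Consequently you cannot get the codimension-two manifold structure of $S_*$ "independently of dynamics" from the rank of $D(\rho,\delta)$, and the heuristic that the two unstable eigenvalues are "the linearization of the cocycle \eqref{eq:cocycle}" has no literal meaning in the tangent space of the Banach manifold. The paper instead approximates: $D_*$ is obtained as a $C^1$ limit of the analytic codimension-one manifolds $D_k$ (defined by the algebraic condition that $0$ is $q_k$-periodic), using the cone field $\cE$, Hahn--Banach and Alaoglu to control the limiting tangent hyperplane (Theorem~\ref{stable1}); and the stable manifold inside $D_*$ is obtained as a limit of the manifolds $S_n$ of maps whose $n$-th renormalization is unicritical, with the second expanding direction produced by an explicit localized "bump" perturbation of $\phi_2$ (Theorem~\ref{expdir1}), not by any quotient to signature space.

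Second, your elimination of neutral spectrum is insufficient. Theorem~\ref{theorem:convergencerenor} is a statement about \emph{real} pairs with equal signature converging in the Carath\'eodory metric; it does not bound the spectral radius of the complexified operator $\cL$ on a complement of the unstable subspace, and "a bounded forward orbit converging to the fixed point must lie in the stable subspace" does not rule out a neutral eigenvalue (or Jordan block) whose eigendirection is transverse to every $S_n$ yet tangent to $D_*$, nor a genuinely complex neutral eigenvector invisible to the real slice. The paper first shows there are \emph{at most} two non-stable directions (Theorem~\ref{expdir2}, by contradiction using $W^{\text{ss}}_{\text{loc}}\subset S_*$ versus $D_*\not\subset S_*$), and then kills neutral spectrum by Lyubich's argument (Theorem~\ref{expdir3}): complexify, apply the Slow Small Orbits Theorem to produce an orbit off the strong stable manifold, extract complex \emph{a posteriori} bounds and invariant quasicircles, pass to a bi-infinite tower, and invoke Tower Rigidity. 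Without this (or an equivalent mechanism) your spectral gap is unproved. Your final identification of $W^s_{\text{loc}}$ with $S_*\cap W$ and the analyticity of $\bfv\mapsto\phi_f$ are fine modulo these two gaps.
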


The proof of this theorem will be completed in the following section. We begin working on it by describing the manifold structure of $D_*$.

\noindent
Let us define a cone field $\cE$ in the tangent bundle of $\bcu$ given by the condition:
$$\cE=\{\nu\in T\bcu|\; \inf_{x\in\RR}D\Gamma \nu (x)>0\}.$$
We first show that:
\begin{theorem}\label{stable1}
  The set $D_*$ is a local submanifold of $\curr$ of codimension $1$. For every $\mathbf v\in D_*$, the intersection
  $T_{{\mathbf v}}D_*\cap \cE=\emptyset$.
  \end{theorem}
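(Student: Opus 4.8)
The plan is to exhibit $D_*$ locally as the zero set of a submersion built from the rotation number functional, and then to rule out tangency with the cone field $\cE$ using the fact that moving along $D_*$ corresponds to deforming $f=\Gamma(\mathbf v)$ through maps with the \emph{same} rotation number $\rho_*$, i.e. through topologically conjugate maps.

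First I would reduce to the analytic circle map $f=\Gamma(\mathbf v)\in\mathbf A_{U_1}$ via the analytic projection $\Gamma$ (Proposition~\ref{projection1}). The key input is the regular dependence of the rotation number near $f_*$. Since $\rho_*=\rho(f_*)$ is a bounded-type (hence Diophantine) irrational and $f_*$ is analytically linearizable by Yoccoz's theorem, one has a standard ``transverse'' description: there is an analytic functional $\mathcal{N}$, defined on a neighborhood of $f_*$ in $\mathbf A_{U_1}$, whose level set $\{\mathcal N=\mathcal N(f_*)\}$ coincides locally with $\{\rho(\cdot)=\rho_*\}$, and which is a submersion there — this is exactly the mechanism that gives the codimension-one stable lamination for the unicritical cylinder renormalization in \cite{Yam02} and that was used in \cite{Yam2019}. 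Concretely, $\mathcal N$ can be taken to record the ``translation part'' detected by the linearizing coordinate, or equivalently built from the fact that $f\mapsto \rho(f)$ is $C^1$ (even analytic along transversals) at Diophantine points with a surjective derivative in the direction of the rotation family $R_t\circ f_*$. Pulling back by the analytic map $\Gamma$ — and checking $D\Gamma$ does not kill the transverse direction — gives that $D_* = \{\mathbf v : \rho(\Gamma(\mathbf v))=\rho_*\}$ is locally the zero set of an analytic submersion $\curr\to\RR$, hence an analytic submanifold of codimension $1$; this will be carried out more fully in the next section where the manifold structure of $D_*$ is ``described,'' so here I would only set up the functional and quote \cite{Yam02,Yam2019}.

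For the second claim, fix $\mathbf v\in D_*$, put $f=\Gamma(\mathbf v)$, and suppose toward a contradiction that some $\nu\in T_{\mathbf v}D_*\cap\cE$. Realize $\nu$ as the velocity of an analytic path $\mathbf v_t\in D_*$ with $\mathbf v_0=\mathbf v$, and let $f_t=\Gamma(\mathbf v_t)$, $\dot f_0 = D\Gamma\,\nu$; by the cone condition $\inf_{x\in\RR}\dot f_0(x) = \inf_{x\in\RR} D\Gamma\,\nu(x) > 0$, so $\dot f_0$ is a strictly positive vector field on $\T$. On the other hand, all $f_t$ have the same irrational rotation number $\rho_*$, and by Yoccoz each $f_t$ is topologically conjugate to $R_{\rho_*}$; differentiating the conjugacy equation (or, equivalently, using that the rotation number is constant along the path and appealing to the cohomological equation) one finds that $\dot f_0$ must be cohomologous to $0$ over the rotation $R_{\rho_*}$, i.e. $\dot f_0 = u\circ f - u$ (up to the natural normalization) for a continuous function $u$ on $\T$. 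Integrating $\dot f_0 = u\circ f - u$ against the unique $f$-invariant probability measure $\mu_f$ — which exists and is well-defined since $\rho_*$ is irrational — yields $\int_{\T}\dot f_0\,d\mu_f = 0$. This contradicts $\dot f_0>0$ everywhere, since a strictly positive continuous function has strictly positive integral against any probability measure. Hence $T_{\mathbf v}D_*\cap\cE=\emptyset$.

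The main obstacle, and the step I would spend the most care on, is the first one: establishing that $\rho\circ\Gamma$ is an analytic submersion (or at least locally a $C^1$ submersion with the right image) near $\fxpt$, so that $D_*$ genuinely is an analytic codimension-one submanifold rather than merely a level set of a continuous function. The subtlety is that $\rho(\cdot)$ is only well-behaved transverse to its level sets, and one must make precise the transverse analytic functional $\mathcal N$ and verify that the direction it detects is not annihilated by $D\Gamma$ — this uses that $\Gamma$ is, in the relevant directions, a submersion onto $\mathbf A_{U_1}$ near $f_*$, which in turn rests on the injectivity of the lift $\Lambda$ noted after its definition together with the explicit product structure of $\mathbf C_\cU$. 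The cone-field disjointness, by contrast, is a soft argument via the invariant measure and the cohomological equation and should go through cleanly once the path $\mathbf v_t\subset D_*$ and its projection $f_t$ are in hand; the only care needed there is the standard normalization issue in the cohomological equation (the equation holds up to an additive constant which is forced to vanish because both sides project correctly to $\T$), which does not affect the sign argument.
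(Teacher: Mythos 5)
Your second claim (the cone disjointness) is essentially fine in spirit, but your first step --- the manifold structure of $D_*$ --- has a genuine gap, and it is precisely the step the paper has to work to avoid. You propose to build a transverse analytic functional $\mathcal N$ from a linearizing coordinate, asserting that $f_*$ is ``analytically linearizable by Yoccoz's theorem'' because $\rho_*$ is Diophantine. But $f_*=\Gamma(\fxpt)$ is a \emph{critical} circle map ($\Gamma(\mathbf v)=\phi_3\circ\mathbf q\circ\phi_2\circ\mathbf q\circ\phi_1$ with $\mathbf q(x)=x^3$ and $\phi_1(0)=0$), not a diffeomorphism; the theorem of Yoccoz invoked in this paper gives only a \emph{topological} conjugacy to $R_{\rho_*}$, and no $C^1$ (let alone analytic) linearization can exist, since differentiating $\phi\circ f=R_{\rho}\circ\phi$ at a critical point of $f$ forces $\phi'=0$ there. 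Consequently there is no available mechanism making $\rho\circ\Gamma$ (or a surrogate $\mathcal N$) a differentiable submersion at the irrational level $\rho_*$, and this is not how the codimension-one structure is obtained in \cite{Yam02} or \cite{Yam2019} either. The paper instead approximates $D_*$ by the sets $D_k$ of triples for which $0$ is periodic of period $q_k$ with $\rho=p_k/q_k$: these \emph{are} analytic codimension-one submanifolds by the implicit function theorem applied to the analytic periodic-point equation. It then shows (via the monotonicity argument of Lemma~\ref{cone not in tk}, Hahn--Banach, and Alaoglu) that the tangent spaces $T_{\mathbf v_k}D_k$ along a sequence $\mathbf v_k\to\fxpt$ stay uniformly transverse to a fixed vector $v\in\cE$, writes the $D_k$ as graphs over a limiting hyperplane $T$, and extracts a $C^1$-convergent subsequence whose limit graph is shown to be an open piece of $D_*$. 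Your proposal is missing this entire approximation-by-rational-combinatorics scheme, and the route you suggest in its place is blocked.

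On the second claim: your cohomological-equation argument also needs care --- it presupposes that the conjugacies $\phi_{f_t}$ depend differentiably on $t$ and that $\dot f_0=u\circ f-u$ is solvable with $u$ continuous, neither of which is automatic here. However, the conclusion you want follows from the much softer fact the paper uses: if $\nu\in\cE$ then $\pi^{-1}\circ f_t\circ\pi>\pi^{-1}\circ f\circ\pi$ for small $t>0$, and a strictly larger lift cannot preserve an irrational rotation number (nor, in the paper's $D_k$ version, the periodic point equation $f^{q_k}(0)=0$). So that half is repairable by replacing the cohomological detour with plain monotonicity of the rotation number.
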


As before denote $p_k/q_k$ the reduced form of the $k$-th
continued fraction convergent of $\rho_*$. Furthermore, 
define $D_k$ as the set of ${\mathbf v}\in\cu$ for which $\rho(g)=p_k/q_k$ and 
$0$ is a periodic point of $g$ with period $q_k$, where $g=\Gamma({\mathbf v}).$
As follows from the Implicit Function Theorem, this is a codimension $1$ submanifold of $\cu$.
\begin{lemma}
\label{cone not in tk}
Let ${\mathbf v}\in D_k$, and denote $T_{{\mathbf v}}D_k$ the tangent space to $D_k$ at this point.
Then $T_{{\mathbf v}}D_k\cap \cE=\emptyset$.
\end{lemma}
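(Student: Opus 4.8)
The statement concerns the codimension-one manifold $D_k$ of maps $\mathbf v$ for which $\rho(\Gamma(\mathbf v))=p_k/q_k$ and $0$ is periodic of period $q_k$ for $g=\Gamma(\mathbf v)$. The plan is to produce an explicit linear functional cutting out $T_{\mathbf v}D_k$ and to show it is strictly positive on $\cE$. First I would recall how $D_k$ arises from the Implicit Function Theorem: the condition defining $D_k$ near $\mathbf v$ is $g^{q_k}(0)=0$ (the rotation number being rational and the origin periodic are really one condition once we are near a point where it holds), so $D_k=\Phi^{-1}(0)$ for the analytic functional $\Phi(\mathbf w)=\widehat{g_{\mathbf w}}^{\,q_k}(0)-0$ on the universal cover, where $\widehat{g}$ is the lift of $g=\Gamma(\mathbf w)$ normalized near the relevant branch. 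Then $T_{\mathbf v}D_k=\ker D\Phi(\mathbf v)$, and the claim $T_{\mathbf v}D_k\cap\cE=\emptyset$ is equivalent to: $D\Phi(\mathbf v)\,\nu\neq 0$ for every $\nu\in\cE$.

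The second and central step is to compute $D\Phi(\mathbf v)\nu$ by the chain rule along the orbit $0,g(0),\dots,g^{q_k-1}(0),g^{q_k}(0)=0$. Writing $\nu$ for a tangent vector to $\bcu$ and $v=D\Gamma\,\nu$ for the corresponding vector field on $\RR$ (a variation of the circle map $g$ itself), the standard variational formula gives
\begin{equation*}
D\Phi(\mathbf v)\nu \;=\; \sum_{j=0}^{q_k-1}\Big(\prod_{i=j+1}^{q_k-1} Dg\big(g^{i}(0)\big)\Big)\, v\big(g^{j}(0)\big).
\end{equation*}
Now I use the two key features of the situation. First, $g=f_*$ (or a point near it) has all its critical points off the orbit of $0$ except possibly at $0$ itself; more to the point, the orbit $\{g^j(0)\}_{0\le j<q_k}$ consists of points where $g$ is a local diffeomorphism with \emph{positive} derivative (orientation preservation), so each factor $Dg(g^i(0))>0$ and hence each coefficient in the sum is strictly positive. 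Second, by definition of $\cE$, $v(x)=D\Gamma\,\nu(x)>0$ for all real $x$, in particular at every point $g^j(0)$. Therefore $D\Phi(\mathbf v)\nu$ is a sum of strictly positive terms, hence strictly positive, and in particular nonzero. This shows $\nu\notin\ker D\Phi(\mathbf v)=T_{\mathbf v}D_k$, which is exactly the claim.

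\textbf{Main obstacle.} The delicate point is the behaviour of the derivative $Dg$ at the critical point $0$ itself, which sits at both ends of the orbit. In the factor corresponding to $j=0$ one never differentiates at $0$ (the product starts at $i=1$), and in the $j=q_k-1$ term the product is empty, so the formula above is actually \emph{clean}: the chain-rule coefficients only ever evaluate $Dg$ at the interior orbit points $g^i(0)$ with $1\le i\le q_k-1$, none of which is a critical point (the origin is the only critical point of $g$ on this orbit, and it recurs only at time $q_k$). Thus the potential zero of $Dg$ at $0$ does not enter at all, and every coefficient is a genuine positive real. I would be careful to phrase this correctly: work with the appropriate lift and the branch of $g^{q_k}$ that is univalent near $0$ composed as $g\circ(g^{q_k-1})$, so that $D\Phi(\mathbf v)=Dg(g^{q_k-1}(0))\cdot D(g^{q_k-1})(g(0))\cdot(\text{variation at }g(0)) + \dots$, i.e. one never differentiates the outer or inner $g$ at the critical point. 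Beyond that, the argument is a direct transcription of the unicritical case (cf. the analogous lemmas in \cite{Yam02} and \cite{GY}) and requires no new estimates; the only thing specific to the present setting is the definition of $\cE$ in terms of $D\Gamma$, which is precisely engineered to make the positivity of the $v(g^j(0))$ automatic.
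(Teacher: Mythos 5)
Your conclusion is right and your route is essentially the infinitesimal version of the paper's argument: where you differentiate $\mathbf w\mapsto \widehat{g_{\mathbf w}}^{\,q_k}(0)$ and expand $D\Phi(\mathbf v)\nu$ by the chain rule along the orbit, the paper instead takes a one-parameter family $\mathbf v_t=\mathbf v+t\nu+o(t)$, observes that $\inf_x D\Gamma\nu(x)>0$ forces the lift of $f_t=\Gamma(\mathbf v_t)$ to lie strictly above that of $f$ for small $t>0$, and then uses monotonicity of lifts to push this strict inequality through $q_k$ iterates, so that $\widehat{f_t}^{\,q_k}(0)\geq \widehat{f}^{\,q_k}(0)+t\inf D\Gamma\nu+o(t)$ and the defining functional of $D_k$ has strictly positive derivative in the direction $\nu$. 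The monotonicity argument has the advantage of never evaluating $Dg$ anywhere, which matters here.

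That is exactly where your write-up has a flaw: you assert that every interior orbit point $g^i(0)$, $1\le i\le q_k-1$, is a regular point of $g$, "the origin being the only critical point of $g$ on this orbit." For a bi-cubic map this is unjustified and in general false: $g$ has a second critical point $c\neq 0$, and nothing in the definition of $D_k$ prevents $c$ from lying on the periodic orbit of $0$ (indeed, the collision of the two critical orbits is precisely the degenerate situation this paper is built around). When that happens, some of your chain-rule coefficients $\prod_{i=j+1}^{q_k-1}Dg(g^i(0))$ vanish, so your claim that the sum is a sum of strictly positive terms fails. The proof is nevertheless easily repaired without changing the approach: since $g$ is orientation preserving with cubic critical points, $Dg\ge 0$ everywhere, so every term in the sum is non-negative, while the $j=q_k-1$ term is the empty product times $v(g^{q_k-1}(0))>0$; hence $D\Phi(\mathbf v)\nu\ge v(g^{q_k-1}(0))>0$ and $\nu\notin T_{\mathbf v}D_k$. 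You should replace the "all factors are positive" claim with this non-negativity-plus-one-positive-term argument (or simply switch to the paper's monotonicity-of-lifts phrasing, which avoids the issue altogether).
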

\begin{proof}
Let $\nu\in \cE$ and suppose $\{ {\mathbf v}_t\}\subset \cu$ is a one-parameter family
such that  $${\mathbf v}_t={\mathbf v}+t\nu+o(t),$$
and set $f=\Gamma({\mathbf v})$ and $f_t=\Gamma({\mathbf v}_t)$, so that
$$\pi^{-1}\circ f_t\circ\pi=\pi^{-1}\circ f\circ\pi+tD\Gamma(\nu)+o(t).$$
Then for sufficiently small values of $t$, 
$\pi^{-1}\circ f_t\circ \pi>\pi^{-1}\circ f\circ\pi$. Hence $f_t^{q_k}(0)\neq f^{q_k}(0)$
and thus $f_t\notin D_k$.
\end{proof}


\noindent
Elementary considerations of the Intermediate Value Theorem imply that 
for every $k$ there exists a value of $\theta\in(0,1)$ such that 
the map $f_\theta=R_{\theta}\circ \Gamma(\fxpt)\in D_k$. 
Moreover,
if we denote $\theta_k$ the angle  with the smallest absolute value satisfying this 
property, then $\theta_k\to 0$.
Set $${\mathbf v}_k=(\phi_1,\phi_2,R_{\theta_k}\circ \phi_3).$$
For $k$ large enough, ${\mathbf v}_k\in\cu\cap D_k$.
Let $T_k=T_{{\mathbf v}_k}D_k$.
Fix $v\in\cE$. By  Lemma~\ref{cone not in tk} and the Hahn-Banach Theorem there exists $\eps>0$
such that for every $k$ there exists a linear functional $h_k\in (T\cu)^*$
with $||h_k||=1$,
such that $\operatorname{Ker}h_k=T_k$ and $h_k(v)>\eps$.
By the Alaoglu Theorem, we may select a subsequence
$h_{n_k}$ weakly converging to $h\in (T\cu)^*$. Necessarily 
$v\notin \operatorname{Ker}h$, so
$h\not\equiv 0$. Set $T=\operatorname{Ker}h$.

\smallskip
\noindent
{\it Proof of Theorem~\ref{stable1}.} 
By the above, we may select a splitting $T\cu=T\oplus v\cdot\RR$.
Denote $p:T\cu\to T$ the corresponding projection, and let
$\psi:\cu\to T\cu$ be a local chart at $\fxpt$.
Lemma~\ref{cone not in tk} together with the Mean Value Theorem imply that
$p\circ \psi:D_k\to T$ is an isomorphism onto the image,
and there exists an open neighborhood $\cU$ of the origin in $T$, such that 
$p\circ \psi(D_k)\supset \cU$. Since the graphs $D_{k}$ are analytic, we may
select a $C^1$-converging subsequence $D_{k_j}$ whose limit is a smooth graph $G$ over $\cU$.
 Necessarily, for every $g\in G$, $\rho(g)=\rho_*$. As we have seen above,
every point $g\in D_*$ in a sufficiently small neighborhood of $\fxpt$ is in
$G$, and thus $G$ is an open neighborhood in $D_*$.
\begin{flushright} $\Box$\end{flushright}

\section{Hyperbolicity of $\hat\cR$.}
\label{sec-hyperb}
Recall that $\hat\cP_B\subset\curr$ denotes the set of 
periodic points of $\hat\cR$ with rotation numbers of type bounded by $B$. We will show the following:
\begin{theorem}\label{th:unifhyp}
Let $B\in\NN$. The set $\hat\cP_B$ is uniformly hyperbolic with two-dimensional unstable bundle.
\end{theorem}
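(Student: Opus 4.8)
The plan is to establish uniform hyperbolicity of the family $\hat\cP_B$ by combining three ingredients: the codimension-two stable manifold (Theorem~\ref{stable manifold}), the existence of a complementary two-dimensional expanded subspace coming from variation of the two combinatorial invariants encoded in the signature cocycle~\eqref{eq:cocycle}, and a compactness argument which upgrades hyperbolicity at each individual periodic orbit to \emph{uniform} hyperbolicity along the whole family. Let me describe the steps in order.

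First, I would fix a periodic point $\fxpt\in\hat\cP_B$ of period $p$ (so $\hat\cR^{p}\fxpt=\fxpt$) and analyze $D(\hat\cR^{p})$ at $\fxpt$. By Theorem~\ref{stable manifold}, the local stable manifold $S_*$ is an analytic codimension-two submanifold on which $\hat\cR^{p}$ contracts at a geometric rate; hence $T_{\fxpt}S_*$ is a codimension-two $D(\hat\cR^{p})$-invariant subspace on which the differential is a contraction. Since $\hat\cR$ is a compact analytic operator (Proposition~\ref{projection1} together with Proposition~\ref{analytic dependence2} and the factorization~\eqref{eq:hatcr}), the differential $D(\hat\cR^{p})$ at $\fxpt$ is a compact operator, so its spectrum outside $T_{\fxpt}S_*$ consists of finitely many eigenvalues counted with multiplicity. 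It remains to identify a two-dimensional invariant complement $E^u$ on which $D(\hat\cR^{p})$ expands. The natural candidates for the two expanding directions are: (i) the direction of change of rotation number, detected by the cone field $\cE$ of Theorem~\ref{stable1} (the set $D_*$ of maps with $\rho=\rho_*$ is codimension one and transverse to $\cE$, and $\hat\cR$ acts on rotation numbers as a power of the Gauss map, which is uniformly expanding on $\cBT_B$ with a definite rate depending only on $B$); and (ii) the direction of change of the second coordinate $\delta$ of the signature within $D_*$ — the quotient $D_*/S_*$ is one-dimensional, and on it $\hat\cR$ acts, via the second component of the cocycle~\eqref{eq:cocycle}, as $\delta\mapsto\|\delta/\rho\|_{\RR/\ZZ}$, which along a periodic orbit is a linear map with multiplier of modulus $\prod 1/\rho_j>1$. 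I would make both of these precise by constructing, as in the proof of Theorem~\ref{stable1}, explicit one-parameter families (rotations $R_\theta\circ f_*$ for the first, and a one-parameter family moving $\phi(c)$ for the second) whose tangent vectors at $\fxpt$ span a two-dimensional subspace $E^u$ transverse to $T_{\fxpt}S_*$, and then checking that $D(\hat\cR^{p})E^u=E^u$ with the $2\times 2$ matrix in these coordinates being (conjugate to) the product of the derivative cocycles of~\eqref{eq:cocycle} over the periodic orbit, whose eigenvalues have modulus $>1$ bounded below in terms of $B$.

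Second, having produced a hyperbolic splitting $T_{\fxpt}\curr = E^u\oplus T_{\fxpt}S_*$ at each periodic point with \emph{expansion} rate on $E^u$ controlled by the cocycle and \emph{contraction} rate on $T_{\fxpt}S_*$ coming from Theorem~\ref{stable manifold}, I would turn to uniformity. Here the key point is that all periodic orbits in $\hat\cP_B$ lie in a fixed Carath\'eodory-compact set (the attractor $\hat\cA_B$ of Theorem~\ref{th:attractor}, which is compact and $\hat\cR$-invariant, and on which $\hat\cR$ is analytic with uniformly bounded domain by Proposition~\ref{prop:cren}). By the standard Lanford-type mechanism — estimating the differential of a holomorphic operator via a Cauchy estimate on a complex neighborhood of fixed size — the norms $\|D\hat\cR\|$ and the moduli of continuity of $v\mapsto D\hat\cR_v$ are uniformly bounded over $\hat\cA_B$. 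The contraction rate on the stable bundle is uniform by the exponential convergence in Theorem~\ref{theorem:convergencerenor} (with $\lambda=\lambda(B)$), and the expansion rate on the two-dimensional bundle is uniform because the Gauss-map expansion and the cocycle~\eqref{eq:cocycle} are uniformly expanding on $\cBT_B$. A now-routine cone-field argument (build a uniform unstable cone field around the $E^u$'s using the functional $h$ from the proof of Theorem~\ref{stable1} plus the $\delta$-functional, verify it is forward-invariant under $D\hat\cR$ with definite expansion, and dually for the stable cone field) then yields uniform hyperbolicity of the whole set $\hat\cP_B$, with a two-dimensional unstable bundle.

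The main obstacle I anticipate is the second expanding direction — the one parametrized by the $\mu_{f_*}$-distance between the two critical points. Unlike the rotation-number direction, this direction is genuinely one-dimensional only because we have moved to the space of triples $\curr$: in the original cylinder space $\mathbf A_U$ a unicritical degree-$9$ map has no such deformation, and this is precisely the collapse of cone fields discussed in the introduction. So I must verify that (a) the lift $\Lambda$ really does separate this direction — i.e. that for periodic $\fxpt$ there is a genuine tangent vector in $\curr$, not killed by $D\Gamma$, corresponding to infinitesimally moving $\phi(c)$ relative to $0$, and that it maps to a legitimate variation of the triple (this uses that a holomorphic pair of degree $9$ factors uniquely as $\psi_3\circ\mathbf q\circ\psi_2\circ\mathbf q\circ\psi_1$, so the "position of the intermediate critical value" is a well-defined coordinate on $\mathbf C_\cU$); and (b) that $D(\hat\cR^p)$ acts on this vector with the cocycle multiplier claimed above and with no resonance with the rotation-number direction. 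Establishing that this variation is nontrivial in $\curr$ and transverse to $S_*$, uniformly over $\hat\cP_B$, is the crux; once it is in hand, everything else is the standard renormalization-hyperbolicity package.
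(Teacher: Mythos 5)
Your overall architecture (codimension-two contracting subspace, two complementary expanding directions, compactness of the differential, then a compactness/density argument for uniformity) matches the paper's, and your first expanding direction — the rotation-number direction detected by the cone field $\cE$ — is exactly Proposition~\ref{dir1}. But there are two genuine gaps, and one logical issue.

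The first gap is the second expanding direction. You propose to read off the expansion from the second component of the signature cocycle~\eqref{eq:cocycle}, treating $\delta$ as a coordinate on the one-dimensional quotient $D_*/S_*$ on which $D(\hat\cR^p)$ acts with multiplier $\prod 1/\rho_j>1$. This does not work as stated: $\delta$ is defined through the Yoccoz conjugacy $\phi$ to a rigid rotation (equivalently through the invariant measure $\mu_f$), which is only a homeomorphism depending on $f$ in a non-smooth way, so $\mathbf v\mapsto\delta(\Gamma(\mathbf v))$ is not a differentiable functional whose derivative under $\hat\cR$ is governed by~\eqref{eq:cocycle}. The cocycle is a statement about topological invariants, not about the operator norm of $\cL$ on a tangent subspace. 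The paper instead proves expansion directly (Theorem~\ref{expdir1}): it takes a ``bump'' vector field in the $\phi_2$-coordinate localized to an interval of a deep dynamical partition, compensated in $\phi_3$ to stay tangent to $D_*$, and uses bounded distortion in both dynamical and parameter space to show $\|\cL^k\nu\|\ge a^k\|\nu\|$ with $a>1$. This is where the passage to the space of triples is actually used, and it is the step your proposal identifies as ``the crux'' but does not supply.

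The second gap is ruling out a neutral eigenvalue. Even granting a codimension-two contracting subspace and one strictly expanding direction, the spectral analysis of the compact operator $\cL$ only yields that the remaining non-stable direction is \emph{repelling or neutral} (this is exactly Theorem~\ref{expdir2}). Your assertion that the $2\times2$ matrix on $E^u$ has both eigenvalues of modulus $>1$ is unsubstantiated, since it rests on the cocycle identification above. The paper needs a separate, soft argument (Theorem~\ref{expdir3}): Lyubich's Slow Small Orbits Theorem combined with complex {\it a posteriori} bounds and Tower Rigidity to exclude the neutral case. Nothing in your proposal replaces this. Finally, note a circularity: you take Theorem~\ref{stable manifold} as an input, but in the paper its proof is only completed by the hyperbolicity analysis of Section~\ref{sec-hyperb} (uniform contraction on the limits of the unicritical strata $S_n$ comes from Theorems~\ref{conj4a}--\ref{conj4}, imported from the unicritical theory and the uniformity in \cite{ESY}), so you cannot use it as a black box here. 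Your uniformity step via Cauchy estimates and invariant cone fields over the compact attractor is a reasonable alternative to the paper's appeal to Ma\~n\'e's Lemma, but it only becomes relevant once the pointwise spectral picture is actually established.
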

Informally speaking, one unstable direction corresponds to adding a constant to $\phi_3$, and the other one corresponds to adding a constant to $\phi_2$.
The key in the above theorem, and the difference from the result of \cite{Yam2019} is the uniformity of hyperbolicity.  The two unstable cones constructed in \cite{Yam2019} collide when the two critical points combine into one in the limit.

Let us $\fxpt$ be as above and 
denote
$$\cL\equiv D\hat\cR|_{\bfv_*}.$$
We have (see \cite{GY}):
\begin{proposition}
  \label{prop-comp-lin}
The linear operator $\cL$ is compact.

\end{proposition}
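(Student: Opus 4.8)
The plan is to realize $\cL = D\hat\cR|_{\fxpt}$ as the composition of a bounded linear operator with a compact restriction operator, exploiting the fact that renormalization produces maps holomorphic on strictly larger domains than the ones one starts with. First I would argue that $\hat\cR$ factors as $\hat\cR = \iota\circ\hat\cR_0$, where $\hat\cR_0\colon\mathcal V_B\to\mathbf C_{\tilde\cU}$ is analytic and lands in triples over a \emph{fixed enlarged} triple of domains $\tilde\cU=(\tilde U_1,\tilde U_2,\tilde U_3)$ with $\overline{U_i}\Subset\tilde U_i$ for $i=1,2,3$, and $\iota\colon\mathbf C_{\tilde\cU}\to\mathbf C_\cU$ is the restriction map $(\phi_1,\phi_2,\phi_3)\mapsto(\phi_1|_{U_1},\phi_2|_{U_2},\phi_3|_{U_3})$.

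To establish the factorization I would invoke the complex \emph{a priori} bounds: by Theorem~\ref{th:bounds} and Proposition~\ref{prop:cren} there is a definite $\mu=\mu(B)>0$ such that, for $\mathbf v$ in a neighborhood of $\fxpt$, the pair $\cR^{m-1}(\Gamma(\mathbf v))$ extends to a holomorphic commuting pair in $\hol(\mu)$ with range a fixed disk $\Delta$. Hence in the decomposition $\eta=\psi_3\circ\mathbf q\circ\psi_2\circ\mathbf q\circ\psi_1$ used to build the lift $\Lambda$, each conformal factor $\psi_i$ is defined on a domain definitely larger than the nominal one prescribed by $\cU$; and, by Theorem~\ref{th:hcp-cyl} together with Propositions~\ref{ren open set} and \ref{analytic dependence2}, the fundamental crescent and the uniformizing coordinate $H$ can be chosen so that the cylinder renormalization $\cren(\Gamma(\mathbf v))$ is holomorphic on an equatorial annulus $\tilde U_1\Supset U_1$, uniformly over $\mathbf v$. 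Transporting the three domains through $H$ and the $\psi_i$, the renormalized triple $\Lambda(\cren(\Gamma(\mathbf v)))$ lies in $\mathbf C_{\tilde\cU}$; the analyticity statements underlying \eqref{eq:hatcr} (together with Proposition~\ref{projection1}) then show that $\hat\cR_0$ is analytic, so that $D\hat\cR_0|_{\fxpt}$ is a bounded linear operator.

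Next I would record that the restriction operator is compact. For $\R$-symmetric domains with $\overline U\Subset\tilde U$, the restriction $\mathbf B_{\tilde U}\to\mathbf B_U$ sends the unit ball to a uniformly bounded family of holomorphic functions, which by Montel's theorem is precompact in the sup norm on $\overline U$; hence the restriction is compact. The same applies to $\mathbf D_{\tilde U_1}\to\mathbf D_{U_1}$ and, at the level of tangent spaces, to $\mathbf G_{\tilde U_3}\to\mathbf G_{U_3}$, so the product operator $D\iota$ is compact. Differentiating $\hat\cR=\iota\circ\hat\cR_0$ at $\fxpt$ gives $\cL=D\iota\circ D\hat\cR_0|_{\fxpt}$: a bounded operator followed by a compact one, hence compact.

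The main obstacle is the first step: carefully tracking the three domains through the decomposition $\eta=\psi_3\circ\mathbf q\circ\psi_2\circ\mathbf q\circ\psi_1$ and through the passage to the uniformizing coordinate $H$, and verifying that the gain of domain is genuine and uniform over a neighborhood of $\fxpt$. This is precisely where the complex \emph{a priori} bounds and the cylinder-renormalization machinery of \cite{Yam2019} enter, and the argument parallels the corresponding compactness statements in \cite{Yam02} and \cite{GY}.
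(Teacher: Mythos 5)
Your argument is correct, and it reaches the same conclusion by essentially the same mechanism as the paper --- compactness via a gain of analyticity domain plus Montel --- but the factorization is arranged differently. The paper writes $\cL=D\Lambda|_{\cren(f_*)}\circ D\cren|_{f_*}\circ D\Gamma|_{\fxpt}$ and simply cites the compactness of $D\cren|_{f_*}$ from \cite[Proposition 9.1]{Yam02} (see also \cite[Proposition 6.4]{GY}), together with the boundedness of $D\Lambda$ and $D\Gamma$; the compact factor sits at the level of the cylinder renormalization acting on $\mathbf A_U$. You instead push the domain gain all the way through the lift $\Lambda$ and place the compact factor at the very end, as a restriction operator between triple spaces over nested domains, re-proving by hand (via the complex \emph{a priori} bounds and the decomposition $\eta=\psi_3\circ\mathbf q\circ\psi_2\circ\mathbf q\circ\psi_1$) what the citation delivers for free. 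Your route is more self-contained but carries the extra burden of verifying, uniformly over a neighborhood of $\fxpt$, that all three coordinates of the renormalized triple extend to strictly larger domains $\tilde U_i\Supset U_i$; this is plausible and consistent with how $U_2$, $U_3$ are ``appropriately and uniformly chosen'' in the definition of $\Lambda$, but it is precisely the content of the results the paper chooses to quote rather than redo. One small point worth noting in your write-up: the restriction $\iota$ need not map the open set $\mathbf C_{\tilde\cU}$ into $\mathbf C_\cU$ in general (the compact containment conditions are stated relative to the target domains), so for the compactness of $\cL$ you should phrase $D\iota$ as the restriction operator between the ambient Banach spaces $\mathbf D_{\tilde U_1}\times\mathbf B_{\tilde U_2}\times\mathbf G_{\tilde U_3}\to\mathbf D_{U_1}\times\mathbf B_{U_2}\times\mathbf G_{U_3}$, where Montel's theorem applies without further ado.
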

\begin{proof}
By  \cite[Proposition 9.1]{Yam02}, see also \cite[Proposition 6.4]{GY}, the linear operator $D\cren|_{f_*}$ is compact. On the other hand, the operator $D \Lambda|_{\cren(f_*)}$ is bounded. Therefore, the operator $\cL$ being  the composition of a bounded operator with a compact operator, is compact.
  \end{proof}

Spectral theory of compact operators implies, in particular, that $\cL$ is a strict contraction in an invariant subspace of a finite codimension $\kappa$ corresponding to the part of the spectrum of $\cL$ inside the unit disk. Furthermore, there are at most $\kappa$  non-contracting eigendirections, whose eignespaces span a $\kappa$-dimensional non-contracting invariant subspace.

As was shown in \cite{GY} (see also \cite{Yam03}):
\begin{proposition}
\label{dir1}
  There exist constants $n_1\in\NN$, $a_1>0$ and $\lambda_1>1$ such that for every $\nu\in \cE\cap T_{\bfv}\curr$  and $n>n_1$
  we have
  $$\inf\cL^n\nu>a_1\lambda_1^n\inf_{x\in\RR} D\Gamma\nu(x).$$
\end{proposition}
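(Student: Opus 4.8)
The proof follows, with routine modifications, the expansion estimate for cylinder renormalization in \cite{GY} (see also \cite{Yam03}); I describe the plan.

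\emph{Reduction via $\Gamma$.} Interpret $\inf\cL^n\nu$ as $\inf_{x\in\RR}D\Gamma(\cL^n\nu)(x)$. Write $f_*=\Gamma(\fxpt)$ and $w=D\Gamma|_{\fxpt}\nu$, so that $\nu\in\cE$ means $\eps_0:=\inf_\RR w>0$. Since $\hat\cR=\Lambda\circ\cren\circ\Gamma$ and, by construction, $\Gamma\circ\Lambda$ is the identity on cylinder maps (or, at worst, a fixed analytic conjugacy along the finite $\hat\cR$-orbit of $\fxpt$, which alters infima only by bounded factors), we get $\Gamma\circ\hat\cR^n=\cren^n\circ\Gamma$ and hence $D\Gamma(\cL^n\nu)=D(\cren^n)|_{f_*}(w)$. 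Thus it suffices to prove that $\inf_\RR D(\cren^n)|_{f_*}(w)\ge a_1\lambda_1^n\eps_0$; that is, the cone of vector fields with positive infimum is expanded by the derivative of cylinder renormalization along the orbit of $f_*$, at a geometric rate.

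\emph{Cone invariance and the chain rule.} By Proposition~\ref{prop cylren1} the $n$-th cylinder renormalization $\cren^n f_*$ is analytically conjugate, via the uniformizing coordinate $h_n$ of a level-$n$ fundamental crescent, to the normalized commuting pair $\cR^{N}f_*=(\widetilde{f_*^{\,q_{N+1}}}|_{\widetilde I_N},\dots)$, where $N=N(n)$ grows linearly in $n$ (each application of $\cren$ advances the pair renormalization by the fixed amount $m=m(B)$). Because all renormalizations of $f_*$ lie in the compact family furnished by Theorems~\ref{th:realbds} and \ref{th:bounds}, the crescents, and hence the conjugacies $h_n$, are uniformly tame: $\|h_n'\|,\|(h_n^{-1})'\|\le c(B)$. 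Differentiating $f_*^{\,q_{N+1}}=f_*\circ\cdots\circ f_*$, the perturbation of $f_*^{\,q_{N+1}}|_{I_N}$ induced by $w$ is
$$W_N(y)=\sum_{j=0}^{q_{N+1}-1}\bigl(f_*^{\,q_{N+1}-1-j}\bigr)'\!\bigl(f_*^j(y)\bigr)\,w\bigl(f_*^j(y)\bigr),\qquad y\in I_N.$$
Every factor $(f_*^{\,q_{N+1}-1-j})'$ is positive since $f_*$ is orientation preserving, which already yields invariance of the cone; keeping only the term $j=q_{N+1}-1$, whose derivative factor equals $1$, gives $W_N(y)\ge w\bigl(f_*^{\,q_{N+1}-1}(y)\bigr)\ge\eps_0$. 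The correction terms coming from the $f$-dependence of the crescent and of $h_n$ do not destroy this: this is precisely the monotonicity input of \cite{GY,Yam03}, namely that a one-parameter family with $\dot f>0$ pointwise renormalizes to a family that is again pointwise increasing.

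\emph{Extracting the exponential factor.} The normalization in $\cR^N f_*$ is the affine rescaling by $1/|I_N|$ (with $|I_N|=|f_*^{\,q_N}(0)|$), so the perturbation of $\cR^N f_*$ induced by $w$ has infimum at least $\tfrac1{|I_N|}\inf_{I_N}W_N\ge\eps_0/|I_N|$; transporting back through the tame conjugacy $h_n$ costs a factor $c(B)$, whence $\inf_\RR D(\cren^n)|_{f_*}(w)\ge c(B)\,\eps_0/|I_{N(n)}|$. By the real \emph{a priori} bounds the lengths $|I_N|$ decay geometrically, $|I_N|\le C(B)\,\theta(B)^N$ with $\theta(B)<1$, and since $N(n)$ is linear in $n$ we obtain $1/|I_{N(n)}|\ge a_1\lambda_1^n$ for suitable $a_1=a_1(B)>0$, $\lambda_1=\lambda_1(B)>1$ and all $n>n_1(B)$. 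This is the claim; equivalently, it is the familiar statement that the rotation–number direction is expanded, with rate governed by the derivative $\rho^{-2}>1$ of the Gauss map iterated along the periodic combinatorics.

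\emph{Main obstacle.} The only non-routine ingredient is the cone-invariance assertion quoted from \cite{GY,Yam03}: that $D\cren$ preserves $\cE$ despite the correction terms produced by the holomorphic motion of the fundamental crescent and by the uniformization — i.e.\ that the first-return/uniformization construction is monotone for the pointwise order on circle maps. Everything else is bounded distortion together with the geometric decay of $|I_N|$, both supplied by the universal \emph{a priori} bounds.
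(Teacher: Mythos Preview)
The paper does not supply its own proof of this proposition; it simply records it as a consequence of \cite{GY} (and \cite{Yam03}). Your sketch is a faithful reconstruction of the expansion argument from those references: reduce to the action of $D\cren$ on vector fields on the circle, use the chain rule for $f_*^{\,q_{N+1}}$ to see that the induced perturbation is a sum of positive terms (cone invariance), and extract the geometric factor from the affine rescaling by $1/|I_N|$, whose decay is controlled by real \emph{a priori} bounds. You also correctly isolate the only genuinely delicate point --- that the correction terms coming from the holomorphic motion of the fundamental crescent and from the uniformizing coordinate do not spoil the monotonicity --- and correctly attribute its resolution to \cite{GY,Yam03}.

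One minor caveat on the reduction step: the assertion $\Gamma\circ\Lambda=\mathrm{id}$ is not literally how $\Lambda$ is set up in Section~\ref{sec:renortriples}; the lift $\Lambda$ is built from the decomposition of the long branch of a \emph{pair renormalization} of $f$ composed with the uniformizing coordinate, so $\Gamma\circ\Lambda$ is $\cren$-like rather than the identity. Your parenthetical hedge (``at worst a fixed analytic conjugacy\ldots which alters infima only by bounded factors'') almost covers this, but the honest statement is that $\Gamma\circ\hat\cR^n$ equals $\cren^{kn}\circ\Gamma$ for some fixed $k\ge1$ up to bounded conjugacies along the finite periodic orbit. This shifts the exponent linearly and changes $a_1,\lambda_1$ by bounded factors, so the conclusion survives unchanged.
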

Hence, the cone field $\cE$ contains an unstable direction of $\hat\cR$.

 We now proceed to describing the manifold structure of $S_*\subset D_*$. Let $c_f\neq 0$ denote the non-zero critical point of $f$. 
Let $S_{n}\subset D_*$  consist of maps $f$ such that
$f$ has the same combinatorics as $\cE$ up to the level $n$, and the $n$-th renormalization $\cR^n(f)$ is uni-critical (that is, two critical orbits collide).

As a straightforward consequence of Implicit Function Theorem considerations, we note:
\begin{proposition}\label{propconj2}
  The set $S_{n}$ is a local submanifold of $D_*$ of codimension $1$.
  \end{proposition}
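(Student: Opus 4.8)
The statement to prove is Proposition~\ref{propconj2}: that $S_n$ is a local submanifold of $D_*$ of codimension $1$. The plan is to realize $S_n$ as the zero set of a single analytic functional on $D_*$ whose differential is non-degenerate, and then invoke the (analytic) Implicit Function Theorem in the Banach manifold $D_*$ (which is already known to be a codimension-$1$ submanifold of $\curr$ by Theorem~\ref{stable1}).

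First I would set up the relevant functional. For $\mathbf v\in D_*$ near $\fxpt$, the map $f=\Gamma(\mathbf v)$ has rotation number $\rho_*$ and, since the combinatorics up to level $n$ is locally constant on $D_*$, the iterate $\cR^n f$ is well-defined as a holomorphic commuting pair depending analytically on $\mathbf v$ (using Proposition~\ref{analytic dependence2}, Proposition~\ref{prop:cren}, and the analyticity of $\Gamma$ and of the lift $\Lambda$). The condition defining $S_n$ — that the two critical orbits of $f$ collide at level $n$, equivalently that $\cR^n f$ is unicritical — can be phrased as follows: the non-zero critical point $c_f$ of $f$, transported into the dynamical coordinates of $\cR^n f$, lands exactly at the origin. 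Concretely, let $\Psi_{\mathbf v}$ be the analytic map recording the position (in the normalized commuting-pair coordinate of $\cR^n f$) of the image of $c_f$ under the appropriate branch of $f^{j}$ for the relevant iterate $j\le q_{n+1}$ that brings it into the central domain; then $S_n=\{\mathbf v\in D_*: \Psi_{\mathbf v}=0\}$. Here $\Psi$ is real-valued on $D_*$ and depends analytically on $\mathbf v$.

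The second step is to check that $d\Psi$ is nonzero at every point of $S_n$ near $\fxpt$ — this is the transversality/submersion condition that makes the Implicit Function Theorem applicable. The natural way to see this is to exhibit one explicit tangent direction in $T_{\mathbf v}D_*$ along which $\Psi$ moves to first order. The obvious candidate is the direction corresponding to translating $\phi_3$ by a constant, or more precisely a direction that changes the signature invariant $\delta$ while keeping $\rho=\rho_*$ fixed — recall the expanding cocycle \eqref{eq:cocycle} shows the $\delta$-coordinate is nondegenerately affected by renormalization, and moving $\delta$ precisely displaces the secondary critical orbit relative to the central interval. Since $D_*$ is the locus $\rho=\rho_*$ and $S_*\subset D_*$ is cut out by one further condition on $\delta$, it is natural that $\Psi$ (whose vanishing is one of the conditions building up $S_*$) has surjective differential on $D_*$; one makes this rigorous by computing $\partial_t\Psi(\mathbf v_t)$ for the one-parameter family $\mathbf v_t=(\phi_1,\phi_2,R_{t}\circ\phi_3)$ and showing it is nonzero, using that the critical point $c_f$ is not a fixed point of the central return map and that postcomposing with a small rotation moves $f^{q_{n+1}}(c_f)$ away from its current position — the same Mean Value Theorem mechanism used in Lemma~\ref{cone not in tk}.

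The main obstacle I expect is bookkeeping the branch structure and analytic dependence: one must be careful that, on a neighborhood of $\fxpt$ in $D_*$, the combinatorial data (which iterate of $f$ carries $c_f$ into the central domain of $\cR^n f$, and through which univalent branches) is genuinely locally constant, so that $\Psi$ is a single well-defined analytic functional rather than a piecewise object; this uses the real {\it a priori} bounds (Theorem~\ref{th:realbds}) and the openness statements for cylinder renormalization (Proposition~\ref{ren open set}, Proposition~\ref{well-defined}) to guarantee stability of the combinatorics. Once $\Psi$ is seen to be analytic with everywhere-surjective differential on $D_*$ near $\fxpt$, the analytic Implicit Function Theorem gives that $S_n\cap W$ is an analytic codimension-$1$ submanifold of $D_*$, completing the proof.
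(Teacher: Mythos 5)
The paper offers no written proof of Proposition~\ref{propconj2} --- it is asserted as ``a straightforward consequence of Implicit Function Theorem considerations'' --- so your job was to supply the missing details, and your overall scheme is the intended one: write $S_n$ locally as the zero set of a single analytic functional $\Psi(\mathbf v)=f^{m}(0)-c_f$ (the displacement of the transported critical orbit from the second critical point, along a locally constant branch), check that $d\Psi$ does not vanish on $T_{\mathbf v}D_*$, and apply the Implicit Function Theorem inside $D_*$. The setup of $\Psi$, including the observation that the relevant iterate and branches are locally constant by real bounds and the openness statements for cylinder renormalization, is fine.

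The gap is in the transversality step. The one-parameter family you propose to differentiate along, $\mathbf v_t=(\phi_1,\phi_2,R_t\circ\phi_3)$, post-composes the cylinder map with a rotation and therefore moves the rotation number monotonically; it is exactly the family the paper uses to produce points of the manifolds $D_k$, and Lemma~\ref{cone not in tk} is precisely the statement that such monotone directions are \emph{not} tangent to the rotation-number level sets. Hence $\partial_t\mathbf v_t|_{t=0}\notin T_{\mathbf v}D_*$, and computing $\partial_t\Psi(\mathbf v_t)$ along this family says nothing about $d\Psi$ restricted to $T_{\mathbf v}D_*$, which is what the Implicit Function Theorem inside $D_*$ requires. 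Your own heuristic earlier in the same paragraph --- move the $\delta$-coordinate of the signature while freezing $\rho$ --- is the right one, but to make it rigorous you must produce a genuinely tangent vector, for instance the compensated perturbation used in the proof of Theorem~\ref{expdir1}: a bump in $\phi_2$ (or $\phi_1$) localized near the portion of the orbit of $0$ that determines $f^m(0)$, together with an adjustment of $\phi_3$ chosen so that the rotation number is unchanged to first order; one then checks, using the localization of the bump against the global nature of the compensation together with bounded distortion, that the net first-order displacement of $f^m(0)-c_f$ is nonzero. As written, the transversality claim --- and with it the codimension-one conclusion --- is not established.
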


Since the $n$-th renormalization of $f\in S_{n}$ is a uni-critical circle mapping, we can apply the results of \cite{Yam02} to conclude:
\begin{theorem}
  \label{conj4a}
  All maps $f\in\Gamma(S_{n})$ are smoothly conjugate. Moreover, let $\hat f$ be an arbitrary base point in $D_{n}$. Then, for $f\in S_{n}$  the conjugacy $\phi_f$ which realizes
  $$\phi_f\circ f\circ\phi_f^{-1}=\hat f$$
 and sends $0$ to $0$ can be chosen so that the dependence $f\mapsto \phi_f$ is analytic.
  \end{theorem}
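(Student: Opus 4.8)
\emph{Strategy.} On $S_n$ the two combinatorial invariants degenerate, after $n$ renormalizations, to the single invariant of the unicritical theory, for which the complete hyperbolicity and smooth--rigidity picture is available from \cite{Yam02,Yam03}. Accordingly the plan is: (1) push everything $n$ levels down so as to land on unicritical maps; (2) quote the unicritical result for the smooth conjugacy and its analytic dependence there; (3) pull the conjugacy back up through the $n$ renormalization microscopes, each of which depends analytically on the map; (4) check that the resulting global conjugacy has the required regularity and that the dependence is analytic in the appropriate Banach space, using the real and complex \emph{a priori} bounds. Fix a base point $\hat f=\Gamma(\hat{\mathbf v})$ with $\hat{\mathbf v}\in S_n$.

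\emph{Step 1 (reduction to the unicritical case).} For $\mathbf v\in S_n$ put $f=\Gamma(\mathbf v)$. By definition of $S_n$, $f$ has the combinatorics of $f_*=\Gamma(\fxpt)$ up to level $n$, and $\cR^n f$ is a unicritical analytic circle map (with a critical point of order $9$) whose rotation number is again of type bounded by $B$. Since $\cR^n f$ is unicritical, $f$ and $\hat f$ in fact have the same combinatorics at every level; in particular they are topologically conjugate and the normalized conjugacy $\phi_f$ (with $\phi_f(0)=0$) is uniquely determined. Both $\cR^n f$ and $\cR^n\hat f$ are unicritical analytic circle maps with the same bounded--type rotation number, so the renormalization hyperbolicity of \cite{Yam02,Yam03} applies to them: $\cR^n f$ lies on the analytic, codimension--one local stable manifold of a renormalization periodic orbit; the iterates $\cR^m(\cR^n f)$ and $\cR^m(\cR^n\hat f)$ converge to one another at a geometric rate, uniformly for $f$ near $\hat f$; and, by the argument of \cite{Yam02} (the unicritical counterpart of Theorem~\ref{stable manifold}), the normalized smooth conjugacy $\Psi_f$ between the commuting pairs $\cR^n f$ and $\cR^n\hat f$ exists and depends analytically on $\cR^n f$. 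Because $\hat\cR$ is analytic (see \eqref{eq:hatcr}) and the intermediate maps $\cR^j f$, $j<n$, remain bi-cubic and stay in its domain, $f\mapsto\cR^n f$ is analytic, whence $f\mapsto\Psi_f$ is analytic.

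\emph{Step 2 (pulling the conjugacy back).} I would then recover $\phi_f$ from $\Psi_f$ by propagating along the level--$n$ dynamical partition $\mathcal P_n^{f}$, whose atoms are the $q_{n+1}+q_n$ intervals $f^j(I_n^f)$ ($0\le j<q_{n+1}$) and $f^j(I_{n+1}^f)$ ($0\le j<q_n$), the pair $(f^{q_{n+1}}|_{I_n^f},\,f^{q_n}|_{I_{n+1}^f})$ being a rescaling of $\cR^n f$. On $I_n^f\cup I_{n+1}^f$ set $\phi_f$ equal to the corresponding rescaling of $\Psi_f$, and on a general atom $f^j(I_\bullet^f)$ set $\phi_f=\hat f^{\,j}\circ\Psi_f\circ f^{-j}$ (with the appropriate rescalings). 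This is consistent on overlaps since $\Psi_f$ conjugates the pair, it is a homeomorphism of $\T$ conjugating $f$ to $\hat f$ and fixing $0$, hence coincides with the topological conjugacy. As $n$ is fixed there are only finitely many atoms, and on the interior of each one $\phi_f$ is an explicit finite composition of iterates of $f$ and $\hat f$, of the renormalization rescaling maps (which depend analytically on $f$, by the cylinder--renormalization machinery underlying $\Gamma$ and $\cren$), and of $\Psi_f$; thus there the regularity of $\phi_f$ and its analytic dependence on $f$ follow from those of $\Psi_f$.

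\emph{Main obstacle.} The part requiring genuine work is to upgrade this to: $\phi_f$ is a $C^{1+\alpha}$ diffeomorphism and $f\mapsto\phi_f$ is analytic into the Banach space of $C^{1}$ (resp.\ $C^{1+\alpha}$) circle diffeomorphisms fixing $0$. One must control the matching of the pieces across the (analytically $f$--varying) endpoints of $\mathcal P_n^{f}$, in the correct topology, with the one--jets depending analytically on $f$. This is carried out by the mechanism of the rigidity arguments in \cite{ESY} and \cite{Yam02}: the real \emph{a priori} bounds (Theorem~\ref{th:realbds}) give uniformly bounded geometry of the partitions $\mathcal P_n^{f}$ and bounded distortion of $\phi_f$; the complex \emph{a priori} bounds (Theorem~\ref{th:bounds}) together with the geometric convergence $\cR^m(\cR^n f)\to\cR^m(\cR^n\hat f)$ promote the $C^0$--matching to a $C^{1+\alpha}$--matching with Hölder norm and one--jets depending analytically on $f$ through the analytic dependence of the renormalization orbit and of $\Psi_f$ (precomposing with a fixed analytic change of coordinates keeps the domain of $\phi_f$ fixed while the dynamical endpoints move analytically). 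Combining the steps yields the theorem; it is precisely the unicritical counterpart of Theorem~\ref{stable manifold}, transported up $n$ renormalization levels.
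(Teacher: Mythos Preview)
Your proposal is correct and follows the same route as the paper: observe that $\cR^n f$ is unicritical for $f\in\Gamma(S_n)$, then invoke the rigidity and analytic-dependence results of \cite{Yam02}. The paper's own proof is in fact only the sentence preceding the theorem (``Since the $n$-th renormalization of $f\in S_{n}$ is a uni-critical circle mapping, we can apply the results of \cite{Yam02} to conclude''), so you have supplied considerably more of the pull-back mechanism than the authors chose to spell out.
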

Furthermore,
\begin{theorem}
  \label{conj4}
  The local submanifold $S_{n}$ lies in a strong stable manifold of $\hat \cR$. The differential $D\hat \cR$ is contracting in its tangent bundle, and the contraction is uniform (depending only on the bound on the rotation number).

  \end{theorem}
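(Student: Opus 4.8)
The plan is to leverage the fact that the $n$-th renormalization $\cR^n(f)$ of a map $f\in S_n$ is a \emph{unicritical} circle map with a critical point of order $9$, so that the hyperbolicity theory for unicritical circle maps from \cite{Yam02,Yam03,GY} applies directly to the orbit $\{\hat\cR^{k}\bfv\}_{k\ge 0}$ once it has been renormalized deep enough to land in the unicritical setting. Concretely, after a fixed number of steps the image $\hat\cR^{N}(S_n)$ consists of lifts of unicritical cylinder renormalizations, and on that locus $\hat\cR$ is conjugate (via $\Lambda$ and $\Gamma$, which are bi-analytic on the relevant cylinder maps by Propositions~\ref{projection1} and~\ref{analytic dependence2}) to the cylinder renormalization operator $\cren$ acting in a space $\mathbf A_U^\RR$ of unicritical maps. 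The known hyperbolicity result for $\cren$ in the unicritical space gives a strong stable manifold through the (renormalized) base point, and $S_n$ is its pullback under the finitely many renormalization steps; since $\hat\cR$ is analytic and $D\hat\cR$ has bounded norm, pulling back a strong stable manifold produces a strong stable manifold.

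First I would record that the combinatorics of $f\in S_n$ up to level $n$ agree with that of the base point $\hat f\in D_n$, and that $\cR^n(f)$ is unicritical; by the $a$~priori bounds (Theorems~\ref{th:realbds}, \ref{th:bounds}) the orbit stays in a compact family in $\hol(\mu)$ with $\mu=\mu(B)$, so all estimates are uniform in $B$. Next I would invoke the unicritical renormalization hyperbolicity of \cite{Yam02,Yam03}: in a suitable equatorial-annulus Banach manifold, the cylinder renormalization operator for unicritical circle maps of type bounded by $B$ has a hyperbolic periodic orbit with a codimension-one local stable manifold, on which $D\cren$ contracts at a rate depending only on $B$. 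Then I would transport this statement to the space of triples: the lift $\Lambda$ of a unicritical degree-$9$ map is obtained by factoring $\eta=\psi_3\circ\mathbf q\circ\psi_2\circ\mathbf q\circ\psi_1$, and by Remark~\ref{rem:symmetry} and the uniqueness of this factorization, $\Lambda$ and $\Gamma$ are mutually inverse analytic maps between the unicritical cylinder maps and their lifts; hence the contraction rate and the submanifold structure transfer verbatim. Finally I would pull back along $\hat\cR^{N}$: since $\hat\cR$ is analytic with locally bounded derivative and $S_n=(\hat\cR^{N})^{-1}(\text{strong stable manifold of the unicritical operator})$ near $\bfv$, standard graph-transform/invariant-manifold arguments give that $D\hat\cR$ contracts uniformly on $TS_n$, completing the proof.

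The main obstacle I anticipate is the transfer step: I must check that the identification of a unicritical map with its triple lift is genuinely bi-analytic (and not merely continuous) near the relevant orbit, i.e.\ that the factorization $\eta=\psi_3\circ\mathbf q\circ\psi_2\circ\mathbf q\circ\psi_1$ depends analytically on $\eta$ with uniformly controlled domains. This requires that the conformal maps $\psi_i$ onto their images can be normalized so that their domains of definition are uniform (the $\cU=(U_1,U_2,U_3)$ chosen ``appropriately and uniformly'' in the construction of $\Lambda$) and vary holomorphically --- which in turn follows from the compactness of the relevant family in $\hol(\mu)$ together with holomorphic dependence of solutions of the Beltrami/uniformization equations, but needs to be stated carefully. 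A secondary point is to make sure the ``finitely many renormalization steps'' needed to reach the unicritical regime is uniformly bounded over $S_n$ and over the periodic orbits in $\hat\cP_B$; this again follows from the $a$~priori bounds and the locally-constant nature of the renormalization period (Proposition~\ref{renorm-operator}).
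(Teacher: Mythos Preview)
Your proposal is correct and follows essentially the same approach as the paper: reduce to the unicritical theory of \cite{Yam02} after finitely many renormalization steps, then transport back via $\Lambda$ and $\Gamma$. The paper's own proof is much terser (two sentences of citation), and for the uniformity it points specifically to the uniform geometric convergence rate $\lambda(B)$ from \cite{ESY} (Theorem~\ref{theorem:convergencerenor}) rather than just the {\it a priori} bounds you invoke; you may want to cite that convergence result explicitly, since compactness alone gives uniform domains but not directly a uniform contraction rate.
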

The contraction was shown in \cite{Yam02}. Its uniformity is a consequence of uniformity of convergence shown in \cite{ESY}.

We will now demonstrate that the submanifolds $S_n$ locally converge to a codimension one submanifold of $D_*$ at   $\mathbf v_*\in S_*$. In view of the uniformity of contraction in Theorem~\ref{conj4}, it is enough to demonstrate that:
\begin{theorem}
  \label{expdir1} For any $k\in\NN$, there
    exists $\nu\in T_{\mathbf v_*}D_*$ such that
  $$||\cL^k\nu||>2||\nu||.$$
  \end{theorem}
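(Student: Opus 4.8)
The plan is to exhibit an explicit tangent vector $\nu\in T_{\mathbf v_*}D_*$ that is expanded by $\cL=D\hat\cR|_{\mathbf v_*}$, realizing the ``second unstable direction'' described informally right after Theorem~\ref{th:unifhyp} --- namely the direction of ``adding a constant to $\phi_2$'', which moves the relative position of the two critical points while keeping the rotation number fixed. Concretely, I would let $\nu$ be (the projection to $D_*$ of) the infinitesimal deformation $(\phi_1,\phi_2,\phi_3)\mapsto(\phi_1,\phi_2+t,\phi_3)$ corrected by a suitable multiple of the rotation-number direction so that $\nu$ is actually tangent to $D_*$ (this uses Theorem~\ref{stable1}, which tells us $D_*$ has codimension $1$ and is transverse to $\cE$; note $\nu$ need not be inside $\cE$). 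Under the composition $g_{(\phi_i)}=\phi_3\circ\mathbf q\circ\phi_2\circ\mathbf q\circ\phi_1$, adding a constant to $\phi_2$ corresponds, after the cubings, to a nonzero change in $D\Gamma\nu$ concentrated near the image of the first critical point, i.e.\ it changes the signature coordinate $\delta$ of the underlying map while fixing $\rho$. The key point is that the cocycle \eqref{eq:cocycle} acts on signatures by $(\rho,\delta)\mapsto(G(\rho),\|\delta/\rho\|_{\RR/\ZZ})$, whose derivative in the $\delta$-variable is multiplication by $1/\rho>1$; since $\rho_*$ is bounded away from $1$ (type bounded by $B$), iterating $p$ (a period) or $pk$ times gives a definite exponential expansion rate in this coordinate.

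The steps, in order, are: (i) set up the deformation $\nu_0=(0,1,0)\in T_{\mathbf v_*}\bcu$ and compute $D\Gamma\nu_0$, showing it is a nonzero real-analytic vector field along $f_*$ whose ``shape'' is governed by the chain rule through the two cubings --- in particular it is \emph{not} proportional to the rotation vector field $\partial/\partial\theta$ at $0$ and not in $\cE$; (ii) project $\nu_0$ into $T_{\mathbf v_*}D_*$ by subtracting the appropriate multiple of a vector transverse to $D_*$, using that $D_*$ is a smooth submanifold of codimension $1$ by Theorem~\ref{stable1}; (iii) identify $D\Gamma\nu$ with an infinitesimal change of the signature $\cC(\Gamma(\mathbf v))=(\rho,\delta)$ fixing $\rho$ and moving $\delta$ --- this is where I would invoke the machinery behind Remark~\ref{rem:symmetry} and the results of \cite{ESY,Yam2019} relating tangent vectors at attractor points to signature deformations; (iv) push forward by $\cL$ using the semiconjugacy between $\hat\cR$ and the cocycle on signatures \eqref{eq:cocycle}: at the linearized level $\cL$ acts on the $\delta$-component as multiplication by $1/G^{?}(\rho_*)$ at each renormalization step, so $\cL^k$ acts as multiplication by a product of such factors, each $\ge 1/\rho_*>1$; (v) conclude that for $k$ large (in fact any fixed $k$ once we absorb bounded constants from the projection and from comparing the abstract $\delta$-coordinate with $\|\nu\|$), $\|\cL^k\nu\|>2\|\nu\|$. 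Since $\hat\cR$ has period $l=q_m$ and $\mathbf v_*$ has period $p$ under $\hat\cR$, one only needs the estimate for a single well-chosen $k$ (a multiple of the period), and uniformity in $B$ follows because $\rho_*\le [1,1,\dots]=(\sqrt5-1)/2<1$ bounds the expansion from below independently of which periodic point in $\hat\cP_B$ we chose.

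The main obstacle I anticipate is step (iii)/(iv): making rigorous the claim that the abstract Banach-space direction ``add a constant to $\phi_2$'' descends, under $\Gamma$ and under renormalization, to a genuine motion of the signature coordinate $\delta$ with a \emph{controlled, uniformly expanding} derivative. The difficulty is that $D\Gamma\nu$ is an analytic vector field along $f_*$ rather than a number, so one must extract from it a scalar invariant (the derivative of $\delta$) and show $\cL$ acts on that scalar by the cocycle derivative up to bounded multiplicative error --- i.e.\ that the ``non-signature'' part of $D\Gamma\nu$ is absorbed by the strong-stable directions and does not contaminate the expanding estimate. Here I would lean on Theorem~\ref{conj4} (uniform contraction along the $S_n$) and the exponential convergence of renormalizations Theorem~\ref{theorem:convergencerenor}: the portion of $\nu$ tangent to $\bigcup_n S_n$ (equivalently, to the limiting codimension-one subspace of $D_*$) is strongly contracted, so after finitely many renormalizations the surviving part of $\cL^k\nu$ is dominated by its signature-$\delta$ component, which by the cocycle computation has grown by a factor $\ge(1/\rho_*)^{k/l}>2$ for appropriate $k$. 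A secondary technical point is ensuring the correction in step (ii) does not accidentally land $\nu$ in the strong-stable subspace; this is handled by the transversality $T_{\mathbf v}D_*\cap\cE=\emptyset$ together with the fact that the $\phi_2$-constant direction is genuinely independent of both $\cE$ and the $S_n$-directions, which is exactly the non-collapse phenomenon the new function space was designed to exhibit.
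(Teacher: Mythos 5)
Your guiding heuristic --- that the second expanding direction is the $\delta$-coordinate of the signature, expanded by the cocycle \eqref{eq:cocycle} --- is the right picture and matches the paper's informal description after Theorem~\ref{th:unifhyp}. But as a proof it has a genuine gap at exactly the point you flag as the ``main obstacle'', and your proposed repair is circular. First, the reduction of the Banach-space quantity $\|\cL^k\nu\|$ to the scalar ``derivative of $\delta$'' is not available: the signature is defined through the Yoccoz conjugacy to a rigid rotation, and nothing in the paper (or in your argument) establishes that $\mathbf v\mapsto\delta(\Gamma(\mathbf v))$ is a differentiable functional on $\curr$, let alone that $\|\cL^k\nu\|$ is bounded \emph{below} by the displacement of $\delta$ up to uniform constants. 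Second, to absorb the ``non-signature'' part of $\nu$ you invoke contraction along ``the limiting codimension-one subspace of $D_*$'' obtained from the $S_n$; but in the paper the convergence of the $S_n$ to a codimension-one submanifold is deduced \emph{from} Theorem~\ref{expdir1} (together with Theorem~\ref{conj4}), so it cannot be used as an input here. Contraction along each individual $T_{\mathbf v_*}S_n$ does not by itself give a uniform splitting of $T_{\mathbf v_*}D_*$ into a ``signature'' part and a ``stable'' part.

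The paper's proof avoids the signature entirely and works directly with the composition structure \eqref{cyl-map}: after $k$ steps of $\hat\cR$, the middle coordinate of $\hat\cR^k(\mathbf v_*)$ is an almost linear rescaling of a composition of restrictions of the maps $\phi_j$ to intervals of the dynamical partition of level $mk$. One then takes $\nu$ to be a \emph{localized bump} supported near the last such interval $I\in\cP_{mk}$ (not the global constant direction $(0,1,0)$), corrected in the $\nu_3$-coordinate so as to stay in $T_{\mathbf v_*}D_*$; the bump is blown up by the rescaling factor of order $1/|I|$, and bounded distortion in both the dynamical and the parameter space converts this into the operator-norm estimate $\|\cL^k\nu\|\gtrsim a^k\|\nu\|$ with $a>1$. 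Note also that the paper's $\nu$ is chosen \emph{depending on} $k$, whereas you attempt to use a single fixed direction for all $k$. To pursue your route you would need to supply the missing link between the cocycle derivative and the operator norm, which is essentially the same bounded-distortion work the paper performs --- so the localized-bump construction is not an optional shortcut but the substance of the proof.
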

\begin{proof}
  The proof is inspired by the following simple observation. As before, let $l=q_m$ be the period of the cylinder renormalization of $\cren$. Suppose that the second critical point $c$ of $f_*$ lies in the interval $I_j=[0,f_*^{q_j}(0)]$ for $j>m$. Denote $\mathbf v_*=(\phi_1,\phi_2,\phi_3)$ and set
  $$(\phi_1^1,\phi_2^1,\phi_3^1)=\hat\cR(\mathbf v_*).$$
  Then, the diffeomorphism $\phi_2^1$ is obtained by an almost linear rescaling of a restriction of $\phi_2$ to a proper subset of its domain of definition. Hence, a small change in $\phi_2$ will be amplified by renormalization, leading to an expansion in this coordinate.

  In the general case, let us denote
  $$(\phi_1^l,\phi_2^l,\phi_3^l)=\hat\cR^l(\mathbf v_*).$$
  The map $\phi_2^l$ is a rescaling of a composition of restrictions of a sequence of diffeomorphisms $\phi_j$ to intervals of the dynamical partition of level $mk$. Let $\phi_j|_I$ be the final diffeomorphism in this composition ($I\in\cP_{mk}$). Consider a vector field $\nu=(\nu_1,\nu_2,\nu_3)\in T_{\mathbf v_*}D_*$ in which $\nu_j$ is a ``bump'' localized to a small neighborhood of $I$, and a change in $\nu_3$ to keep the rotation number constant. Considerations of bounded distortion both in the dynamical and the parameter space, imply that the norm of $\cL^k\nu$ is bounded below by $a^k$ with $a>1$, and the claim follows.
\end{proof}

As a consequence, the limit of $S_n$ at $\mathbf v_*$ is a codimension one local submanifold $W$ of $D_*$. By Theorem~\ref{conj4}, $W$ lies in the strong stable manifold of $\hat \cR$. We thus have:
\begin{theorem}
  \label{expdir2}
  The linearization $\cL$ has at most two non-stable eigendirections. One of them is repelling, the other one (in $TD_*$) may be either repelling or neutral.
\end{theorem}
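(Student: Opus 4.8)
The plan is to combine the two preceding results, Proposition~\ref{dir1} and Theorem~\ref{expdir1}, with the spectral theory of the compact operator $\cL$ (Proposition~\ref{prop-comp-lin}) and the description of the invariant submanifolds $D_*$ and $W\subset D_*$. First I would recall that, by Proposition~\ref{prop-comp-lin}, $\cL$ is compact, so its spectrum outside any disk of radius $r<1$ is finite, with finite-dimensional (generalized) eigenspaces; the non-stable part of the spectrum spans a finite-dimensional $\cL$-invariant subspace $E^{\mathrm{nst}}$, and $\cL$ is a uniform contraction on a closed complementary invariant subspace. The task reduces to bounding $\dim E^{\mathrm{nst}}\le 2$ and locating the two directions.

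Next I would set up the filtration $E^{\mathrm{ss}}\subset TD_*\subset T_{\bfv_*}\curr$, where $E^{\mathrm{ss}}=T_{\bfv_*}W$ is the tangent space to the strong stable submanifold $W$ (the $C^1$-limit of the $S_n$, by Theorem~\ref{conj4} and the discussion following Theorem~\ref{expdir1}), and $TD_*$ is codimension one in $T_{\bfv_*}\curr$ by Theorem~\ref{stable1}. On $E^{\mathrm{ss}}$ the differential $D\hat\cR=\cL$ is a uniform contraction by Theorem~\ref{conj4}, so $E^{\mathrm{ss}}\cap E^{\mathrm{nst}}=\{0\}$; hence every non-stable eigendirection projects nontrivially into the two-dimensional quotient $T_{\bfv_*}\curr/E^{\mathrm{ss}}$, which is filtered by the line $TD_*/E^{\mathrm{ss}}$. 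This immediately gives $\dim E^{\mathrm{nst}}\le 2$. For the lower bound and the nature of the directions: by Proposition~\ref{dir1} the cone field $\cE$ is forward-$\cL$-expanded, and since $T_{\bfv}D_*\cap\cE=\emptyset$ (Theorem~\ref{stable1}), $\cE$ meets $T_{\bfv_*}\curr\setminus TD_*$, producing one repelling eigendirection \emph{transverse} to $D_*$ — this is the ``$\phi_3$-constant'' (rotation number) direction. By Theorem~\ref{expdir1}, iterating $\cL$ expands some vector $\nu\in TD_*$ by more than a factor $2$ in $k$ steps, for every $k$; since $\cL|_{E^{\mathrm{ss}}}$ is contracting, such $\nu$ cannot lie in $E^{\mathrm{ss}}$, so $\nu$ projects nontrivially to the line $TD_*/E^{\mathrm{ss}}$, forcing a second non-stable eigendirection lying \emph{in} $TD_*$ — the ``$\phi_2$-constant'' direction. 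Thus $\cL$ has exactly two non-stable eigendirections, one transverse to $D_*$ (repelling) and one tangent to $D_*$ (repelling or neutral), which is the assertion of the theorem.

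The main obstacle I anticipate is making the passage from ``$\cL$ expands some vector in $TD_*$ by factor $>2$ in $k$ steps for each $k$'' (Theorem~\ref{expdir1}) to ``$\cL$ has a genuine non-stable eigendirection in $TD_*$'' fully rigorous: a priori the expansion could be spread over a subspace whose individual eigenvalues stay on or inside the unit circle while Jordan blocks or a slowly growing family of directions produce the polynomial-or-better growth. The clean way around this is the compactness of $\cL$: outside the unit disk the spectrum is finite and the corresponding invariant subspace finite-dimensional with $\cL|_{E^{\mathrm{ss}}}$ having spectral radius $<1$, so on the complementary closed invariant subspace $\|\cL^n\|\le C\theta^n$ with $\theta<1$ for the appropriate splitting; hence a sequence of vectors whose $k$-step norm grows like $a^k$ with $a>1$ cannot be eventually absorbed into the contracting part, and its projections to $E^{\mathrm{nst}}$ are bounded away from $0$ — giving a nonzero vector in $E^{\mathrm{nst}}\cap TD_*$. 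I would also need to check the mild point that $E^{\mathrm{nst}}$ is actually spanned by eigenvectors (not merely generalized eigenvectors with Jordan blocks) with eigenvalues of modulus $\ge 1$, which again follows from the two-dimensionality of the quotient and the fact that the only constraints are one strict expansion (transverse) and one expansion-or-neutrality (tangent), ruling out a $2\times 2$ Jordan block; the remaining bookkeeping — that the transverse direction is exactly the rotation-number/$\phi_3$ direction and the tangent one the $\phi_2$ direction — is the informal content already indicated in the paper and requires only identifying the explicit one-parameter deformations $R_\theta\circ\phi_3$ and $\phi_2+\mathrm{const}$ with the respective eigendirections.
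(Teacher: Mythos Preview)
Your proposal is correct and shares the paper's overall architecture: compactness of $\cL$, the filtration $T_{\mathbf v_*}W\subset TD_*\subset T_{\mathbf v_*}\curr$ with $\cL$ contracting on $T_{\mathbf v_*}W$, and Proposition~\ref{dir1} for the repelling direction transverse to $D_*$. The one substantive difference is how the non-stable direction \emph{inside} $TD_*$ is obtained. You invoke Theorem~\ref{expdir1} directly, arguing that $\|\cL^k|_{TD_*}\|>2$ for all $k$ forces the spectral radius of the compact operator $\cL|_{TD_*}$ to be at least $1$; this is clean once you note that $TD_*$ is $\cL$-invariant (renormalization preserves the rotation-number level set on the appropriate periodic iterate), and your worries about Jordan blocks are then unnecessary. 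The paper instead argues geometrically by contradiction: if every direction in $TD_*$ were contracting, the invariant manifold $D_*$ would locally lie in the strong stable manifold $W^{\text{ss}}_{\text{loc}}(\mathbf v_*)\subset S_*$, contradicting $S_*\subsetneq D_*$. The paper's route is shorter and uses Theorem~\ref{expdir1} only once (to produce $W$); yours uses it a second time in place of the inclusion $W^{\text{ss}}_{\text{loc}}\subset S_*$, which is a legitimate alternative.
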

\begin{proof}
  Since $\cL$ contracts in $W$, all eigendirections in $TD_*$ are contracting except at most one. By way of contradiction, assume that the remaining one is also contracting. Note that the local strong stable manifold $W^{\text{ss}}_\text{loc}(\mathbf v_*)\subset S_*$, so this would imply that in a neighborhood of $\mathbf v_*$, the manifold $D_*$ lies in $S_*$ -- which is clearly impossible.

\end{proof}
To complete the proof of  Theorem~\ref{stable manifold}, we show:
\begin{theorem}
  \label{expdir3}
The operator $\cL$ does not have any neutral eignedirections.
  \end{theorem}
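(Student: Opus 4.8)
\textbf{Proof proposal for Theorem~\ref{expdir3}.}

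The plan is to rule out a neutral eigendirection by combining the two unstable directions already identified with a rigidity-type contradiction. By Theorem~\ref{expdir2}, $\cL$ has at most two non-stable eigendirections: one genuinely repelling direction transverse to $D_*$ (coming from the cone field $\cE$, via Proposition~\ref{dir1}), and at most one further direction inside $TD_*$ which is repelling or neutral. So the entire content of the statement is that this second direction is not neutral. First I would suppose, for contradiction, that $\cL$ has an eigenvalue $\lambda$ with $|\lambda|=1$ and eigenvector $\nu\in T_{\bfv_*}D_*$ (the eigenvalue is necessarily real, in fact $\lambda=1$, since $\cL$ is real-symmetric and the nearby invariant manifold $D_*$ is a graph; one can also note $\lambda=-1$ is excluded by passing to $\cL^2$, i.e. doubling the period). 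The strategy is then to show that a neutral direction inside $D_*$ would force the existence of a nontrivial analytic curve of maps through $\bfv_*$, all with the same signature $\cC$ as $f_*$, along which $\hat\cR^{pk}$ neither contracts nor expands — contradicting the exponential convergence of renormalization for maps with equal bounded-type rotation number (Theorem~\ref{theorem:convergencerenor}), together with the fact that $W^{\mathrm{ss}}_{\mathrm{loc}}(\bfv_*)\subset S_*$ already accounts for a codimension-one piece of $D_*$.

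The key steps, in order, would be: (1) Use compactness of $\cL$ (Proposition~\ref{prop-comp-lin}) and Theorem~\ref{expdir2} to produce the finite-dimensional $\cL$-invariant splitting $T\curr = E^{\mathrm{u}}\oplus E^{\mathrm{c}}\oplus E^{\mathrm{s}}$, where $E^{\mathrm{u}}$ is the repelling line transverse to $D_*$, $E^{\mathrm{s}}$ is the part on which $\cL$ is a strict contraction, and $E^{\mathrm{c}}$ is the conjectured neutral line lying in $TD_*$; the strong stable manifold $W$ from Theorem~\ref{expdir2} is tangent to $E^{\mathrm{s}}\cap TD_*$. (2) Apply the analytic center manifold theorem to the analytic operator $\hat\cR^p$ at $\bfv_*$ to obtain a (real-)analytic local center manifold $W^{\mathrm{c}}$ tangent to $E^{\mathrm{c}}$; since $E^{\mathrm{c}}\subset TD_*$ and $D_*$ is itself an analytic invariant submanifold (Theorem~\ref{stable1}), we may take $W^{\mathrm{c}}\subset D_*$, a one-real-dimensional analytic $\hat\cR^p$-invariant curve through $\bfv_*$. (3) Show that every $\bfv\in W^{\mathrm{c}}$ has signature $\cC(\Gamma(\bfv))=\cC$: indeed $\rho(\Gamma(\bfv))=\rho_*$ because $W^{\mathrm{c}}\subset D_*$; and the second signature coordinate $\delta$ is an $\hat\cR^p$-invariant (the cocycle \eqref{eq:cocycle} acts on $(\rho,\delta)$, and along a bounded $\hat\cR^p$-orbit the $\delta$-coordinate, being driven by the expanding cocycle, can only stay bounded if it is the fixed value $\delta_*$), so $\delta$ is constant on the orbit closure of each point of $W^{\mathrm{c}}$, hence — by analyticity of $W^{\mathrm{c}}$ and continuity of $\delta$, using density of periodic orbits — constant on $W^{\mathrm{c}}$. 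Thus $W^{\mathrm{c}}\subset S_*$. (4) But then $W^{\mathrm{c}}$ is a one-dimensional analytic subvariety of $S_*$ transverse to $W^{\mathrm{ss}}_{\mathrm{loc}}(\bfv_*)\subset S_*$; by Theorem~\ref{conj4}/Theorem~\ref{theorem:convergencerenor}, $\hat\cR^{pk}$ converges to $\bfv_*$ at a uniform geometric rate on all of $S_*$, in particular on $W^{\mathrm{c}}$, which contradicts $\|\cL^k|_{E^{\mathrm{c}}}\| = 1$ for all $k$ (a neutral linear direction cannot sit inside a manifold on which the nonlinear map is a uniform contraction). This contradiction shows no neutral eigendirection exists.

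The main obstacle I expect is step (3): rigorously upgrading "$\delta$ is $\hat\cR^p$-invariant along orbits" to "$\delta$ is locally constant on $W^{\mathrm{c}}$", and more fundamentally ensuring that the analytic center manifold can genuinely be chosen inside $D_*$ (so that the $\rho$-coordinate is automatically frozen) rather than merely tangent to $E^{\mathrm{c}}$ at $\bfv_*$. The cleanest route is probably to work from the start with the restricted analytic operator $\hat\cR^p|_{D_*}$ — legitimate since Theorem~\ref{stable1} gives $D_*$ as an analytic submanifold and $\hat\cR$ preserves it — so that $D\hat\cR^p|_{\bfv_*}$ restricted to $TD_*$ has exactly one non-contracting eigendirection $E^{\mathrm{c}}$, and invoke the analytic center manifold theorem directly in $D_*$; then the expanding cocycle on $\delta$ is what is doing the real work, exactly as in the proof of Theorem~\ref{expdir1}, forcing $W^{\mathrm{c}}\subset S_*$ and producing the contradiction with uniform contraction on $S_*$. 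An alternative, if one prefers to avoid center manifolds, is to argue directly: a neutral $\cL$-eigenvector $\nu\in T_{\bfv_*}D_*\setminus T_{\bfv_*}S_*$ would, via the variational formula for $\delta$ under the cocycle \eqref{eq:cocycle}, have a nonzero derivative of $\delta$, hence be pushed out of the neutral regime by one more application of $\hat\cR^p$ exactly as in Theorem~\ref{expdir1}'s bounded-distortion estimate — contradicting neutrality.
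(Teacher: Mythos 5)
Your proposal takes a genuinely different route from the paper, but it contains two gaps that I do not think can be repaired within your framework. The paper's proof does not use center manifolds or the signature cocycle at all: it follows Lyubich's argument from the unimodal setting, assuming a neutral eigenvalue, invoking the Slow Small Orbits Theorem to produce an orbit of $\hat\cR$ confined to an arbitrarily small neighborhood of $\fxpt$ but outside $W^{\text{ss}}(\fxpt)$, showing such an orbit has complex \emph{a posteriori} bounds and hence invariant quasicircles, passing to a limiting bi-infinite tower, and using Tower Rigidity to conclude the orbit was in $W^{\text{ss}}(\fxpt)$ after all. This machinery exists precisely to bridge the gap that your step (4) glosses over.

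Concretely, the two problems are these. First, in step (2) there is no ``analytic center manifold theorem'': for an analytic map with a neutral eigenvalue, center manifolds are in general neither unique nor analytic (not even $C^\infty$), and they are only locally invariant, so orbits may escape; your subsequent appeal to analyticity of $W^{\mathrm c}$ to propagate the constancy of $\delta$ therefore has no foundation. Second, and more fundamentally, the contradiction in step (4) is not valid: Theorem~\ref{theorem:convergencerenor} gives $\dist_C(\cR^n\zeta_1,\cR^n\zeta_2)\le C\lambda^n$ with a constant $C$ that is \emph{not} proportional to the initial distance between the maps, and in the Carath\'eodory topology rather than the Banach norm of $\curr$. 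Geometric convergence of orbits in this sense does not bound $\|\cL^k|_{E^{\mathrm c}}\|$, because the derivative is a limit of difference quotients and the bound $C\lambda^n$ does not scale with the increment (compare $x\mapsto x-x^3$ near $0$, where all orbits converge but the derivative is neutral; here the convergence is geometric but with a non-scaling constant, which is equally inconclusive). Deducing spectral contraction from orbitwise exponential convergence is exactly the hard step of the whole theory, and it is what the Small Orbits plus Tower Rigidity argument accomplishes. Your closing ``alternative'' via the derivative of $\delta$ has the same defect: the cocycle \eqref{eq:cocycle} expands the signature combinatorially, but converting that into expansion of the Banach norm along a \emph{given} neutral eigenvector requires the quantitative bounded-distortion estimates of Theorem~\ref{expdir1}, which only produce \emph{some} expanded vector, not expansion of every vector transverse to $T S_*$.
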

\begin{proof}
The argument given by M.~Lyubich in \cite{Lyu99} in the context of unimodal maps applies {\it mutatis mutandis}. We recap its steps. Assume the contrary. Consider the complexification of the operator $\hat\cR$ which we will denote by the same notation. By Lyubich's Slow Small Orbits Theorem, for every open neighborhood $U=U(\mathbf v_*)$, there exists an orbit $(\mathbf v_n)_{n\in\NN}$ of $\hat \cR$ which is contained entirely in $U\setminus W^{\text{ss}}(\mathbf v_*)$. If $U$ is chosen sufficiently small, the elements of this orbit possess complex {\it a posteriori} bounds. This implies that for each $n$, the complex cylinder map $f_n=\Gamma(\mathbf v_n)$ has an invariant quasicircle. Passing to a limit, we obtain a bi-infinite tower with complex bounds, corresponding to the limit point of $f_n$. By Tower Rigidity Theorem, such a tower is unique, and its base map is $\mathbf v_*$. This means that $\mathbf v_n\in W^{\text{ss}}(\mathbf v_*)$, which is the desired contradiction.

  \end{proof}
\ignore{
We will now construct the second expanding direction for $\hat\cR$. To do this, we will consider two cases, based on the following definition:

\begin{definition}
Let us say that the combinatorial type $\cC$ is {\it separated to depth $k$} if the following is true. Denoting $g=\cren^k f_*$, we have the two critical points of $g$ separated by intervals of the dynamical partition of $g$ of level $2$.
  \end{definition}

\medskip
\noindent{\sl Case 1}: the type $\cC$ is separated to depth $k$. Let $n>k$.

For a bi-cubic map $f$, denote $c_f$ the non-zero critical point.

Let us define the cone field $\cE_2$ in the tangent bundle  $TD_*$  to consist of vector fields $\nu\in T_{\mathbf v}D_*$ such that, setting $\upsilon=D\Gamma\nu$ and $f=\Gamma(\mathbf v)$, we have:
$$\upsilon(c_f)=\upsilon'(c_f)=0\text{ and }\upsilon''(c_f)>0.$$
This cone field is clearly open and non-empty at every point $\mathbf v$.

We then have:
\begin{proposition}
  \label{dir2}
  There exist constants $a_2>0$ and $\lambda_2>1$ such that the following holds. Let $m$ be as in the definition of cylinder renormalization. Suppose, $n>km$, $\mathbf v\in S_n$ and $f=\Gamma(\mathbf v)$. 
  Let $\nu\in  \cE_2$. Set $\upsilon=D\Gamma\nu$ and $\upsilon_j=D\Gamma\cL^j(\nu)$ for $j\in\NN$. Then, 
  $$|(\upsilon_{q_{km}})''(c_{\cren^k f})|>a_2\lambda_2^{3k}|\upsilon''(c_{f})|.$$
  \end{proposition}
\begin{proof}
  The composition $f\mapsto f^j$ transforms the vector field $\upsilon$ into $\hat u_j$.
  An easy induction shows that
  $$\hat u_j''(c_{f})=\frac{df^{j-1}}{dx}(f(c_{f}))\upsilon''(c_{f}).$$
  Denote $j=q_{km}$ and $\ell_j=|f^j(0)|$.
  Then
   $$\frac{df^{j-1}}{dx}(f(c_f))\sim \ell_j^{-2},$$
so, by real {\it a priori} bounds,
  there exist $\lambda_2>1$ and $b_1>0$ such that
  $$\frac{df^{j-1}}{dx}(f(c_f))>b_1\cdot\lambda_2^{2k}.$$
  Let $\Phi^f_k$ stand for the uniformizing coordinate of the fundamental crescent of the $k$-th cylinder renormalization of $f$ in a neighborhood of
  $f$. We see (setting all terms containing lower derivatives of $\upsilon_{q_k}$ to zero, and using bounded distortion considerations for $\Phi^f_k$ both in the dynamical and the parameter spaces) that the leading term in the expression for 
  $\upsilon_{q_{km}}''(c_{\cren^k f})$ is obtained by multiplying $\hat u_{q_{km}}''(c_{f})$ by the first derivatives
  of $\Phi^f_k$, which is again of the order $\ell_j\sim \lambda_2^k$.
  The claim follows.

  \end{proof}

\medskip
\noindent
    {\sl Case 2}: the critical points of $f$ are not separated to depth $k$.
Note then, that the critical points of $g=\cren^if$ with $i=[k/2]$ are not separated to depth $j=O(k)$.  
In view of this, let us assume (replacing $f$ with its renormalization) that $c_f$ lies in $[f^{q_{mj}}(0),f^{q_{mj+1}}(0)]$. 
Let us 
consider the cone field
$$\cE_3=\{\nu=(v^1,v^2,v^3)\text{ with }v^2(0)>0\}.$$
We have:
\begin{proposition}
  \label{dir2}
  There exists a  constant $\lambda_2>1$ such that the following holds.
  Let $\nu\in T_{\bfv}D_*\cap \cE_3$, and denote $$\nu_i=\cL^i(\nu)=(v_i^1,v_i^2,v_i^3).$$
  Then, for $j$ sufficiently large, we have 
$$v_j^2(0)>\lambda^j_2 v_2(0).$$
\end{proposition}

\begin{proof}Let us consider a one-parameter family
  $$\fxpt^t=(\phi^1_t,\phi^2_t,\phi^3_t)=\fxpt+t\nu+o(t), \text{ and let }\hat\cR^j\fxpt^t=(\psi^1_t,\psi^2_t,\psi^3_t).$$
  In this case,
  $$\psi^2_t(0)\sim\frac{1}{\delta^{j}}\phi^2_t(0),$$
  where the scaling factor
  $\delta\in(0,a)$  with $a\in(0,1)$ by real {\it a priori} bounds.
  We thus have the first-order estimate required.
  \end{proof}



\begin{proof}[Proof of Theorem~\ref{stable manifold}]
By Proposition \ref{propconj2}, applying the same arguments as in the proof of Theorem~\ref{stable1}, replacing $D_k\to D_*$ with $S_k\to S_*$, and the cone field $\cE$ with either $\cE_1$ or $\cE_2$, 
we obtain that $S_*$ is a submanifold of $D_*$ of codimension $1$ (and of $\curr$ of codimension $2$).
Furthermore, the operator $\cL$ has an unstable direction in $TD_*$ and hence, two unstable directions in $T\curr$. 
By the definition of $S_*$,
it satisfies the statement of Theorem~\ref{stable manifold}. 
\end{proof}
  }

Since periodic orbits are dense in $\hat{\mathcal A}_B$, we can use the standard considerations of uniform hyperbolicity to derive the main
result. We can use, for instance,  Lemma VII.1 of \cite{Man83} which discusses uniform hyperbolicity in Banach space setting:

\begin{lemma}\cite[Lemma VII.1, page 564]{Man83}\label{ManeLemma}
  Let $(E,d)$ be a Banach space, $U\subseteq E$ an open set, $f: U \to E$ a $C^{k+\gamma}$ map and $K \subseteq U$ satisfying the following properties:
  \begin{itemize}
   \item [a)] $f^{n}(K)\subseteq U$, for all $n \geq 0$,
   \item [b)] For all $n\geq0$ and $x \in E$ there exists a splitting $E= H^{(n)}_{x} \oplus G_{x}^{(n)}$ such that:
   \begin{itemize}
    \item [$\rm b_1$)] $H^{(0)}_{x}$ and $G_{x}^{(0)}$ depend continuously on $x$.
    \item [$\rm b_2$)] For all $n\geq 0$ and $x \in f^{n}(K)$
    \[
      (D_xf)H_x^{(n)}=H_{f(x)}^{(n+1)} \ \text{and}  \
      (D_xf)G_x^{(n)} \subset G_{f(x)}^{(n+l)}.
    \]
    \item [$\rm b_3$)] $(D_xf)|_{H_x^{(n)}}$  is injective for all $n\geq 0$ and $x \in f^{n}(K)$.  
    \item [$\rm b_4$)] There exist $C>0$, $\mu>0$ and $0< \varepsilon < \gamma \mu/3$ satisfying
    \[
      \|D_{f^{n}(x)}f^{m}|_{G_{f^{n}(x)}^{(n)}} \| \leq C \, e^{(\varepsilon n - \mu m)}
      \]
      for all $x \in K$, $n\geq 0$, $m\geq0$. Moreover, if $n-m\geq 0$ then
      \[
       \| ( D_{f^{n-m}(x)}f^{m}|_{H_{f^{n-m}(x)}^{(n-m)}})^{-1} \| \leq C \, e^{\varepsilon n -m(\mu-3\varepsilon)}.
    \]    
    \item [$\rm b_5$)] If $\Pi_x^{(n)}: E \to G_x^{(n)}$ is the projection associated to the splitting $E = H_x^{(n)} \oplus G_x^{(n)}$ then for all $x \in K$ and $n \geq 0$,
    \[
      \|\Pi_{f^{n}(x)}^{(n)} \| \leq C\, e^{\varepsilon n}.
    \]
   \end{itemize}
 \item [c)] There exists $A>0$ such that for all $n\geq0$, \ $d(f^{n}(K), E \setminus U) \geq A\, e^{(\mu+\varepsilon) n}$.
 \end{itemize}
Then there exist $r>0$ and $\delta>0$ arbitrarily small such that if $$\Delta_r= \{(x,v): x\in K, v  \in G_x^{(0)}, \|v\| < r\},$$ there exists a map  $\varphi: \Delta_r \to E$ satisfying the following properties:
\begin{enumerate}
 \item [I)] $\Pi_x^{(0)} \phi(x,v)=v$ for all $(x,v) \in  \Delta_r$.
 \item [II)] For all $(x,v) \in \Delta _r$, $\phi(x,v)$ is the unique vector in $E$ with $\Pi_x^{(0)}\varphi(x,v)=v$ and
 \[
  \|f^{n}(\varphi(x,v)) - f^{n}(x) \| \leq \delta \,e^{-n \mu} 
 \]
for all  $n \geq 0$. In particular, this implies $\varphi(x,0)=x$.
\item [III)] $\varphi$ is continuous, for every $x$ the map $\varphi(x,\cdot)$ is $C^{k+\gamma}$ and its derivatives depend continuously on $(x,v)$. Moreover, its first derivative at $(x,0)$ is the inclusion map.
\end{enumerate}
\end{lemma}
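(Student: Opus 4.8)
The plan is to prove this by a Lyapunov--Perron (variation of constants) fixed point argument, adapted to the non-autonomous, non-uniformly hyperbolic Banach setting. Fix $x\in K$ and abbreviate $x_n=f^n(x)$; by (a) the orbit stays in $U$, and by (c) its distance to $E\setminus U$ is at least $Ae^{(\mu+\varepsilon)n}$, so along the orbit $f$ is uniformly $C^{k+\gamma}$ and, once $\delta$ is small, every ball $B(x_n,\delta e^{-\mu n})$ lies in $U$. Writing a candidate stable orbit through a nearby point as $z_n=x_n+w_n$ and decomposing $w_n=w_n^H\oplus w_n^G$ along $E=H_{x_n}^{(n)}\oplus G_{x_n}^{(n)}$, the relation $z_{n+1}=f(z_n)$ is equivalent to
\[
w_{n+1}=(D_{x_n}f)\,w_n+N_n(w_n),\qquad N_n(w)=f(x_n+w)-f(x_n)-(D_{x_n}f)w,
\]
where $C^{k+\gamma}$ regularity and (c) give $N_n(0)=DN_n(0)=0$ and a Lipschitz constant of $N_n$ on $B(0,\delta e^{-\mu n})$ bounded by $O(\delta^\gamma e^{-\gamma\mu n})$ --- in particular decaying in $n$.

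The heart of the argument is the Perron operator. By $(b_2)$--$(b_3)$ the derivative cocycle along $(x_n)$ restricts to a forward-contracting operator on the $G$-bundle and to an invertible, backward-contracting operator on the $H$-bundle, with the quantitative rates of $(b_4)$ and the sub-exponential bound $\|\Pi_{x_n}^{(n)}\|\le Ce^{\varepsilon n}$ of $(b_5)$. On a suitably weighted Banach space of sequences $w=(w_n)_{n\ge0}$ --- morally $\|w\|_\star=\sup_n e^{\mu n}\|w_n\|$, but with the exponent slightly lowered so as to reabsorb the $e^{\varepsilon n}$ losses --- I would define, for $v\in G_x^{(0)}$,
\[
(\mathcal T_{x,v}w)_n=\Phi^G_{n\leftarrow 0}\,v+\sum_{j=0}^{n-1}\Phi^G_{n\leftarrow j+1}\,\Pi^{(j+1)}N_j(w_j)\;-\;\sum_{j=n}^{\infty}\Phi^H_{n\leftarrow j+1}\,(I-\Pi^{(j+1)})N_j(w_j),
\]
where $\Phi^G_{\cdot\leftarrow\cdot}$ and $\Phi^H_{\cdot\leftarrow\cdot}$ are the (forward, resp. inverse) cocycles on the two bundles: the $G$-component is propagated forward from the prescribed datum $v$, while the $H$-component is propagated backward from $+\infty$, and it is this backward branch --- using the convergent inverse cocycle from $(b_4)$ --- that forces $w_n^H\to0$. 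Using the gap $\mu-3\varepsilon>0$ (this is precisely what the exponent in $(b_4)$ buys) together with the decay of the Lipschitz constants of $N_j$, both series converge, $\mathcal T_{x,v}$ maps the $r$-ball of the weighted space into itself, and it is a $\tfrac12$-contraction there once $r$ and $\delta$ are small (the contraction factor being $O(\delta^\gamma)$). The unique fixed point $w(x,v)$ defines $\varphi(x,v)=x+w(x,v)_0$: property (I) holds because the $G$-component of $w(x,v)_0$ is $v$ by construction, and property (II) --- the characterization via $\|f^n(\varphi(x,v))-f^n(x)\|\le\delta e^{-n\mu}$, with uniqueness --- is exactly the statement that solutions of the orbit equation lying in the weighted space are precisely the fixed points of $\mathcal T_{x,v}$.

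For the regularity statement (III) I would run the fibre-contraction / $C^{k+\gamma}$ section theorem: $\mathcal T_{x,v}$ is $C^{k+\gamma}$ in $(v,w)$ with a contraction rate independent of $(x,v)$, so its fixed point depends in a $C^{k+\gamma}$ fashion on $v$ with derivatives continuous in $v$; continuity in $x$ follows from $(b_1)$ (continuity of the level-$0$ splitting) together with the intrinsic characterization in (II); and differentiating the fixed point identity at $(x,0)$, where $w\equiv0$ and $DN_j(0)=0$, shows $D_v\varphi(x,0)$ is the inclusion $G_x^{(0)}\hookrightarrow E$. The hypothesis $\varepsilon<\gamma\mu/3$ is exactly the ``fibre contraction beats base expansion'' threshold needed to carry this step out at the $C^{k+\gamma}$ level. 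I expect the principal obstacle to be the bookkeeping forced by the \emph{non-uniformity}: one must choose the exponential weight so that the accumulated $e^{\varepsilon n}$ factors of $(b_4)$--$(b_5)$ are absorbed, verify that the \emph{backward} sum over the $H$-bundle --- the genuinely new feature relative to the classical autonomous Hadamard--Perron theorem --- converges in that weight, and keep the contraction constant strictly below $1$ uniformly over $x\in K$ and over all levels $n$; condition (c) is used precisely to guarantee that the orbit together with the $\delta$-tube around it, on which $N_n$ must be controlled, never escapes the domain $U$ where $f$ is $C^{k+\gamma}$.
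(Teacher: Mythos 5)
This lemma is quoted verbatim from Ma\~n\'e \cite{Man83} and the paper gives no proof of it, so there is nothing in the text to compare your argument against. Your Lyapunov--Perron (Irwin sequence-space) sketch --- forward propagation on the contracting $G$-bundle, backward summation of the inverse cocycle on the $H$-bundle, a $\delta^\gamma$-contraction on a weighted sequence space with the weight chosen to absorb the $e^{\varepsilon n}$ losses from $(\mathrm{b}_4)$--$(\mathrm{b}_5)$, and the fibre-contraction theorem for the $C^{k+\gamma}$ regularity --- is the standard proof of non-uniform stable manifold lemmas of this type and is essentially the argument of the cited source, so I consider it correct in outline.
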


We conclude:
\begin{theorem}
  \label{mainthm-1}
For every $B\in\NN$, the attractor $\hat{\mathcal A}_B$ is uniformly hyperbolic in $\curr$ with two unstable directions. It has a codimension two stable foliation by analytic manifolds. If $\bfv_1$ and $\bfv_2$ belong to the same stable leaf of the foliation then $\cC(\bfv_1)=\cC(\bfv_2)$. In a neighborhood of $\hat{\mathcal A}_B$, the converse is also true.
\end{theorem}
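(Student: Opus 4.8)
The strategy is to bootstrap from the hyperbolicity of periodic orbits (Theorem~\ref{th:unifhyp}) to the whole attractor $\hat{\mathcal A}_B$ via the Banach-space hyperbolicity criterion of Ma\~n\'e (Lemma~\ref{ManeLemma}). First I would verify the hypotheses of Lemma~\ref{ManeLemma} with $f=\hat\cR$, $U=\cV_B$, and $K=\hat{\mathcal A}_B$. Property (a) is immediate from $\hat\cR$-invariance of $\hat{\mathcal A}_B$ (inherited from $\cR(\cA_B)=\cA_B$ in Theorem~\ref{th:attractor}). For the splitting in (b), I would take $G_x^{(n)}$ to be the codimension-two ``stable'' subspace and $H_x^{(n)}$ the two-dimensional ``unstable'' subspace at $\hat\cR^n(x)$. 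At periodic points these are the hyperbolic splittings of Theorem~\ref{th:unifhyp}; at a general point $x\in\hat{\mathcal A}_B$ one uses density of $\hat\cP_B$ (Proposition~\ref{rem:per-dense}) together with compactness of $\cL$-type linearizations (Proposition~\ref{prop-comp-lin}) to obtain the splitting as a limit, and continuity $\rm b_1)$. The unstable bundle $H_x$ is spanned by the two expanding cone directions: the direction inside $\cE$ supplied by Proposition~\ref{dir1}, and the second expanding direction in $TD_*$ from Theorem~\ref{expdir1}. The contraction estimates $\rm b_4)$--$\rm b_5)$, with the exponents $\mu,\eps$ uniform in $B$, come from combining the uniform contraction in $S_*$ (Theorems~\ref{conj4} and \ref{expdir3}, which rules out neutral directions) with the uniform expansion estimates; the distance-to-boundary condition (c) follows because all maps in $\hat{\mathcal A}_B$ lie in $\hol(\mu(B))$ and the renormalization domains are uniformly deep inside their targets (Proposition~\ref{prop:cren}).

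Once Lemma~\ref{ManeLemma} applies, its conclusions (I)--(III) give, through each point of $\hat{\mathcal A}_B$, a local stable manifold $W^{\mathrm{ss}}_{\mathrm{loc}}$ of codimension two, varying continuously, each leaf an analytic submanifold (analyticity being upgraded from the $C^{k+\gamma}$ conclusion of the lemma using that $\hat\cR$ is complex-analytic and the standard graph-transform fixed-point argument in the complexification, exactly as in \cite{Yam02,Yam03}). Assembling these leaves gives the codimension-two stable foliation. Uniform hyperbolicity of the whole attractor then follows from the uniform cone estimates: the two-dimensional unstable bundle is $H_x$, the contraction on $G_x$ is uniform by $\rm b_4)$, and the angle between $H_x$ and $G_x$ is uniformly bounded below by $\rm b_5)$ (this is precisely the ``uniformity'' that \cite{Yam2019} lacked, recovered here because in the triples space the two unstable cones $\{v^2(0)>0\}$ and $\{v^3\text{-constant}\}$ stay transverse even as the critical points collide).

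For the combinatorial characterization of leaves: if $\bfv_1,\bfv_2$ lie on the same stable leaf then $\hat\cR^{n}\bfv_i$ converge to a common point, hence by Theorem~\ref{theorem:convergencerenor} (exponential convergence of renormalizations) the underlying commuting pairs have the same rotation number, and tracking the signature cocycle \eqref{eq:cocycle} backwards — it is invertible on the attractor — forces $\cC(\Gamma(\bfv_1))=\cC(\Gamma(\bfv_2))$; alternatively, on the periodic leaves through $\fxpt$ this is exactly the content $S_*\subset D_*$ of Theorem~\ref{stable manifold}, and it passes to the foliation by continuity and density. For the converse near $\hat{\mathcal A}_B$: if $\cC(\bfv_1)=\cC(\bfv_2)$ then by the $C^{1+\alpha}$ rigidity of \cite{ESY} the maps $\Gamma(\bfv_i)$ are smoothly conjugate, so $\mathrm{dist}_C(\hat\cR^n\bfv_1,\hat\cR^n\bfv_2)\to0$; a map whose forward orbit stays near $\hat{\mathcal A}_B$ and is asymptotic to another such orbit must lie on the latter's stable leaf, by local uniqueness of the leaf through a point (conclusion (II) of Lemma~\ref{ManeLemma}).

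The main obstacle I anticipate is verifying hypothesis $\rm b_4)$--$\rm b_5)$ with exponents \emph{uniform in} $B$ and, relatedly, producing the splitting $H_x\oplus G_x$ continuously over the \emph{non-periodic} part of $\hat{\mathcal A}_B$ with uniformly bounded projection norm: one must show the two-dimensional unstable space does not degenerate, i.e. that the two expanding directions of Propositions~\ref{dir1} and Theorem~\ref{expdir1} stay uniformly independent along every orbit, including orbits accumulating on the unicritical locus. This is handled by working in the triples space $\curr$ rather than $\mathbf A_U$: there the $\phi_2$-coordinate and the $\phi_3$-coordinate are genuinely separate Banach factors, so the two unstable perturbations live in transverse factors by construction, and the bounded-distortion amplification estimates (dynamical and parameter-space) in the proofs of Theorem~\ref{expdir1} and Proposition~\ref{dir1} give expansion rates depending only on the real \emph{a priori} bounds, hence only on $B$ — which is exactly why the collapse of \cite{Yam2019} is avoided.
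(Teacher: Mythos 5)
Your proposal is correct and follows essentially the same route as the paper: uniform hyperbolicity at the dense set of periodic orbits (Theorem~\ref{th:unifhyp}), Ma\~n\'e's Banach-space stable-manifold criterion (Lemma~\ref{ManeLemma}), and passage to limits along $\hat\cP_B$ to obtain the codimension-two analytic stable foliation through all of $\hat{\mathcal A}_B$. The only cosmetic differences are that the paper applies Lemma~\ref{ManeLemma} with $K=\hat\cP_B$ and then takes limits of the resulting leaves, whereas you take $K=\hat{\mathcal A}_B$ after first extending the splitting by limits, and that your argument for the combinatorial characterization of the leaves is spelled out where the paper appeals to ``trivial considerations of continuity.''
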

\begin{proof}

  Using Lemma~\ref{ManeLemma} with $K=\hat\cP_B$ we see that if $\mathbf v\in \cP_B$ then there exists an open neighborhood $U(\mathbf v)\subset\curr$ and  $r>0$ such that for every $\mathbf w\in \cP_B\cap U(\mathbf v)$, the stable manifold $W^s_{\text{loc}}(\mathbf w)$ is an analytic graph of codimension two over
  the ball $\Delta$ of radius $r$ around the origin in $T_{\mathbf v}(W^s(\mathbf v))$.
  Since $\hat\cP_B$ is dense in $\hat \cA_B$, passing to converging subsequences, we obtain a codimension two foliation $\cF^s_B$ with analytic leaves through $\hat\cA_B$ which is the closure of the stable foliation of $\hat\cP_B$. Trivial considerations of continuity imply that it has the desired properties.
  \end{proof}

\end{document}